\renewcommand{\t}{1}
\newcommand{\f}{0}
\renewcommand{\i}{\sfrac{1}{2}}
\newcommand{\X}{\mathbb{X}}
\newcommand{\Y}{\mathbb{Y}}
\newcommand{\Z}{\mathbb{Z}}
\newcommand{\CL}{\mathbb{CL}}
\theoremstyle{plain}
\newtheorem{theorem}{Theorem}[section]
\newtheorem{proposition}[theorem]{Proposition}
\newtheorem{lemma}[theorem]{Lemma}
\theoremstyle{definition}
\newtheorem{Definition}[theorem]{Definition}
\newtheorem{fact}[theorem]{Fact}
\newtheorem{corollary}[theorem]{Corollary}
\begin{document}
\sloppy

\title{Reaching Classicality through Transitive Closure}

\author{
\name{Quentin Blomet\textsuperscript{a,b} and Bruno Da R{\'e}\textsuperscript{c,d}}
\affil{\textsuperscript{a}Institut Jean-Nicod (CNRS, ENS-PSL, EHESS), PSL University, Paris, France; 
\textsuperscript{b}Department of Philosophy, University of Greifswald, Germany;\\
\textsuperscript{c}IIF (CONICET-SADAF), Buenos Aires, Argentina;\\
\textsuperscript{d}Department of Philosophy, University of Buenos Aires, Argentina}}

\maketitle

\begin{abstract}
Recently, \cite{da2023three} showed that all 
logics based on \textit{Boolean Normal monotonic three-valued schemes} coincide with classical logic when defined using a \textit{strict-tolerant} standard ($\mathbf{st}$). Conversely, they proved that under a \textit{tolerant-strict} standard ($\mathbf{ts}$), the resulting logics are all empty. 
Building on these results, we show that classical logic can be obtained by closing under transitivity the union of two logics defined over (potentially different) Boolean normal monotonic schemes, using a strict-strict standard 
($\mathbf{ss}$) for one and a tolerant-tolerant standard ($\mathbf{tt}$) for the other, with the first of these logics being paracomplete and the other being paraconsistent. We then identify a notion dual to transitivity that allows us to characterize the logic $\mathsf{TS}$ as the dual transitive closure of the intersection of any two logics defined over (potentially different) Boolean normal monotonic schemes, using an $\mathbf{ss}$ standard for one and a $\mathbf{tt}$ standard for the other.
Finally, we expand on the abstract relations between the transitive closure and dual transitive closure operations, showing that they give rise to lattice operations that precisely capture how the logics discussed relate to one another.
\end{abstract}

\begin{keywords}
Three-valued logics; Kleene logics; Transitive closure; Non-classical logics; Strict-tolerant logic; Logic of paradox; Strong Kleene logic; Weak Kleene logic
\end{keywords}

\section{Introduction}
Are there three-valued presentations of classical logic? \cite{Cobrerosetal2012} answer this question affirmatively by showing that certain three-valued logics based on the Strong Kleene scheme validate precisely the same inferences as classical logic. These results were later on extended to the Weak Kleene counterparts of these logics by \cite{szmucferguson2021} and \cite{ferguson2023monstrous}. Ultimately, \cite{da2023three} identified all the three-valued truth-functional logics corresponding to classical logic. In particular, they identified a subclass of these logics, characterized by a strict-tolerant ($\mathbf{st}$) standard of argument evaluation, defined over a \textit{Boolean normal monotonic scheme} for the connectives (BNM). One such logic is the strict-tolerant logic $\mathsf{ST}$ \citep{Cobrerosetal2012, ripley2012conservatively, Cobrerosetal2013}, based on the aforementioned Strong Kleene scheme. They further proved that logics defined with a standard among $\mathbf{ss}$, $\mathbf{tt}$, $\mathbf{ts}$ and $\mathbf{ss} \cap \mathbf{tt}$ over a BNM scheme are necessarily weaker than classical logic. The Strong Kleene logic $\mathsf{K}_3$ \citep{Kleene1952-KLEITM}, the logic of paradox $\mathsf{LP}$ \citep{asenjo1966, priest1979}, the tolerant-strict logic $\mathsf{TS}$ \citep{Cobrerosetal2012}, and the logic of order $\mathsf{KO}$ \citep{Makinson1973} are notable examples of such logics, all based on the Strong Kleene scheme. Other such logics include those based on the Middle Kleene scheme \citep{Peters1979, BeaverKrahmer2001, George2014}, and the logics based on the Weak Kleene scheme, such as \cite{Bochvar1938}'s logic $\mathsf{K}_3^w$ and \cite{Hallden1949}'s Paraconsistent Weak Kleene logic $\mathsf{PWK}$.

Nonetheless, the authors do not investigate the properties of logics defined by the union of the $\mathbf{ss}$ and $\mathbf{tt}$ standards. It is known, for instance, that the union of $\mathsf{K}_3$ and $\mathsf{LP}$ does not align with classical logic (see e.g.\ \citealp[p.511]{wintein2016all}) or equivalently $\mathsf{ST}$. The relationship between $\mathsf{K}_3$ and $\mathsf{LP}$ to classical logic has been studied by \cite{blomet2024sttsproductsum}, where it is shown that the set of valid inferences of $\mathsf{ST}$ is the relational composition of the sets of $\mathsf{K}_3$- and $\mathsf{LP}$-valid inferences. However, the authors do not examine the relationship between other logics based on the $\textbf{ss}$ and $\textbf{tt}$ standards and classical logic.

This paper first extends the aforementioned inequality result, showing that
no logic defined by the union of the $\mathbf{ss}$ and $\mathbf{tt}$ standards and a BNM scheme coincides with classical logic. This naturally raises the question: what is absent from this union to recover classical logic? We will address this by showing that what is missing is \textit{transitivity}. Once closed under transitivity, the union of an $\mathbf{ss}$- and a $\mathbf{tt}$-logic is classical logic.

Just as the union of an $\mathbf{ss}$- and a $\mathbf{tt}$-logic based on a BNM scheme fails to be classical logic, the intersection of an $\mathbf{ss}$- and a $\mathbf{tt}$-logic does not coincide with the aforementioned non-reflexive logic $\mathsf{TS}$. \cite{da2023three} show in particular that any logic with a $\mathbf{ts}$ standard defined over a BNM scheme collapses with $\mathsf{TS}$. In contrast, the intersection of an $\mathbf{ss}$- and a $\mathbf{tt}$-logic is reflexive, as exemplified by the logic $\mathsf{KO}$. We are thus justified in asking a similar question to the one posed for the union: what is missing from the intersection to yield the logic $\mathsf{TS}$? Once again, we will answer this by showing that what is missing is a relational property---specifically the property of \textit{dual transitivity}.

We will then explore the relationship between the transitive closure operator and its dual, as well as their effects when applied to sets of validities. This analysis will ultimately enable us to present a cohesive overview of the interrelations among all the logics discussed, showing that their sets of valid inferences form a sublattice of the lattice of logics satisfying the Tarskian properties of Reflexivity, Monotonicity, Transitivity, and Structurality. We further show that their sets of anti-theorems and theorems---defined here as inferences with unsatisfiable premises and inferences with an unfalsifiable conclusion---form a lattice induced by the dual transitive closure operator.

The paper is structured as follows. In Section \ref{sect:def}, we provide all the preliminary technicalities required for the paper. In Section \ref{sct:TrDef}, we introduce the operations of \textit{transitive closure} and \textit{dual transitive closure}, and present their main properties. In Section \ref{c} we prove that the transitive closure of the union of two $\mathbf{ss}$- and $\mathbf{tt}$-logics coincides with an $\mathbf{st}$-logic (that is, with classical logic), based on possibly different BNM schemes. We then dualize this result and prove that applying the dual transitive closure to the intersection of an $\mathbf{ss}$- and a $\mathbf{tt}$-logic yields a $\mathbf{ts}$-logic (i.e., the empty logic), possibly based on different BNM schemes. In Section \ref{sec:aditional}, we elaborate on some remarks regarding the dual transitive closure operator. In Section \ref{sec:lattice}, we provide an overview of the interrelations among all the logics discussed. Finally, in Section \ref{sect:conclusion}, we conclude with our final observations and remarks.

\section{Preliminary definitions}\label{sect:def}

\begin{Definition}[Language]\label{language2b}
The propositional language $\mathcal{L}$ is built from a denumerably infinite set $\textit{Var} = \lbrace p, p', \ldots\rbrace$ of propositional variables, using the logical constants negation ($\neg$), disjunction ($\lor$) and conjunction ($\wedge$).
\end{Definition}

\noindent
Elements of $\mathcal{L}$ will be denoted by lower-case Greek letters or upper-case Latin letters, depending on the context. Subsets of $\mathcal{L}$ will be denoted by upper-case Greek letters. Given a function $f: \mathit{Var} \longrightarrow \mathcal{L}$, $f$ extends to a \textit{substitution} $\sigma$ on $\mathcal{L}$ by letting $\sigma (\star (\phi_1 \ldots \phi_n)) = \star (\sigma (\phi_1) \ldots \sigma (\phi_n))$ for all logical constants $\star$. Given a formula $\phi$, $\textup{At}(\phi)$ denotes the set of propositional variables of $\phi$. By extension, given a set of formulas $\Gamma$, $\textup{At}(\Gamma) \coloneqq \bigcup \lbrace \textup{At}(\gamma) : \gamma \in \Gamma \rbrace$.

\begin{Definition} 
An \textit{inference} is an ordered pair $\langle \Gamma,\phi \rangle $, denoted by $\Gamma \Rightarrow \phi$, where $\phi \in \mathcal{L}$ and $\Gamma \subseteq \mathcal{L}$ is finite.
\end{Definition}

Before proceeding, a few remarks on the previous definition are in order. First, note that inferences are defined with a finite set of premises. However, this does not compromise the generality of the results in this paper. All the logics discussed here are finitely-valued, and therefore finitary (see \citealp{Wojcicki1988} 4.1.7). Consequently, they are entirely determined by their set of inferences with finitely many premises. Second, we choose to work with single conclusions, as this framework allows for simpler and more elegant definitions of the operations introduced in the next section.\footnote{Note that the authors of \cite{da2023three} adopt a multiple-conclusion setting. However, none of their results fundamentally depend on this choice, as everything in that paper could be adapted to a single-conclusion framework. We intend to address the multiple-conclusion case in future work.} 

Blackboard letters
$\mathbb{L}$ are used to denote sets of inferences. Logics $\mathsf{L}$ are occasionally identified with sets
of inferences $\mathbb{L}$.\footnote{Although extensionally identical, we distinguish between these two objects. This distinction is necessary since we sometimes refer to logical properties (such as the validity of an inference), while at other times, we consider set-theoretic properties (such as the union of two sets of inferences).} Such an identification is more liberal than the common definition of a logic as a set of inferences satisfying the Tarskian properties of Reflexivity, Monotonicity, Transitivity, and Structurality (see, e.g. \citealp{font2016}). Recognizing non-transitive systems like $\mathsf{ST}$ or non-reflexive ones like $\mathsf{TS}$ as logics requires adopting a broader definition. We therefore distinguish a logic simpliciter from a Tarskian logic.

 \TabPositions{1cm}
\begin{Definition}[Tarskian logic]
    A logic $\mathsf{L}$ is said to be \textit{Tarskian} if it satisfies the following properties, for all $\Gamma, \Sigma, \lbrace \phi \rbrace \subseteq \mathcal{L}$.
    \begin{enumerate}[wide=0pt, align=left]
    
        \item[(R)] \tab $\phi \Rightarrow \phi \in \mathbb{L}$. \hspace*{0pt}\hfill (Reflexivity)\label{pr:id}
        \item[(M)] \tab If $ \Gamma \Rightarrow \phi \in \mathbb{L}$ and $\Gamma \subseteq \Sigma$, then $ \Sigma \Rightarrow \phi \in \mathbb{L}$. \hspace*{0pt}\hfill 
        (Monotonicity)\label{pr:mo}
        \item[(T)] \tab $\text{If} \ (\exists\Delta \neq \emptyset) \ \Delta \Rightarrow \phi \in \mathbb{L} \ \text{and} \ (\forall\delta \in \Delta) \ \Gamma \Rightarrow \delta \in \mathbb{L}$,\\ 
         \hphantom{\hspace{0.86cm}} $\text{then} \ \Gamma \Rightarrow \phi \in \mathbb{L}.$ \hspace*{0pt}\hfill 
        (Transitivity)\label{pr:tr}
        \item[(S)] \tab If $ \Gamma \Rightarrow \phi \in \mathbb{L}$, then $ \sigma[\Gamma] \Rightarrow \sigma[\phi] \in \mathbb{L}$ for all substitutions $\sigma$. \hspace*{0pt}\hfill 
        (Structurality)\label{pr:st}
    \end{enumerate}
\end{Definition}

A Tarskian logic can be obtained from any set of inferences by closing it under (R), (M), (T), and (S).

\begin{Definition}[Tarskian closure]
For any set of inferences $\mathbb{L}$, the Tarskian closure of $\mathbb{L}$, noted $Tar(\mathbb{L})$, is the least set $\mathbb{L}' \supseteq \mathbb{L}$ satisfying (R), (M), (T) and (S).
\end{Definition}

Given a logic, we provide a semantic characterization of its set of valid inferences. 
A \textit{valuation} is a function $v$ from $\mathcal{L}$ to $\mathcal{V}$, where $\mathcal{V}$ is a set of truth values. It is important to note that, as for now, valuations are not subject to any constraints of truth-functionality. In this paper, we will consider two possible sets of values $\mathcal{V}_{2}=\{\f,\t\}$ and $\mathcal{V}_{3}=\{\f,\i, \t\}$. 

\begin{Definition}
 A \textit{formula-standard} $\bf{z}$ is defined as $\bf{z} \subseteq \mathcal{V}$. A \textit{standard} is defined as $\bf{z}=\bf{xy}$, where $x,y$ are formula-standards.
\end{Definition}

\begin{Definition}\label{def:valuation}
   A valuation $v$ satisfies a formula $\phi$ according to a formula-standard $\bf{x}$ ($v\vDash_{\bf{x}} \phi$), if $v(\phi) \in \bf{x}$. A valuation $v$ satisfies an inference $\Gamma\Rightarrow\phi$ according to a standard $\bf{xy}$, denoted as $v\vDash_{\bf{xy}}\Gamma\Rightarrow\phi$, if the following holds:
   \[\text{if} \ v\vDash_{\bf{x}}\gamma \ \text{for all} \ \gamma \in \Gamma, \ \text{then} \ v\vDash_{\bf{y}}\phi.\]
   
\noindent
An inference is valid according to a standard $\bf{xy}$, denoted as $\vDash_{\bf{xy}}\Gamma\Rightarrow\phi$, if $v \vDash_{\bf{xy}}\Gamma\Rightarrow\phi$ for all $v$.
\end{Definition}

Two well-studied formula-standards on $\mathcal{V}_{3}$ are the \textit{strict} and the \textit{tolerant} standards, defined respectively as $\bf{s}=\{1\}$ and $\bf{t}=\{1,\i\}$. With these two standards, it is possible to define four possible standards: $\bf{ss},\bf{tt},\bf{st}$ and $\bf{ts}$ (strict-strict, tolerant-tolerant, strict-tolerant and tolerant strict, respectively). Given a standard $\bf{xy}$ 
we can define a set of inferences $\mathbb{XY}$ containing exactly the inferences valid according to $\bf{xy}$ by every valuation. In particular, from the standards, we can extract six sets of inferences, which are ordered by inclusion in Figure \ref{consequence-relations} \citep{Chemlaetal2017}.

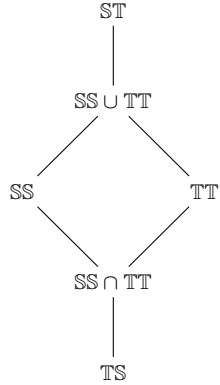
\begin{figure}[ht]
\centering
\begin{tikzpicture}[scale=0.6]
{\footnotesize
  \node (max) at (0,4) {$\mathbb{ST}$};
  \node (b) at (0,2)  {$\mathbb{SS} \cup \mathbb{TT}$};
  \node (a) at (-2,0) {$\mathbb{SS}$};
  \node (c) at (2, 0) {$\mathbb{TT}$};
  \node (d) at (0,-2) {$\mathbb{SS} \cap \mathbb{TT}$};
  \node (min) at (0,-4) {$\mathbb{TS}$};
  
  \draw (max) -- (b);
  \draw (b) -- (a);
  \draw (b) -- (c);
  \draw (a) -- (d);
  \draw (c) -- (d);
  \draw (d) -- (min);
}
\end{tikzpicture}
\caption{Six sets of inferences extracted from the standards $\bf{s},\bf{t}$}
\label{consequence-relations}
\end{figure}

Up to this point, we have imposed no constraints on the trivaluations. In this article, we will consider valuations respecting some three-valued \textit{scheme} $\X$, where  $\X$ is a triple $(f_\neg, f_\wedge, f_\vee)$ of operations. In particular, following \cite{da2023three}, we will consider \textit{Boolean normal monotonic} schemes (hereinafter referred to as BNM schemes).

\begin{Definition}[\citealp{da2023three}]
\label{def:boolean-normality2}
An $n$-ary operation $\star$ is \textit{Boolean normal} if and only if for $\{a_1,\ldots,a_n\} \subseteq \{0,1\}$, $\star(a_1,\ldots,a_n)=\star^{\CL}(a_1,\ldots,a_n)$, where $\star^{\CL}$ is the corresponding operation over the usual two-elements Boolean algebra. 
A scheme is \textit{Boolean normal} if and only if each of its operations is.
\end{Definition}

\noindent
When considering the Boolean normal bivaluations on $\mathcal{V}_{2}$, all the standards define what we refer to as the set of classical inferences, denoted $\mathbb{CL}$. 

For \textit{monotonicity}, we first assume that the truth values are ordered according to the order defined as: $\sfrac{1}{2} <_{\rm I} 0$ and $\sfrac{1}{2} <_{\rm I} 1$. The componentwise ordering induced by this relation is then defined as follows: $\langle a_1,\ldots,a_n \rangle \leq_{\rm I}^{comp} \langle b_1,\ldots,b_n \rangle$ if and only if $a_j \leq_{\rm I} b_j$ for all $1 \leq j \leq n$.

\begin{Definition}[\citealp{da2023three}]
\label{def:monotonicity2}
An $n$-ary operation $\star$ is \textit{monotonic} if and only if whenever $\langle a_1,\ldots,a_n \rangle \leq_{\rm I}^{comp} \langle b_1,\ldots,b_n \rangle$ then $\star(a_1,\ldots,a_n) \leq_{\rm I} \star(b_1,\ldots,b_n)$. 
A scheme is monotonic if and only if each of its operations is.
\end{Definition}

A valuation $v$ is said to respect a scheme $\bf{X}$ if for every operation $f^{\star}$ of $\bf{X}$ and corresponding symbol $\star$ in the language, it satisfies
\[v(\star (\phi_1 \ldots \phi_n)) = f^{\star} (v(\phi_1) \ldots v(\phi_n)).\]
\noindent
We now observe that a set of valuations, together with a standard, fully determines the semantics of a logic.

Occasionally, when working with a logic defined by some standard $\bf{xy}$ and a scheme $\mathbf{X}$, we denote the fact that an inference $\Gamma \Rightarrow \phi$ is satisfied by writing $v \vDash_{\bf{xy}}^{\mathbf{X}} \Gamma \Rightarrow \phi$. We say that $\Gamma \Rightarrow \phi$ is valid in $\mathsf{L}$ if it is satisfied by every valuation (symbolically, $\vDash_{\bf{xy}}^{\mathbf{X}} \Gamma \Rightarrow \phi$). When the context makes the scheme clear, we omit the superscript $\bf{X}$.

\section{Transitive closure and dual transitive closure}\label{sct:TrDef}
In this section, we introduce two operators: the \textit{transitive closure operator} and the \textit{dual transitive closure operator}. The transitive closure operator will enable us to define the set of valid inferences of an $\bf{st}$ logic based on a BNM scheme. This will be achieved by closing under transitivity the union of the sets of valid inferences of two $\bf{ss}$ and $\bf{tt}$ logics, based on a BNM scheme. In contrast, the dual transitive operator will allow us to define the set of valid inferences of a $\bf{ts}$ logic. Specifically, applying such operation to the intersection of the sets of valid inferences of two $\bf{ss}$ and $\bf{tt}$ logics will yield the sets of valid inferences of a $\bf{ts}$ logic.



\begin{Definition}[Transitive closure]
    For any set of inferences $\mathbb{L}$, the transitive closure of $\mathbb{L}$, noted $T(\mathbb{L})$, is the least set $\mathbb{L}' \supseteq \mathbb{L}$ such that: 
    $$ \text{If} \ (\exists\Delta \neq \emptyset) \ \Delta \Rightarrow \phi \in \mathbb{L}' \ \text{and} \ (\forall\delta \in \Delta) \ \Gamma \Rightarrow \delta \in \mathbb{L}', \ \text{then} \ \Gamma \Rightarrow \phi \in \mathbb{L}'.$$
\end{Definition}

\noindent
We need to ensure that the operator is a closure operator.\footnote{A function $C: \mathcal{P}(A) \to \mathcal{P}(A)$ is said to be a closure operator on $A$ if
\begin{itemize}
\itemsep0em
    \item[--] For all $X \subseteq A$, $X \subseteq C(X)$,
    \item[--] For all $X, Y \subseteq A$, if $X \subseteq Y$, then $C(X) \subseteq C(Y)$,
    \item[--] For all $X \subseteq A$, $CC(X)=C(X)$.
\end{itemize}} This is guaranteed by the following fact.

\begin{fact}
$T$ is a closure operator.
\end{fact}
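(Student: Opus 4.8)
The plan is to verify directly that $T$ satisfies the three defining conditions of a closure operator: extensivity, monotonicity, and idempotence. First, extensivity ($\mathbb{L} \subseteq T(\mathbb{L})$) is immediate from the definition, since $T(\mathbb{L})$ is defined to be a set $\mathbb{L}' \supseteq \mathbb{L}$. The only genuine preliminary is to confirm that this least such set exists and is well-defined. For this I would note that the collection of sets $\mathbb{L}'$ containing $\mathbb{L}$ and closed under the displayed transitivity condition is nonempty (the full set of all inferences over $\mathcal{L}$ trivially qualifies), and that it is closed under arbitrary intersection: if each $\mathbb{L}'_i$ satisfies the condition, so does $\bigcap_i \mathbb{L}'_i$, because whenever $\Delta \Rightarrow \phi$ and all $\Gamma \Rightarrow \delta$ lie in the intersection, they lie in each $\mathbb{L}'_i$, forcing $\Gamma \Rightarrow \phi \in \mathbb{L}'_i$ for every $i$. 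Hence $T(\mathbb{L})$ can be taken to be the intersection of all such sets, which is itself the least one.

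Second, for monotonicity I would argue that if $\mathbb{L}_1 \subseteq \mathbb{L}_2$, then $T(\mathbb{L}_1) \subseteq T(\mathbb{L}_2)$. The cleanest route is to observe that $T(\mathbb{L}_2)$ is a set containing $\mathbb{L}_2 \supseteq \mathbb{L}_1$ and closed under the transitivity condition; since $T(\mathbb{L}_1)$ is the \emph{least} such set containing $\mathbb{L}_1$, we get $T(\mathbb{L}_1) \subseteq T(\mathbb{L}_2)$ immediately. This reuses the characterization of $T$ as a least closed superset and requires no induction.

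Third, for idempotence I must show $T(T(\mathbb{L})) = T(\mathbb{L})$. The inclusion $T(\mathbb{L}) \subseteq T(T(\mathbb{L}))$ is just extensivity applied to $T(\mathbb{L})$. For the reverse inclusion, the key observation is that $T(\mathbb{L})$ is \emph{itself} closed under the transitivity condition (this is precisely what it means for $T(\mathbb{L})$ to be a valid choice of $\mathbb{L}'$ in the definition). Therefore $T(\mathbb{L})$ is a set containing $T(\mathbb{L})$ and closed under transitivity, so since $T(T(\mathbb{L}))$ is the least such set, $T(T(\mathbb{L})) \subseteq T(\mathbb{L})$.

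I do not expect any of these steps to be a real obstacle, since the definition of $T$ as a least closed superset essentially hands us the closure axioms for free once the existence of the least set is secured. The only point deserving care is the well-definedness argument: one should make explicit that the class of transitivity-closed supersets of $\mathbb{L}$ is closed under intersection, so that ``the least such set'' genuinely exists rather than being merely postulated. All remaining verifications then follow formally from the minimality characterization, with no case analysis on the structure of inferences required.
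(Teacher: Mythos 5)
Your proof is correct and follows essentially the same route as the paper's: monotonicity and idempotence are both derived from the minimality of $T(\mathbb{L})$ among transitivity-closed supersets, exactly as in the paper. Your additional verification that the least closed superset exists (via nonemptiness and closure under intersection of the family of closed supersets) is a sound point of rigor that the paper leaves implicit, but it does not change the structure of the argument.
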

\begin{proof}
\quad
    \begin{itemize}
        \item[--] The inclusion $\mathbb{L} \subseteq T(\mathbb{L})$ for all $\mathbb{L}$ follows directly from the definition of transitive closure.

    \item[--] Turning to the monotonicity of $T$, assume that $\mathbb{L} \subseteq \mathbb{L}'$. Then $\mathbb{L} \subseteq \mathbb{L}' \subseteq T(\mathbb{L}')$. So, $T(\mathbb{L}')$ is an extension of $\mathbb{L}$ such that for all $\Delta \neq \emptyset, \text{if} \ \Delta \Rightarrow \phi \in T(\mathbb{L}') \ \text{and} \ \Gamma \Rightarrow \delta \in T(\mathbb{L}') \ \text{for all} \ \delta \in \Delta, \ \text{then} \ \Gamma \Rightarrow \phi \in T(\mathbb{L}')$. By definition, $T(\mathbb{L})$ is the least extension of $\mathbb{L}$ satisfying such a property, so $T(\mathbb{L}) \subseteq T(\mathbb{L}')$.


    \medskip
    
   \item[--] To show that $T(T(\mathbb{L}))=T(\mathbb{L})$, it suffices to establish the left-to-right inclusion, as the reverse direction follows directly from the previously proven fact that $\mathbb{L} \subseteq T(\mathbb{L})$ for all $\mathbb{L}$. 
   Trivially, $T(\mathbb{L})$ is an extension of itself and is such that for all $\Delta \neq \emptyset, \text{if} \ \Delta \Rightarrow \phi \in T(\mathbb{L}) \ \text{and} \ \Gamma \Rightarrow \delta \in T(\mathbb{L}) \ \text{for all} \ \delta \in \Delta, \ \text{then} \ \Gamma \Rightarrow \phi \in T(\mathbb{L})$. Now, by definition, $T(T(\mathbb{L}))$ is the least extension of $T(\mathbb{L})$ with such a property, hence $T(T(\mathbb{L})) \subseteq T(\mathbb{L})$.
    \end{itemize}
\end{proof}

\begin{Definition}[Dual transitive closure]
    For any set of inferences $\mathbb{L}$, the dual transitive closure of $\mathbb{L}$, noted $T^d(\mathbb{L})$, is the greatest set $\mathbb{L}' \subseteq \mathbb{L}$ such that:
    $$ \text{If} \ (\exists\Delta \neq \emptyset) \ \Delta \Rightarrow \phi \not\in \mathbb{L}' \ \text{and} \ (\forall\delta \in \Delta) \ \Gamma \Rightarrow \delta \not\in \mathbb{L}', \ \text{then} \ \Gamma \Rightarrow \phi \not\in \mathbb{L}'.$$
\end{Definition}

Next, we show in what sense this notion of dual transitive closure is dual to the notion of transitive closure. The proposition hereinafter eventually expresses the duality of the transitive closure operator and the dual transitive closure operator. The two operators are interdefinable through the set-theoretical notion of absolute complement.

\begin{proposition}\label{prp:duaTr}
    \begin{align*}
    T^{d}(\mathbb{L}) &= \overline{T(\overline{\mathbb{L}})}\\
    T(\mathbb{L}) &= \overline{T^{d}(\overline{\mathbb{L}})}.
    \end{align*}
\end{proposition}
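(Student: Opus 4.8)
The plan is to prove the first identity, $T^{d}(\mathbb{L}) = \overline{T(\overline{\mathbb{L}})}$, and then obtain the second one for free. Indeed, once the first holds for every set of inferences, I substitute $\overline{\mathbb{L}}$ for $\mathbb{L}$ to get $T^{d}(\overline{\mathbb{L}}) = \overline{T(\overline{\overline{\mathbb{L}}})} = \overline{T(\mathbb{L})}$, and complementing both sides yields $T(\mathbb{L}) = \overline{T^{d}(\overline{\mathbb{L}})}$. So everything reduces to the first equation.

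The crux is a purely syntactic rewriting observation: for any set of inferences $\mathbb{M}$, the set $\mathbb{M}$ satisfies the transitivity condition used in the definition of $T$ if and only if its complement $\overline{\mathbb{M}}$ satisfies the dual transitivity condition used in the definition of $T^{d}$. This is immediate because $X \in \mathbb{M}$ is equivalent to $X \notin \overline{\mathbb{M}}$: writing the transitivity condition for $\mathbb{M}$ and replacing every membership $\cdot \in \mathbb{M}$ by the corresponding non-membership $\cdot \notin \overline{\mathbb{M}}$ turns it verbatim into the dual transitivity condition for $\overline{\mathbb{M}}$, with the quantifier structure (a nonempty witnessing $\Delta$, universally quantified $\delta$) left untouched.

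With this in hand, I verify that $\overline{T(\overline{\mathbb{L}})}$ meets the three requirements that characterize $T^{d}(\mathbb{L})$ as the greatest subset of $\mathbb{L}$ satisfying dual transitivity. First, since $T$ is a closure operator (by the preceding Fact), $\overline{\mathbb{L}} \subseteq T(\overline{\mathbb{L}})$, so complementing gives $\overline{T(\overline{\mathbb{L}})} \subseteq \overline{\overline{\mathbb{L}}} = \mathbb{L}$. Second, $T(\overline{\mathbb{L}})$ satisfies the transitivity condition by definition, so by the rewriting observation $\overline{T(\overline{\mathbb{L}})}$ satisfies dual transitivity. Third, for maximality, I take any $\mathbb{L}' \subseteq \mathbb{L}$ satisfying dual transitivity; then $\overline{\mathbb{L}} \subseteq \overline{\mathbb{L}'}$ and, by the rewriting observation again, $\overline{\mathbb{L}'}$ satisfies the transitivity condition. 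Since $T(\overline{\mathbb{L}})$ is the least set containing $\overline{\mathbb{L}}$ and satisfying that condition, I conclude $T(\overline{\mathbb{L}}) \subseteq \overline{\mathbb{L}'}$, whence $\mathbb{L}' \subseteq \overline{T(\overline{\mathbb{L}})}$. Combining the three points gives $T^{d}(\mathbb{L}) = \overline{T(\overline{\mathbb{L}})}$.

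I do not anticipate a serious obstacle here: the whole argument unwinds the order-reversing nature of complementation (which swaps least for greatest and reverses $\subseteq$) together with the membership/non-membership duality between the two defining conditions. The only point requiring a little care is the maximality step, where one must correctly track that passing to complements reverses both the inclusion $\mathbb{L}' \subseteq \mathbb{L}$ and the extremality (greatest versus least), so that the universal property of $T(\overline{\mathbb{L}})$ as the least such extension is applied in the right direction.
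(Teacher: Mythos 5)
Your proof is correct and takes essentially the same route as the paper's: both arguments rest on the observation that complementation converts the transitivity condition into the dual transitivity condition and exchanges the universal property of the least transitive extension of $\overline{\mathbb{L}}$ for that of the greatest dually transitive subset of $\mathbb{L}$. Your only (minor) refinement is deriving the second identity by substituting $\overline{\mathbb{L}}$ into the first and complementing, where the paper merely remarks that it is proved similarly.
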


\begin{proof}
For any set of inferences $\mathbb{L}$, let $P(\mathbb{L})$ express that if $(\exists\Delta \neq \emptyset) \ \Delta \Rightarrow \phi \in \mathbb{L}$ and $(\forall\delta \in \Delta) \ \Gamma \Rightarrow \delta \in \mathbb{L}$, it follows that $\Gamma \Rightarrow \phi \in \mathbb{L}$.  

Starting with the first identity, by definition, $T(\overline{\mathbb{L}})$ is the least element of the set  
\[
\lbrace \mathbb{L}' \mid \overline{\mathbb{L}} \subseteq \mathbb{L}', \ P(\mathbb{L}') \rbrace.
\]  
Equivalently, $\overline{T(\overline{\mathbb{L}})}$ is the greatest element of  
\[
\lbrace \overline{\mathbb{L}'} \mid \overline{\mathbb{L}} \subseteq \mathbb{L}', \ P(\mathbb{L}') \rbrace,
\]  
which coincides with  
\[
\lbrace \mathbb{L}'' \mid \mathbb{L}'' \subseteq \mathbb{L}, \ P(\overline{\mathbb{L}''}) \rbrace.
\]  
That is, $\overline{T(\overline{\mathbb{L}})}$ is the largest subset $\mathbb{L}''$ of $\mathbb{L}$ such that if $(\exists\Delta \neq \emptyset) \ \Delta \Rightarrow \phi \not\in \mathbb{L}''$ and $(\forall\delta \in \Delta) \ \Gamma \Rightarrow \delta \not\in \mathbb{L}''$, then $\Gamma \Rightarrow \phi \not\in \mathbb{L}''$.
Therefore, we conclude that $T^d(\mathbb{L}) = \overline{T(\overline{\mathbb{L}})}$. The other identity can be proved similarly.
\end{proof}

\enlargethispage{15pt}

It can now be checked that $T^d$ is an \textit{interior operator}.\footnote{A function $I: \mathcal{P}(A) \to \mathcal{P}(A)$ is an interior operator if it satisfies
\begin{itemize}
\itemsep0em
    \item[--] For all $X \subseteq A$, $I(X) \subseteq X$,
    \item[--] For all $X, Y \subseteq A$, if $X \subseteq Y$, then $I(X) \subseteq I(Y)$,
    \item[--] For all $X \subseteq A$, $II(X)=I(X)$.
\end{itemize}}

\begin{fact}
    $T^{d}$ is an interior operator.
\end{fact}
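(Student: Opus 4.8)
The plan is to prove that $T^d$ is an interior operator by exploiting the duality established in Proposition~\ref{prp:duaTr}, namely $T^d(\mathbb{L}) = \overline{T(\overline{\mathbb{L}})}$, together with the already-established fact that $T$ is a closure operator. The three interior-operator conditions (contractivity $T^d(X) \subseteq X$, monotonicity, and idempotence $T^dT^d(X) = T^d(X)$) are precisely the order-theoretic duals of the three closure-operator conditions, and complementation is the order-reversing involution that converts one into the other. So rather than reasoning directly about the defining condition of $T^d$, I would simply push each closure property through the identity $T^d(\cdot) = \overline{T(\overline{\cdot})}$.

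Concretely, I would proceed condition by condition. For contractivity, from $\overline{\mathbb{L}} \subseteq T(\overline{\mathbb{L}})$ (the extensivity of $T$), taking complements and using that complementation reverses inclusions gives $\overline{T(\overline{\mathbb{L}})} \subseteq \overline{\overline{\mathbb{L}}} = \mathbb{L}$, i.e.\ $T^d(\mathbb{L}) \subseteq \mathbb{L}$. For monotonicity, assume $\mathbb{L} \subseteq \mathbb{L}'$; then $\overline{\mathbb{L}'} \subseteq \overline{\mathbb{L}}$, so by monotonicity of $T$ we get $T(\overline{\mathbb{L}'}) \subseteq T(\overline{\mathbb{L}})$, and complementing again yields $\overline{T(\overline{\mathbb{L}})} \subseteq \overline{T(\overline{\mathbb{L}'})}$, that is $T^d(\mathbb{L}) \subseteq T^d(\mathbb{L}')$. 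For idempotence, I would compute $T^d T^d(\mathbb{L}) = \overline{T(\overline{\overline{T(\overline{\mathbb{L}})}})} = \overline{T(T(\overline{\mathbb{L}}))} = \overline{T(\overline{\mathbb{L}})} = T^d(\mathbb{L})$, where the middle step cancels a double complement and the next uses idempotence $TT = T$. Each of these is a two- or three-line manipulation, so I would keep them compact.

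The only genuine subtlety—and hence the step I would state with a little care—is that complementation is taken within a fixed ambient set, namely the set of all inferences over $\mathcal{L}$, so that $\overline{\overline{\mathbb{L}}} = \mathbb{L}$ and the order-reversal of $\subseteq$ under $\overline{(\cdot)}$ both hold cleanly. There is no real obstacle here: once Proposition~\ref{prp:duaTr} and the preceding Fact are in hand, the argument is purely formal. Indeed, this mirrors exactly the general order-theoretic principle that if $C$ is a closure operator on a lattice and $\overline{(\cdot)}$ is an antitone involution, then $X \mapsto \overline{C(\overline{X})}$ is an interior operator. I would note this general pattern briefly and then give the three short dualized verifications above, possibly even remarking that the whole fact is immediate from Proposition~\ref{prp:duaTr} for a reader willing to invoke that principle.

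\begin{proof}
By Proposition~\ref{prp:duaTr}, $T^d(\mathbb{L}) = \overline{T(\overline{\mathbb{L}})}$, where complements are taken in the set of all inferences over $\mathcal{L}$; in particular $\overline{\overline{\mathbb{L}}} = \mathbb{L}$ and $X \subseteq Y$ iff $\overline{Y} \subseteq \overline{X}$. Using that $T$ is a closure operator, we verify the three interior-operator conditions.
\begin{itemize}
\itemsep0em
    \item[--] Since $\overline{\mathbb{L}} \subseteq T(\overline{\mathbb{L}})$, complementing gives $T^d(\mathbb{L}) = \overline{T(\overline{\mathbb{L}})} \subseteq \overline{\overline{\mathbb{L}}} = \mathbb{L}$.
    \item[--] Assume $\mathbb{L} \subseteq \mathbb{L}'$. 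Then $\overline{\mathbb{L}'} \subseteq \overline{\mathbb{L}}$, so by monotonicity of $T$ we have $T(\overline{\mathbb{L}'}) \subseteq T(\overline{\mathbb{L}})$, whence $T^d(\mathbb{L}) = \overline{T(\overline{\mathbb{L}})} \subseteq \overline{T(\overline{\mathbb{L}'})} = T^d(\mathbb{L}')$.
    \item[--] Using idempotence of $T$ and cancelling a double complement,
    \[
    T^d T^d(\mathbb{L}) = \overline{T\left(\overline{\overline{T(\overline{\mathbb{L}})}}\right)} = \overline{T\left(T(\overline{\mathbb{L}})\right)} = \overline{T(\overline{\mathbb{L}})} = T^d(\mathbb{L}).
    \]
\end{itemize}
Hence $T^d$ is an interior operator.
\end{proof}
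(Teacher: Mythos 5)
Your proof is correct and follows essentially the same route as the paper's: both push the three closure-operator properties of $T$ through the identity $T^d(\mathbb{L}) = \overline{T(\overline{\mathbb{L}})}$ of Proposition \ref{prp:duaTr}, using that complementation reverses inclusions. The only cosmetic difference is in the idempotence step, where you invoke $TT = T$ directly to get equality while the paper derives the nontrivial inclusion $T^dT^d(\mathbb{L}) \subseteq T^d(\mathbb{L})$ from extensivity of $T$; your version is, if anything, slightly tidier.
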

\begin{proof}
\quad
    \begin{itemize}
        \item[--] Since $T$ is a closure operator, $\overline{\mathbb{L}} \subseteq T(\overline{\mathbb{L}})$, and thus $T^{d}(\mathbb{L}) = \overline {T(\overline{\mathbb{L}})} \subseteq \overline{\overline{\mathbb{L}}} = \mathbb{L}$ by Proposition \ref{prp:duaTr}.
        \item[--] Assume $\mathbb{L} \subseteq \mathbb{L}'$. Then $\overline{\mathbb{L}'} \subseteq \overline{\mathbb{L}}$, so $T(\overline{\mathbb{L}'}) \subseteq T(\overline{\mathbb{L}})$ since $T$ is a closure operator, and hence $T^{d}(\mathbb{L}) = \overline{T(\overline{\mathbb{L}})} \subseteq \overline{T(\overline{\mathbb{L}'})} = T^{d}(\mathbb{L}')$ by Proposition \ref{prp:duaTr}.
        \item[--] Since $T$ is a closure operator, $T(\overline{\mathbb{L}}) \subseteq T(T(\overline{\mathbb{L}}))$, so $\overline{T(T(\overline{\mathbb{L}}))} \subseteq \overline{T(\overline{\mathbb{L}})}$. But by Proposition \ref{prp:duaTr}, $T^{d}(T^{d}(\mathbb{L})) = \overline{T(\overline{T^{d}(\mathbb{L})})} = \overline{T(T(\overline{\mathbb{L}}))}$, therefore $T^{d}(T^{d}(\mathbb{L})) \subseteq \overline{T(\overline{\mathbb{L}})} = T^{d}(\mathbb{L})$. 
    \end{itemize}
\end{proof}

With these definitions and facts at our disposal, we are equipped to offer an exact characterization of $\mathbb{ST}$ as the transitive closure of $\mathbb{SS} \cup \mathbb{TT}$, and $\mathbb{TS}$ as the dual transitive closure of $\mathbb{SS} \cap \mathbb{TT}$.

\section{Characterization of $\mathbb{ST}$ and $\mathbb{TS}$}\label{c}

Our aim in this section is to characterize $\mathbf{st}$- and $\mathbf{ts}$-logics by combining $\mathbf{ss}$- and $\mathbf{tt}$-logics. We first establish that the union of $\mathbb{SS}$ and $\mathbb{TT}$ does not yield $\mathbb{ST}$. We then prove that the transitive closure of their union actually coincides with $\mathbb{ST}$ (and hence with $\mathbb{CL}$). A dual version of this result is then proved for the characterization of $\mathbf{ts}$-logics: their set of valid inferences is obtained by applying the dual transitive closure to the intersection of the sets of $\mathbf{tt}$- and $\mathbf{ss}$-valid inferences.
Importantly, all the results in this section assume that the sets of inferences are defined using \textit{BNM} schemes, possibly different from one logic to another. We start by restating the following results  proven in \cite{da2023three}:

\begin{theorem}[\citealp{da2023three}]\label{thm:cl=st}
For every BNM scheme, $\mathbb{ST} = \mathbb{CL}$.
\end{theorem}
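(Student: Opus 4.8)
The plan is to prove the two inclusions $\mathbb{ST} \subseteq \mathbb{CL}$ and $\mathbb{CL} \subseteq \mathbb{ST}$ separately, fixing an arbitrary BNM scheme $\X$ throughout. The two defining features of such a scheme will play distinct roles: Boolean normality lets me move between bivaluations and trivaluations without disturbing the computed values, while monotonicity lets me control exactly how the third value $\sfrac{1}{2}$ can propagate.

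For the inclusion $\mathbb{ST} \subseteq \mathbb{CL}$, I would first observe that every Boolean normal bivaluation $w \colon \mathcal{L} \to \mathcal{V}_2$ is, in particular, a trivaluation respecting $\X$: since $w$ takes only values in $\{0,1\}$ on atoms and $\X$ is Boolean normal, each operation of $\X$ coincides with its classical counterpart on these inputs, so $w$ respects $\X$ and never leaves $\mathcal{V}_2$. Now suppose $\Gamma \Rightarrow \phi \in \mathbb{ST}$ and let $w$ be any such bivaluation with $w \vDash_{\mathbf{s}} \gamma$ for all $\gamma \in \Gamma$. By $\mathbf{st}$-validity $w \vDash_{\mathbf{t}} \phi$, i.e.\ $w(\phi) \in \mathbf{t} = \{1,\sfrac{1}{2}\}$; but $w(\phi) \in \{0,1\}$, so $w(\phi)=1$ and thus $w \vDash_{\mathbf{s}} \phi$. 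As every standard collapses to classical validity on $\mathcal{V}_2$, this gives $\Gamma \Rightarrow \phi \in \mathbb{CL}$.

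The converse $\mathbb{CL} \subseteq \mathbb{ST}$ is where the real work lies, and I would base it on a \emph{rounding} argument supported by a monotonicity lemma. The lemma states that if two trivaluations $v, v'$ respecting $\X$ satisfy $v(p) \leq_{\rm I} v'(p)$ for every atom $p$, then $v(\psi) \leq_{\rm I} v'(\psi)$ for every formula $\psi$; this is an induction on $\psi$ whose inductive step is exactly monotonicity of the operations of $\X$ with respect to $\leq_{\rm I}^{comp}$. The decisive observation is that $0$ and $1$ are the two maximal elements of $\leq_{\rm I}$ (with $\sfrac{1}{2}$ at the bottom), so $v(\psi) \leq_{\rm I} v'(\psi)$ forces $v'(\psi)=v(\psi)$ whenever $v(\psi) \in \{0,1\}$. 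Given this, suppose $\Gamma \Rightarrow \phi \in \mathbb{CL}$, take any trivaluation $v$ respecting $\X$ with $v(\gamma)=1$ for all $\gamma \in \Gamma$, and argue by contradiction that $v(\phi)=0$. I define a bivaluation $v'$ agreeing with $v$ on every atom where $v(p) \in \{0,1\}$ and assigning an arbitrary classical value where $v(p)=\sfrac{1}{2}$; since $\sfrac{1}{2}$ is $\leq_{\rm I}$-least we have $v(p) \leq_{\rm I} v'(p)$ everywhere, so by the lemma each premise keeps value $1$ and the conclusion keeps value $0$ under $v'$. Because $v'$ uses only $\{0,1\}$ and $\X$ is Boolean normal, $v'$ is a genuine Boolean normal bivaluation, and it witnesses the classical invalidity of $\Gamma \Rightarrow \phi$, contradicting $\Gamma \Rightarrow \phi \in \mathbb{CL}$. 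Hence $v(\phi)\neq 0$, i.e.\ $v \vDash_{\mathbf{t}} \phi$, and so $\Gamma \Rightarrow \phi \in \mathbb{ST}$.

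The main obstacle is the monotonicity lemma together with the maximality observation: it is the single point where both halves of the BNM hypothesis are indispensable, monotonicity to transport $\leq_{\rm I}$-comparisons up from atoms to arbitrary formulas, and Boolean normality to guarantee that the rounded valuation $v'$ really computes classically. Everything else is routine once the preservation of the ``classical'' values $0$ and $1$ under raising $\sfrac{1}{2}$-inputs is secured.
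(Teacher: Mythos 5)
Your proof is correct, and the paper itself offers no argument to compare against: it imports this theorem from \cite{da2023three} without proof. Your reconstruction is exactly the standard argument from that source --- the easy direction noting that Boolean normal bivaluations are among the trivaluations respecting the scheme (with $\mathbf{s}$- and $\mathbf{t}$-satisfaction collapsing on $\mathcal{V}_2$), and the hard direction via the monotonicity lemma that sharpening $\sfrac{1}{2}$-valued atoms to classical values preserves the values of all formulas already in $\{0,1\}$, since $0$ and $1$ are the $\leq_{\rm I}$-maximal elements --- so nothing needs to be added.
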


\begin{theorem}[\citealp{da2023three}]\label{thm:empty=st}
For every BNM scheme, $\mathbb{TS} = \emptyset$.
\end{theorem}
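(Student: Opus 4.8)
The plan is to refute every inference simultaneously by exhibiting a single valuation that tolerantly satisfies any set of premises while failing to strictly satisfy any conclusion. The natural candidate is the valuation $v_{\sfrac{1}{2}}$ that assigns $\sfrac{1}{2}$ to every propositional variable. Since $\sfrac{1}{2} \in \mathbf{t}$ but $\sfrac{1}{2} \notin \mathbf{s}$, it suffices to show that $v_{\sfrac{1}{2}}$ sends every formula to $\sfrac{1}{2}$: then for an arbitrary inference $\Gamma \Rightarrow \phi$ all premises receive a tolerantly designated value while the conclusion fails to be strict, witnessing $\Gamma \Rightarrow \phi \notin \mathbb{TS}$.

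The heart of the argument is therefore the lemma that, under any BNM scheme, $v_{\sfrac{1}{2}}(\psi) = \sfrac{1}{2}$ for every $\psi \in \mathcal{L}$, which I would prove by induction on the complexity of $\psi$. The base case is immediate from the definition of $v_{\sfrac{1}{2}}$. For the inductive step it is enough to check, for each connective, that its operation returns $\sfrac{1}{2}$ when fed only $\sfrac{1}{2}$'s. This is where the two defining properties of a BNM scheme combine: because $\sfrac{1}{2}$ is the $\leq_{\rm I}$-least value, the all-$\sfrac{1}{2}$ input lies componentwise below both the all-$0$ and the all-$1$ inputs, so monotonicity (Definition \ref{def:monotonicity2}) forces $f^{\star}(\sfrac{1}{2},\ldots,\sfrac{1}{2})$ to lie $\leq_{\rm I}$ both $f^{\star}(0,\ldots,0)$ and $f^{\star}(1,\ldots,1)$; Boolean normality (Definition \ref{def:boolean-normality2}) then pins these two classical outputs down---for $\wedge$ and $\vee$ they are $0$ and $1$ respectively, and for $\neg$ they are $1$ and $0$---so that in each case the two outputs are \emph{distinct} elements of $\{0,1\}$. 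The only value that is $\leq_{\rm I}$ both $0$ and $1$ is $\sfrac{1}{2}$ itself, whence $f^{\star}(\sfrac{1}{2},\ldots,\sfrac{1}{2}) = \sfrac{1}{2}$.

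With the lemma in hand the conclusion is immediate: $v_{\sfrac{1}{2}}$ respects the scheme and refutes every inference under the $\mathbf{ts}$ standard, including those with empty premise sets (where the $\mathbf{ts}$ condition reduces to $v(\phi) = 1$, which $v_{\sfrac{1}{2}}$ violates), so $\mathbb{TS} = \emptyset$. I do not expect any genuine obstacle here; the only subtlety worth stating carefully is the forcing step in the inductive case, where one must be explicit that $\{a : a \leq_{\rm I} 0\} \cap \{a : a \leq_{\rm I} 1\} = \{\sfrac{1}{2}\}$, which is exactly what prevents a compound formula from inheriting a classical value $0$ or $1$. Note that the argument invokes the BNM hypothesis only for the three connectives $\neg, \wedge, \vee$ of $\mathcal{L}$ and nothing specific to the Strong Kleene scheme, so it applies uniformly to every such scheme, as the statement requires.
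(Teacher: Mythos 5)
Your proof is correct, and it is essentially the canonical argument: the paper does not prove this theorem itself but restates it from \citet{da2023three}, and your key lemma---that the all-$\sfrac{1}{2}$ valuation is a fixed point of every BNM scheme, since monotonicity applied to $(\sfrac{1}{2},\ldots,\sfrac{1}{2}) \leq_{\rm I}^{comp} (0,\ldots,0)$ and $(\sfrac{1}{2},\ldots,\sfrac{1}{2}) \leq_{\rm I}^{comp} (1,\ldots,1)$ forces the output below two \emph{distinct} classical values, whose only common $\leq_{\rm I}$-lower bound is $\sfrac{1}{2}$---is precisely the fact the paper itself invokes in the footnote to Fact \ref{fct:subs}. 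Your care about the distinctness of the two classical outputs for $\neg$, $\wedge$, $\vee$, and about inferences with empty premise sets, is exactly right; there is no gap.
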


The first theorem shows that any $\bf{st}$-logic based on a BNM scheme collapses with classical logic. The second one that any $\bf{ts}$-logic based on a BNM scheme is empty (i.e. invalidate every inference). 

We now show that the union of the sets of inferences $\mathbb{SS}$ and $\mathbb{TT}$, obtained from the $\mathbf{ss}$ and $\mathbf{tt}$ standards, differs from $\mathbb{ST}$. To establish this, we provide a counterexample: an inference that is $\mathsf{ST}$-valid but neither $\mathsf{SS}$- nor $\mathsf{TT}$-valid. First, we state the following relation between these sets.

\begin{fact}\label{fct:subs}
    Let $\mathbb{SS}, \mathbb{TT}$ and $\mathbb{ST}$ be defined by (possibly different) BNM schemes. Then
    \begin{itemize}
    \itemsep0em
        \item $\mathbb{SS}, \mathbb{TT} \subseteq \mathbb{ST}$,
        \item $\mathbb{SS}$ and $\mathbb{TT}$ are incomparable.
    \end{itemize}
\end{fact}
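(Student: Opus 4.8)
The plan is to prove the two claims separately. For the inclusions $\mathbb{SS}, \mathbb{TT} \subseteq \mathbb{ST}$, I would argue directly from Definition \ref{def:valuation} by unpacking the satisfaction conditions. Fix a valuation $v$ respecting the relevant scheme. Observe that the strict standard is contained in the tolerant standard, i.e. $\mathbf{s} = \{1\} \subseteq \{1, \sfrac{1}{2}\} = \mathbf{t}$. Now suppose $\vDash_{\mathbf{ss}} \Gamma \Rightarrow \phi$ and that $v \vDash_{\mathbf{s}} \gamma$ for all $\gamma \in \Gamma$; then $v \vDash_{\mathbf{s}} \phi$, and since $\mathbf{s} \subseteq \mathbf{t}$ we get $v \vDash_{\mathbf{t}} \phi$, whence $v \vDash_{\mathbf{st}} \Gamma \Rightarrow \phi$. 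This gives $\mathbb{SS} \subseteq \mathbb{ST}$. Symmetrically, suppose $\vDash_{\mathbf{tt}} \Gamma \Rightarrow \phi$ and that $v \vDash_{\mathbf{s}} \gamma$ for all $\gamma \in \Gamma$; then $v \vDash_{\mathbf{t}} \gamma$ for all such $\gamma$ by $\mathbf{s} \subseteq \mathbf{t}$, so $v \vDash_{\mathbf{t}} \phi$, again yielding $v \vDash_{\mathbf{st}} \Gamma \Rightarrow \phi$. Hence $\mathbb{TT} \subseteq \mathbb{ST}$. These inclusions are essentially the content of the ordering already displayed in Figure \ref{consequence-relations}, so the only subtlety is to keep track of the fact that the three sets may be built from \emph{different} BNM schemes; the argument above is robust to this, since it fixes one scheme at a time and each of the two inclusions involves the strict-tolerant set built from whichever scheme we like, provided we read the statement as comparing each of $\mathbb{SS}$, $\mathbb{TT}$ to the corresponding $\mathbb{ST}$.

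For incomparability of $\mathbb{SS}$ and $\mathbb{TT}$, the plan is to exhibit two concrete inferences, each valid in one of the two logics but not the other. Since the claim must hold for \emph{every} BNM scheme, I would choose witnesses that work uniformly. A natural candidate separating $\mathbb{TT}$ from $\mathbb{SS}$ is an instance of the law of excluded middle, $\emptyset \Rightarrow p \lor \neg p$ (or with a nonempty but harmless premise set if single-premise form is preferred); a natural candidate separating $\mathbb{SS}$ from $\mathbb{TT}$ is an explosion-type inference such as $p \land \neg p \Rightarrow q$. The key computation is to check, using Boolean normality and the value assignments $v(p) = \sfrac{1}{2}$, what $v(p \lor \neg p)$ and $v(p \land \neg p)$ can be. Here the hard part will be that Boolean normality constrains the connectives only on classical inputs $\{0,1\}$, so I cannot assume the Strong Kleene truth tables: for an arbitrary BNM scheme the value of $\neg(\sfrac{1}{2})$, and of $\sfrac{1}{2} \lor \sfrac{1}{2}$ and $\sfrac{1}{2} \land \sfrac{1}{2}$, are not fully determined. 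I therefore need to identify which separating inferences survive this weakness.

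This is where I expect the main obstacle. The resolution is to use monotonicity together with Boolean normality to pin down the relevant values when all inputs are $\sfrac{1}{2}$. By Definition \ref{def:monotonicity2} and the order $\sfrac{1}{2} <_{\rm I} 0$, $\sfrac{1}{2} <_{\rm I} 1$, monotonicity forces $\star(\sfrac{1}{2}, \ldots, \sfrac{1}{2})$ to be below each classical output; for disjunction, $\langle \sfrac{1}{2}, \sfrac{1}{2} \rangle \leq_{\rm I}^{comp} \langle 0, 1 \rangle$ and $\leq_{\rm I}^{comp} \langle 1, 0 \rangle$ give $f_\vee(\sfrac{1}{2}, \sfrac{1}{2}) \leq_{\rm I} 1$ and $\leq_{\rm I} 0$, and the only value below both $0$ and $1$ is $\sfrac{1}{2}$, so $f_\vee(\sfrac{1}{2}, \sfrac{1}{2}) = \sfrac{1}{2}$; the dual argument gives $f_\wedge(\sfrac{1}{2}, \sfrac{1}{2}) = \sfrac{1}{2}$, and similarly $f_\neg(\sfrac{1}{2}) = \sfrac{1}{2}$. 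Taking $v(p) = \sfrac{1}{2}$ then yields $v(p \lor \neg p) = \sfrac{1}{2} \notin \mathbf{s}$ but $\in \mathbf{t}$, so $p \lor \neg p$ witnesses $\mathbb{TT} \setminus \mathbb{SS}$, and $v(p \land \neg p) = \sfrac{1}{2} \in \mathbf{t}$ forces $v \vDash_{\mathbf{t}}$ the premise while any $v(q) = 0$ falsifies the conclusion strictly, so $p \land \neg p \Rightarrow q$ witnesses $\mathbb{SS} \setminus \mathbb{TT}$ (using that $\emptyset \neq \mathbf{s}$ for the premise in the $\mathbf{ss}$ case and noting $v(p \land \neg p) = \sfrac{1}{2} \notin \mathbf{s}$ makes the inference vacuously $\mathbf{ss}$-satisfied). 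The crux of the whole proof is thus this monotonicity-plus-Boolean-normality lemma fixing the all-$\sfrac{1}{2}$ values; once it is in hand, the separating inferences work for every BNM scheme and incomparability follows.
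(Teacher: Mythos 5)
Your proof is correct in substance, and on the incomparability half it coincides with the paper's: the paper uses exactly the same witnesses ($p \wedge \neg p \Rightarrow r$ for $\mathbb{SS}\setminus\mathbb{TT}$ and $\emptyset \Rightarrow p \vee \neg p$ for $\mathbb{TT}\setminus\mathbb{SS}$), and the key computation you carry out---that every BNM scheme forces $f_\neg(\sfrac{1}{2}) = f_\wedge(\sfrac{1}{2},\sfrac{1}{2}) = f_\vee(\sfrac{1}{2},\sfrac{1}{2}) = \sfrac{1}{2}$---is precisely what the paper asserts in a footnote without proof, so spelling out the monotonicity argument is a genuine improvement, and it correctly makes the witnesses uniform across schemes. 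One local slip: for disjunction, the tuples $\langle 0,1\rangle$ and $\langle 1,0\rangle$ both yield output $1$ under Boolean normality, so they only give $f_\vee(\sfrac{1}{2},\sfrac{1}{2}) \leq_{\rm I} 1$; to obtain the second bound $\leq_{\rm I} 0$ you must compare with $\langle 0,0\rangle$. With that repair the argument closes as you say, since $\sfrac{1}{2}$ is the only value $\leq_{\rm I}$ both $0$ and $1$.

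On the inclusions, your route differs from the paper's and has a small but real gap. The pointwise argument via $\mathbf{s} \subseteq \mathbf{t}$ establishes $\mathbb{SS} \subseteq \mathbb{ST}$ and $\mathbb{TT} \subseteq \mathbb{ST}$ only when the strict-tolerant set is built over the \emph{same} scheme as the logic being included; but the Fact explicitly permits $\mathbb{ST}$ to be defined over a third, different BNM scheme, and your hedge (``provided we read the statement as comparing each to the corresponding $\mathbb{ST}$'') restricts the claim rather than proving it. The missing step is a one-liner: by Theorem \ref{thm:cl=st}, every $\mathbf{st}$-set over a BNM scheme equals $\mathbb{CL}$, so all the candidate $\mathbb{ST}$'s coincide and your same-scheme inclusion transfers. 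This is in fact how the paper handles the cross-scheme issue---it argues through classical validity and Theorem \ref{thm:cl=st} directly, bypassing the pointwise standard-containment argument altogether. Your approach buys a more self-contained, semantics-level proof of the inclusion; the paper's buys immediate scheme-independence. Add the appeal to Theorem \ref{thm:cl=st} and your proof is complete as stated.
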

\begin{proof}
For the first bullet, take any BNM-valuation such that $v(p)=\sfrac{1}{2}$ and $v(r)=0$. It's easy to notice that $v\nvDash_{\bf{tt}} p \wedge \neg p \Rightarrow r$ and $v\nvDash_{\bf{ss}} \Rightarrow p \vee \neg p $.\footnote{Notice that for every BNM scheme, if $v(p)=\sfrac{1}{2}$ then $v(\neg p)=\sfrac{1}{2}$, and if $v(p)=v(q)=\sfrac{1}{2}$, then $v(p\wedge q)=v(p\vee q)=\sfrac{1}{2}$.} However, both inferences are classically valid, and thus by Theorem \ref{thm:cl=st} they are in $\mathbb{ST}$.


As for the incomparability of $\mathbb{SS}$ and $\mathbb{TT}$, just note that, for any BNM scheme $\mathbf{X}$, $\not\models^{\mathbf{X}}_{\bf{tt}} p \wedge \neg p \Rightarrow r $, but $\models^{\mathbf{X}}_{\bf{ss}} p \wedge \neg p \Rightarrow r$, and $\not\models^{\mathbf{X}}_{\bf{ss}} \emptyset \Rightarrow p \lor \neg p$, but $\models^{\mathbf{X}}_{\bf{tt}} \emptyset \Rightarrow p \lor \neg p$.
\end{proof}

\begin{theorem}\label{th:ssunionttinclcl}
$\mathbb{SS} \cup \mathbb{TT} \subsetneq \mathbb{ST}=\mathbb{CL}.$

\end{theorem}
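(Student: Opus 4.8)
The plan is to split the statement into three independent claims and dispatch the two easy ones by invoking results already in hand, leaving the strict inequality as the only substantive task. The equality $\mathbb{ST}=\mathbb{CL}$ is immediate from Theorem \ref{thm:cl=st}. The inclusion $\mathbb{SS}\cup\mathbb{TT}\subseteq\mathbb{ST}$ is just the first bullet of Fact \ref{fct:subs}: from $\mathbb{SS}\subseteq\mathbb{ST}$ and $\mathbb{TT}\subseteq\mathbb{ST}$ we get $\mathbb{SS}\cup\mathbb{TT}\subseteq\mathbb{ST}$. Everything therefore reduces to exhibiting a single inference lying in $\mathbb{ST}=\mathbb{CL}$ but in neither $\mathbb{SS}$ nor $\mathbb{TT}$; note that a witness to strictness of a \emph{union} must fail \emph{both} disjuncts, so one inference has to defeat the $\mathbf{ss}$ and the $\mathbf{tt}$ standard simultaneously.

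To find such a witness I would reason about what each failure demands. Failing $\mathbf{ss}$ requires a valuation making every premise strictly true while the conclusion is non-strict, which calls for a \emph{gap} in the conclusion---an instance of excluded middle taking value $\sfrac{1}{2}$. Failing $\mathbf{tt}$ requires a valuation making every premise tolerantly true while the conclusion is strictly false, which calls for a \emph{glut} in the premise---a contradiction taking value $\sfrac{1}{2}$ yet counted as tolerantly true. The naive move of reusing the two separate inferences of Fact \ref{fct:subs} runs into a vacuity trap in each case: a lone premise $p\wedge\neg p$ is never strict, making the inference vacuously $\mathbf{ss}$-valid, and a lone conclusion $q\vee\neg q$ is never false, making it vacuously $\mathbf{tt}$-valid. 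The remedy is to splice a glutty disjunct into the premise and a gappy conjunct into the conclusion around a shared variable carrying the classical content, e.g.
\[(p\wedge\neg p)\vee r \;\Rightarrow\; (q\vee\neg q)\wedge r,\]
whose two sides are each classically equivalent to $r$.

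I would then verify the three requirements. Classically both sides reduce to $r$, so the inference is $r\Rightarrow r$ and hence lies in $\mathbb{CL}=\mathbb{ST}$. For the $\mathbf{ss}$-failure, take $v(p)=0$, $v(r)=1$, $v(q)=\sfrac{1}{2}$: the premise computes to $(0\wedge 1)\vee 1=1$ while the conclusion computes to $(\sfrac{1}{2}\vee\sfrac{1}{2})\wedge 1=\sfrac{1}{2}$, witnessing that the inference is not $\mathbf{ss}$-valid. For the $\mathbf{tt}$-failure, take $v(p)=\sfrac{1}{2}$, $v(q)=0$, $v(r)=0$: the premise computes to $(\sfrac{1}{2}\wedge\sfrac{1}{2})\vee 0=\sfrac{1}{2}\in\mathbf{t}$ while the conclusion computes to $(0\vee 1)\wedge 0=0\notin\mathbf{t}$, witnessing that the inference is not $\mathbf{tt}$-valid.

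The point I would stress, and which I expect to be the delicate part, is that every three-valued value invoked above is \emph{forced} for any BNM scheme by Boolean normality together with monotonicity: the footnote to Fact \ref{fct:subs} already records $\neg\sfrac{1}{2}=\sfrac{1}{2}$ and $\sfrac{1}{2}\wedge\sfrac{1}{2}=\sfrac{1}{2}\vee\sfrac{1}{2}=\sfrac{1}{2}$, and the same monotonicity argument pins down $\sfrac{1}{2}\vee 0=\sfrac{1}{2}$ and $\sfrac{1}{2}\wedge 1=\sfrac{1}{2}$, the remaining inputs being Boolean. Consequently the witness works in every BNM scheme and for every choice of the (possibly distinct) schemes defining $\mathbb{SS}$ and $\mathbb{TT}$, so the inference lies outside $\mathbb{SS}\cup\mathbb{TT}$ uniformly, giving $\mathbb{SS}\cup\mathbb{TT}\subsetneq\mathbb{ST}$. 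The real obstacle is exactly the design step of the second paragraph: one must fuse the gap and glut mechanisms into a single classically valid inference while evading both vacuity traps and relying only on values that are determined for \emph{all} BNM schemes---schemes such as Weak Kleene, where $\sfrac{1}{2}\wedge 0=\sfrac{1}{2}$ rather than $0$, rule out the more tempting conjunctive premises. Once a witness surviving all these constraints is in hand, the remaining verification is routine.
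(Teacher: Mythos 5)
Your proposal is correct and takes essentially the same route as the paper: the same decomposition (equality from Theorem \ref{thm:cl=st}, inclusion from Fact \ref{fct:subs}, strictness via a single witness), and your witness $(p\wedge\neg p)\vee r \Rightarrow (q\vee\neg q)\wedge r$ is, up to renaming of variables and the order of the disjuncts/conjuncts, exactly the paper's $p \vee (q \wedge \neg q) \Rightarrow p \wedge (r \vee \neg r)$, refuted by the corresponding valuations. The only difference is expository: you spell out that the intermediate values involved (e.g.\ $\sfrac{1}{2}\wedge 1 = \sfrac{1}{2}\vee 0 = \sfrac{1}{2}$) are forced in \emph{every} BNM scheme by monotonicity, a point the paper leaves to a footnote.
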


\begin{proof}
By Theorem \ref{thm:cl=st}, we know that $\mathbb{ST}=\mathbb{CL}$. The fact that $\mathbb{SS} \cup \mathbb{TT} \subseteq \mathbb{ST}$ is trivial given the previous fact. So, let us show that $\mathbb{SS} \cup \mathbb{TT} \neq \mathbb{ST}.$ In order to do so, let us consider $ p \vee (q \wedge \neg q) \Rightarrow p \wedge (r \vee \neg r)$. This inference is the witness of the claim. 

Note first that given any BNM scheme, there is a valuation $v$ such that $v \nvDash_{\bf{ss}} p \vee (q \wedge \neg q) \Rightarrow p \wedge (r \vee \neg r)$ (e.g.\ $v(p)=1,v(q)=0,v(r)=\sfrac{1}{2}$) and there is another valuation $v'$ such that $v'\nvDash_{\bf{tt}} p \vee (q \wedge \neg q) \Rightarrow p \wedge (r \vee \neg r)$ (e.g.\ $v'(p)=0,v'(r)=0,v'(q)=\sfrac{1}{2}$). However, $ \vDash_{\bf{st}} p \vee (q \wedge \neg q) \Rightarrow p \wedge (r \vee \neg r)$ since the inference is classically valid. 
\end{proof}

One may then ask whether the gap between the union of $\mathbb{SS}$ and $\mathbb{TT}$ logics and $\mathbb{ST}$ (i.e. the set of classical inferences) can be bridged. The following result shows that the transitive closure is the key to obtaining $\mathbb{ST}$ from the union of $\mathbb{SS}$ and $\mathbb{TT}$.

\begin{theorem}\label{th:st0} 
$$T(\mathbb{SS} \cup \mathbb{TT}) = \mathbb{ST}.$$ 
    
\end{theorem}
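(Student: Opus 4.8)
```latex
\noindent\textbf{Proof proposal.}
The plan is to prove the two inclusions separately. The inclusion $T(\mathbb{SS} \cup \mathbb{TT}) \subseteq \mathbb{ST}$ is the easy direction: I would first observe that $\mathbb{SS} \cup \mathbb{TT} \subseteq \mathbb{ST}$ by Fact \ref{fct:subs}, and then argue that $\mathbb{ST}$ is itself closed under transitivity. Indeed, by Theorem \ref{thm:cl=st} we have $\mathbb{ST} = \mathbb{CL}$, and $\mathbb{CL}$ is a Tarskian logic, hence satisfies property (T). Since $T$ is a closure operator (monotone), applying $T$ to both sides of $\mathbb{SS} \cup \mathbb{TT} \subseteq \mathbb{ST}$ gives $T(\mathbb{SS} \cup \mathbb{TT}) \subseteq T(\mathbb{ST}) = \mathbb{ST}$, where the last equality holds because $\mathbb{ST}$ already satisfies (T) and $T(\mathbb{ST})$ is the least such extension.

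The converse inclusion $\mathbb{ST} \subseteq T(\mathbb{SS} \cup \mathbb{TT})$ is where the real work lies. Here I would take an arbitrary classically valid inference $\Gamma \Rightarrow \phi$ (using $\mathbb{ST}=\mathbb{CL}$) and exhibit it as obtainable by one or more applications of transitivity from inferences that are already $\mathbf{ss}$-valid or $\mathbf{tt}$-valid. The natural strategy is to find a suitable ``intermediate'' set of formulas $\Delta$ such that $\Gamma \Rightarrow \delta$ holds under the $\mathbf{tt}$ (tolerant--tolerant) reading for each $\delta \in \Delta$, while $\Delta \Rightarrow \phi$ holds under the $\mathbf{ss}$ (strict--strict) reading; the transitivity rule then stitches these together to yield $\Gamma \Rightarrow \phi$ in $T(\mathbb{SS} \cup \mathbb{TT})$. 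The intuition matching the $\mathbf{st}$ standard is that $\mathbf{tt}$-validity moves us from ``strict premises'' to an intermediate ``tolerant'' layer, and $\mathbf{ss}$-validity moves from a ``strict'' intermediate layer to a ``strict'' conclusion; the challenge is that these two layers must be reconciled, which is precisely the role played by transitivity acting on a well-chosen $\Delta$.

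The main obstacle is constructing this intermediate $\Delta$ explicitly and verifying the two component validities hold for \emph{every} BNM scheme, including possibly different schemes for the $\mathbf{ss}$- and $\mathbf{tt}$-logics. A promising candidate is to let $\Delta$ consist of $\Gamma$ together with instances of excluded middle such as $r \lor \neg r$ for the relevant variables $r \in \operatorname{At}(\phi)$, exploiting that these are $\mathbf{tt}$-valid (as used in Fact \ref{fct:subs}) under any BNM scheme. One would then need to check that adding these tertium-non-datur formulas to the premises makes the target inference $\mathbf{ss}$-valid: the added disjunctions force the relevant atoms to take classical values whenever the premises are strictly satisfied, collapsing the three-valued evaluation to the classical one and thereby recovering $\mathbf{ss}$-validity of $\Delta \Rightarrow \phi$ from the classical validity of $\Gamma \Rightarrow \phi$. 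Carrying out this verification rigorously — in particular showing that the footnoted BNM facts ($v(p)=\tfrac12$ forces $v(\neg p)=\tfrac12$, etc.) suffice to eliminate the middle value once excluded-middle instances are assumed — is the delicate step, since it must be argued uniformly across all monotonic Boolean-normal schemes rather than merely for the Strong Kleene case.
```
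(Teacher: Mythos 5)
Your proposal matches the paper's proof essentially step for step: the easy inclusion via Fact \ref{fct:subs} together with $T(\mathbb{ST})=\mathbb{ST}$, and the hard inclusion via exactly the interpolant the paper uses, $\Delta = \Gamma \cup \lbrace p \lor \neg p : p \in \textup{At}(\phi)\rbrace$, with $\Gamma \Rightarrow \delta$ shown $\mathbf{tt}$-valid for each $\delta \in \Delta$ and $\Delta \Rightarrow \phi$ shown $\mathbf{ss}$-valid, stitched together by a single application of transitivity. The ``delicate step'' you flag is discharged in the paper just as you sketch it: monotonicity forces $v(p \lor \neg p) = \sfrac{1}{2} \neq 0$ whenever $v(p)=\sfrac{1}{2}$, strict satisfaction of the excluded-middle premises forces the atoms of $\phi$ (though not those of $\Gamma$) to take classical values, and Theorem \ref{thm:cl=st} transfers $\mathbf{st}$-validity to the scheme used for the $\mathbf{ss}$-logic, so that $v(\phi)\neq 0$ and hence, by Boolean normality, $v(\phi)=1$.
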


\begin{proof}
    ($\subseteq$) From Fact \ref{fct:subs}, it holds that $\mathbb{SS}\cup \mathbb{TT} \subseteq \mathbb{ST}$, so $T(\mathbb{SS}\cup \mathbb{TT})\subseteq T(\mathbb{ST})$ since $T$ is a closure operator. Now, $T(\mathbb{ST}) = \mathbb{ST}$, and therefore $T(\mathbb{SS}\cup \mathbb{TT})\subseteq \mathbb{ST}$.
    
    \medskip

   

    ($\supseteq$) Assume $\Gamma \Rightarrow \phi \in \mathbb{ST}$, that is $\models_{\bf{st}}^{\mathbf{X}} \Gamma \Rightarrow \phi$. We show in particular that there is $\Delta \neq \emptyset$ such that $\models_{\bf{tt}}^{\mathbf{X}'} \Gamma \Rightarrow \delta$ for all $\delta \in \Delta$ and $\models_{\bf{ss}}^{\mathbf{X}''} \Delta \Rightarrow \phi$ with $\mathbf{X}'$ and $\mathbf{X}''$ two BNM schemes possibly different from $\mathbf{X}$. Let $\Delta$ be as follows:

\begin{equation*}
\Delta \coloneqq \Gamma \cup \lbrace p \lor \neg p : p \in \textup{At}(\phi) \rbrace
\end{equation*}

\noindent
For all $\gamma \in \Gamma$, $\models_{\bf{tt}}^{\mathbf{X}'} \Gamma \Rightarrow \gamma$, since $\mathbb{TT}$ is reflexive. Moreover, 
$\models_{\bf{tt}}^{\mathbf{X}'} \Gamma \Rightarrow p \lor \neg p$ for all $p \in \textup{At}(\phi)$, since for any BNM valuation, $v(p \lor \neg p) \neq 0$. Turning to $\models_{\bf{ss}}^{\mathbf{X}''} \Delta \Rightarrow \phi$, assume $v(\delta)= 1$ for all $\delta \in \Delta$. By definition of $\Delta$, it follows that $v(p \lor \neg p)=1$ for all $p \in \textup{At}(\phi)$, and given that ${\mathbf{X}''}$ is a BNM scheme, $v(p) \neq \sfrac{1}{2}$ for all $p \in \textup{At}(\phi)$. In addition, $v(\gamma) =1$ for all $\gamma \in \Gamma$ and  $\models_{\bf{st}}^{\mathbf{X}} \Gamma \Rightarrow \phi$ by assumption. By Theorem \ref{thm:cl=st}, $\models_{\bf{st}}^{\mathbf{X}''} \Gamma \Rightarrow \phi$, so $v(\phi) \neq 0$. But $v(p) \neq \sfrac{1}{2}$ for all $p \in \textup{At}(\phi)$, thus $v(\phi) =1$, since $v$ is a BNM valuation. Now, $\Gamma \Rightarrow \delta \in \mathbb{TT}$ for all $\delta \in \Delta$ and $\Delta \Rightarrow \phi \in \mathbb{SS}$, meaning that $\Gamma \Rightarrow \delta \in \mathbb{SS} \cup \mathbb{TT}$ for all $\delta \in \Delta$ and $\Delta \Rightarrow \phi \in \mathbb{SS} \cup \mathbb{TT}$. Therefore, $\Gamma \Rightarrow \phi \in T(\mathbb{SS} \cup \mathbb{TT})$.

\end{proof}


Note that the previous results rely heavily on the absence of truth constants for the intermediate value in the language. If we introduce a constant for this value, say $\lambda$, the closure under transitivity of the union of the valid inferences from any $\mathbf{tt}$-and $\mathbf{ss}$-logic would result in a trivial set of inferences. To see this, just note that for all $\Gamma, \phi$, it holds that $\Gamma \Rightarrow \lambda \in \mathbb{TT}$ and $\lambda \Rightarrow \phi \in \mathbb{SS}$. As a result, $\Gamma \Rightarrow \phi \in T(\mathbb{SS} \cup \mathbb{TT})$ for any inference $\Gamma \Rightarrow \phi$, collapsing $\mathsf{ST}$ with the trivial logic, that is, the logic corresponding to the universal relation on $\mathcal{L}$.\footnote{For characterizations of $\mathsf{ST}$ in languages containing $\lambda$, see \cite{blomet2024sttsproductsum}.}

Let us now turn our attention to the case of $\bf{ts}$-logics. It is straightforward to notice that $\mathbb{TS}\subsetneq \mathbb{TT} \cap \mathbb{SS}$, since $\mathbb{TS}=\emptyset$ while $\mathbb{TT} \cap \mathbb{SS}\neq \emptyset$ (for instance $\phi \Rightarrow \phi \in \mathbb{TT} \cap \mathbb{SS}$, for any $\phi$). Thus to collapse $\mathbb{TT} \cap \mathbb{SS}$ with $\mathbb{TS}$ we need to erase all the inferences belonging to this set. The next result shows that the dual transitive closure operator does this job. 

\begin{theorem}\label{thm:tssum}
    \begin{align*}
        T^d(\mathbb{TT} \cap \mathbb{SS}) = \mathbb{TS}
    \end{align*}
\end{theorem}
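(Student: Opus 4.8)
The plan is to exploit the duality established in Proposition \ref{prp:duaTr} to transfer the already-proven result of Theorem \ref{th:st0} into its dual form, rather than constructing a direct semantic argument from scratch. The key observation is that $T^d(\mathbb{L}) = \overline{T(\overline{\mathbb{L}})}$, and that complement interacts with intersection and union via De Morgan: $\overline{\mathbb{TT} \cap \mathbb{SS}} = \overline{\mathbb{TT}} \cup \overline{\mathbb{SS}}$. So I would first rewrite the left-hand side as $T^d(\mathbb{TT} \cap \mathbb{SS}) = \overline{T(\overline{\mathbb{TT} \cap \mathbb{SS}})} = \overline{T(\overline{\mathbb{TT}} \cup \overline{\mathbb{SS}})}$. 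The goal is then to show this equals $\mathbb{TS}$, equivalently that $T(\overline{\mathbb{TT}} \cup \overline{\mathbb{SS}}) = \overline{\mathbb{TS}}$.

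The bridge connecting these complements to the standards themselves should come from the duality between the formula-standards $\mathbf{s}$ and $\mathbf{t}$ and between validity under one standard and invalidity under the ``reversed'' standard. Concretely, I expect a lemma (or a direct computation) establishing that the complement of the set of $\mathbf{tt}$-valid inferences relates to the $\mathbf{ss}$-valid inferences of a dual/complementary scheme, and similarly $\overline{\mathbb{SS}}$ to $\mathbb{TT}$, so that $\overline{\mathbb{TT}} \cup \overline{\mathbb{SS}}$ can be recast in a form isomorphic to the $\mathbb{SS} \cup \mathbb{TT}$ appearing in Theorem \ref{th:st0}. Since Theorem \ref{th:st0} gives $T(\mathbb{SS} \cup \mathbb{TT}) = \mathbb{ST}$, applying it in this dualized setting would yield $T(\overline{\mathbb{TT}} \cup \overline{\mathbb{SS}}) = \overline{\mathbb{TS}}$, recalling that $\mathbb{ST}$ and $\mathbb{TS}$ are themselves dual (indeed $\mathbb{TS} = \overline{\cdot}$ of a set complementary to $\mathbb{ST}$ under the appropriate value-flip). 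Taking complements on both sides then closes the argument via Proposition \ref{prp:duaTr}.

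Alternatively, and perhaps more cleanly given the machinery already in place, I would argue directly from Theorem \ref{thm:empty=st}, which states $\mathbb{TS} = \emptyset$ for every BNM scheme. Since $T^d$ is an interior operator, $T^d(\mathbb{TT} \cap \mathbb{SS}) \subseteq \mathbb{TT} \cap \mathbb{SS}$, so one inclusion reduces to showing $T^d(\mathbb{TT} \cap \mathbb{SS}) \subseteq \emptyset$, i.e.\ that the dual transitive closure deletes every inference. The dual transitivity condition says that a set $\mathbb{L}'$ is admissible only if whenever some nonempty $\Delta \Rightarrow \phi \notin \mathbb{L}'$ and all $\Gamma \Rightarrow \delta \notin \mathbb{L}'$, then $\Gamma \Rightarrow \phi \notin \mathbb{L}'$; $T^d$ takes the greatest such subset. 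I would show that for any candidate inference $\Gamma \Rightarrow \phi$ one can always find a witnessing $\Delta$ (mirroring the $\Delta \coloneqq \Gamma \cup \lbrace p \lor \neg p : p \in \textup{At}(\phi)\rbrace$ construction, appropriately dualized) forcing its deletion, so that no nonempty admissible subset survives.

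The main obstacle I anticipate is getting the \emph{dual} witnessing construction exactly right: in Theorem \ref{th:st0} the set $\Delta$ was built to sit between $\Gamma$ and $\phi$ so that $\Gamma \Rightarrow \delta$ holds tolerantly and $\Delta \Rightarrow \phi$ holds strictly, and the dual version must instead exhibit, for each inference to be erased, a decomposition certifying \emph{non}-membership that propagates correctly under the dual transitivity rule. Keeping the two possibly-distinct BNM schemes $\mathbf{X}'$ and $\mathbf{X}''$ straight while invoking Theorems \ref{thm:cl=st} and \ref{thm:empty=st} — and verifying that the Boolean-normality fact ($v(p) \neq \sfrac{1}{2}$ forcing classical behaviour, and $v(p \lor \neg p) \neq 0$) dualizes to the analogous fact about $v(p \land \neg p) \neq 1$ — is where the care is needed, but I expect no genuinely new idea beyond mirroring the previous proof through the complement.
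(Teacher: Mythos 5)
Your second, ``cleaner'' approach is indeed the paper's route: the $\supseteq$ direction is immediate from Theorem \ref{thm:empty=st} ($\mathbb{TS} = \emptyset$), and the work is to show that $T^d$ deletes every inference from $\mathbb{TT} \cap \mathbb{SS}$. But the step you flag as the ``main obstacle''---the dual witnessing construction---is exactly the content of the proof, and it is missing; worse, the mirror you hint at would fail if taken literally. Any witness $\Delta \supseteq \Gamma$ (dualizing $\Delta \coloneqq \Gamma \cup \lbrace p \lor \neg p : p \in \textup{At}(\phi)\rbrace$) is dead on arrival: deleting $\Gamma \Rightarrow \phi$ means showing $\Gamma \Rightarrow \phi \in T(\overline{\mathbb{TT} \cap \mathbb{SS}})$, which requires certificates $\Gamma \Rightarrow \delta$ and $\Delta \Rightarrow \phi$ lying \emph{outside} $\mathbb{TT} \cap \mathbb{SS}$ (or already derived as such), and the reflexive instances $\Gamma \Rightarrow \gamma$ for $\gamma \in \Gamma \subseteq \Delta$ are always \emph{in} $\mathbb{TT} \cap \mathbb{SS}$. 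The paper's witness is instead a single fresh variable: take $\Delta = \lbrace p \rbrace$ with $p \notin \textup{At}(\Gamma) \cup \textup{At}(\phi)$, which exists because $\Gamma$ is finite. Then $\Gamma \Rightarrow p \notin \mathbb{TT}$ (take $v(p) = 0$ and $v(q) = \sfrac{1}{2}$ for all other variables; by the BNM footnote fact every $\gamma \in \Gamma$ gets value $\sfrac{1}{2}$) and $p \Rightarrow \phi \notin \mathbb{SS}$ (take $v'(p) = 1$, $\sfrac{1}{2}$ elsewhere). One application of transitivity then puts $\Gamma \Rightarrow \phi$ in $T(\overline{\mathbb{TT} \cap \mathbb{SS}})$, i.e., outside $T^d(\mathbb{TT} \cap \mathbb{SS})$. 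No dualized fact about $v(p \wedge \neg p) \neq 1$ and no appeal to Theorem \ref{thm:cl=st} is needed.

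Your first approach has a more serious defect and should be abandoned. After rewriting $T^d(\mathbb{TT} \cap \mathbb{SS}) = \overline{T(\overline{\mathbb{TT}} \cup \overline{\mathbb{SS}})}$ (which is correct), you propose to recast $\overline{\mathbb{TT}}$ and $\overline{\mathbb{SS}}$ as validity sets of ``dual'' schemes so as to invoke Theorem \ref{th:st0}. But complements of validity sets are not validity sets of any standard over any scheme: they are not structural, not monotonic, and contain inferences like $p \Rightarrow q$ while omitting all reflexive instances, so Theorem \ref{th:st0} has no purchase on them. Relatedly, your parenthetical claim that $\mathbb{TS}$ is a complement of something complementary to $\mathbb{ST}$ is confused: $\mathbb{TS} = \emptyset$ whereas $\overline{\mathbb{ST}} = \overline{\mathbb{CL}} \neq \emptyset$; the $\mathbf{st}$/$\mathbf{ts}$ duality in this paper runs through the $T$/$T^d$ interdefinability of Proposition \ref{prp:duaTr}, not through set-theoretic complementation of the validity sets themselves. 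What your rewriting does correctly show is that the theorem is equivalent to the statement that $T(\overline{\mathbb{TT}} \cup \overline{\mathbb{SS}})$ is the set of \emph{all} inferences---and the fresh-variable argument above is precisely how one proves that.
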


\begin{proof}
($\subseteq$) Let $\Gamma \Rightarrow \phi \in T^d(\mathbb{TT} \cap \mathbb{SS})$. Then for all $\Delta \neq \emptyset$, $\Gamma \Rightarrow \delta \in \mathbb{TT} \cap \mathbb{SS}$ for some $\delta \in \Delta$ or $\Delta \Rightarrow \phi \in \mathbb{TT} \cap \mathbb{SS}$. Meaning that for all $\Delta \neq \emptyset$, $\models_{\mathbf{tt}} \Gamma \Rightarrow \delta$ for some $\delta \in \Delta$ or $\models_{\mathbf{ss}} \Delta \Rightarrow \phi$, and in particular that for some $p \not\in \textup{At}(\Gamma) \cup \textup{At}(\phi)$, $\models_{\mathbf{tt}} \Gamma \Rightarrow p$ or $\models_{\mathbf{ss}} p \Rightarrow \phi$. Since $\Gamma$ is finite, such a $p$ must exist. Now, $\models_{\mathbf{tt}} \Gamma \Rightarrow p$ is impossible, as for any BNM scheme, there exists a valuation $v$ such that $v(p) = 0$ and $v(q) = \sfrac{1}{2}$ for all $q \in \mathit{Var}$ with $q \neq p$, ensuring that $v \not\models_{\mathbf{tt}} \Gamma \Rightarrow p$. Likewise, $\models_{\mathbf{ss}} p \Rightarrow \phi$ is also impossible, since for any BNM scheme, there exists a valuation $v'$ such that $v'(p) = 1$ and $v'(q) = \sfrac{1}{2}$ for all $q \in \mathit{Var}$ with $q \neq p$, ensuring that $v \not\models_{\mathbf{ss}} p \Rightarrow \phi$. Therefore, $T^d(\mathbb{TT} \cap \mathbb{SS}) = \emptyset = \mathbb{TS}$.

\medskip
\noindent
 $(\supseteq)$
Trivial, since $\mathbb{TS}=\emptyset$.
\end{proof}

Having established how to obtain $\mathbb{ST}$ from $\mathbb{SS} \cup \mathbb{TT}$, and $\mathbb{TS}$ from $\mathbb{SS} \cap \mathbb{TT}$, we will now focus on the logics corresponding to these sets. First, note that all the results from the previous sections were stated for any BNM scheme. However, it is not necessary for the scheme used to define $\mathbb{SS}$ and $\mathbb{TT}$ to be the same. For example, we can consider the two well-known schemes from Figure \ref{WWKtruthtables} and Figure \ref{KKtruthtables}.

 \begin{figure}[ht]
\centering
{\small
\begin{tabular}{ccc}

\begin{tabular}{c|c}
         & $\neg$ \\ \hline
	$\t$ & $\f$ \\
	$\i$ & $\i$ \\
	$\f$ & $\t$ \\
\end{tabular}

			&
			
\begin{tabular}{c|cccccc}
    $\wedge$ & $\t$ & $\i$ & $\f$ \\ \hline
	$\t$ & $\t$ & $\i$ & $\f$ \\ 
	$\i$ & $\i$ & $\i$ & $\i$ \\  
	$\f$ & $\f$ & $\i$ & $\f$ \\  
\end{tabular}
			
			&
			
\begin{tabular}{c|cccccc}
    $\vee$ & $\t$ & $\i$ & $\f$ \\ \hline
	$\t$ & $\t$ & $\i$ & $\t$ \\ 
	$\i$ & $\i$ & $\i$ & $\i$ \\  
	$\f$ & $\t$ & $\i$ & $\f$ \\  
\end{tabular}
			
\end{tabular}
}
\caption{Weak Kleene truth tables}
\label{WWKtruthtables}
\end{figure}

\begin{figure}[ht]
\centering
{\small
\begin{tabular}{ccc}

\begin{tabular}{c|c}
         & $\neg$ \\ \hline
	$\t$ & $\f$ \\
	$\i$ & $\i$ \\
	$\f$ & $\t$ \\
\end{tabular}

			&
			
\begin{tabular}{c|cccccc}
    $\wedge$ & $\t$ & $\i$ & $\f$ \\ \hline
	$\t$ & $\t$ & $\i$ & $\f$ \\ 
	$\i$ & $\i$ & $\i$ & $\f$ \\  
	$\f$ & $\f$ & $\f$ & $\f$ \\  
\end{tabular}
			
			&
			
\begin{tabular}{c|cccccc}
    $\vee$ & $\t$ & $\i$ & $\f$ \\ \hline
	$\t$ & $\t$ & $\t$ & $\t$ \\ 
	$\i$ & $\t$ & $\i$ & $\i$ \\  
	$\f$ & $\t$ & $\i$ & $\f$ \\  
\end{tabular}
			
\end{tabular}
}
\caption{Strong Kleene truth tables}
\label{KKtruthtables}
\end{figure}

$\mathbb{SS}$ and $\mathbb{TT}$ defined over the Strong Kleene scheme induce the logics $\mathsf{K}_{3}$ and $\mathsf{LP}$, respectively. Their intersection is known as $\mathsf{S}_{3}$ or $\mathsf{KO}$. In the case of the logics defined over the Weak Kleene scheme, $\mathbb{SS}$ corresponds to $\mathsf{WK}$ and $\mathbb{TT}$ to $\mathsf{PWK}$.\footnote{To the best of our knowledge, the intersection of the two has not been studied yet.} Given these logics, Theorem \ref{th:st0} implies the following:
\begin{center}
 $T(\mathbb{K}_{3} \cup \mathbb{PWK})=\mathbb{ST}=\mathbb{CL}$\\
 $T^{d}(\mathbb{K}_{3} \cap \mathbb{PWK})=\emptyset$
\end{center}

 \noindent 
 The choice here of $\mathbb{K}_{3}$ and $\mathbb{PWK}$ was arbitrary. As mentioned earlier, provided that one logic is based on an $\mathbf{ss}$ standard and the other on a $\mathbf{tt}$ standard, the choice of a BNM scheme for each is inconsequential.

A distinctive feature of $\mathsf{K}_{3}$ and $\mathsf{WK}$, and of all the $\bf{ss}$-logics is their paracompleteness: the law of excluded middle $\emptyset\Rightarrow \phi \vee \neg \phi$ fails in each (the valuation that assigns the intermediate value to $\phi$ serves as a counterexample). On the other hand, $\mathsf{LP}$ and $\mathsf{PWK}$ and all the $\bf{tt}$-logics are paraconsistent: the rule of Explosion $\phi \wedge \neg \phi\Rightarrow \psi$ has a counterexample in each (the valuation that assigns the intermediate value to $\phi$ and false to $\psi$ serves as a counterexample). Given our results, classical logic is therefore obtained by operating over the union of a paracomplete and a paraconsistent logic.

\section{Additional remarks on the dual transitive closure operator}
\label{sec:aditional}
As proven in Section \ref{sct:TrDef}, the transitive closure operator and the dual transitive operator are interdefinable via the set-theoretical notion of absolute complement. However, one may ask how this duality manifests when applied to sets of inferences. To address this, we first need to better understand the effects of the dual transitive closure operator. From Section \ref{sct:TrDef}, we know that it is an interior operator, a notion dual to the notion of closure operator, which---unlike its dual---contracts a set rather than extends it. Could there, then, be a property analogous to transitivity that is either imposed or removed by the dual transitive closure operator?

The dual transitive closure of a set of inferences $\mathbb{L}$ is defined as the greatest subset of $\mathbb{L}$ whose complement is closed under transitivity. In light of this definition, it is easy to see that the dual transitive closure operator will not add new inferences to the set. In particular, provided that $\mathsf{L}$ is non-trivial, 
the dual transitive closure operator will remove some instances of reflexivity, turning $\mathbb{L}$ into a non-reflexive relation.\footnote{In this context we take a logic $\mathsf{L}$ to be \textit{trivial} if $\phi \Rightarrow \psi \in \mathbb{L}$ for every $\phi, \psi$.}


\begin{fact}
Let $\mathsf{L}$ be structural, 
\[\begin{array}{ccc}
\text{If } \mathsf{L} \ \text{is non-trivial}, & \text{then} & T^d(\mathbb{L}) \ \text{is non-reflexive}.
\end{array}\]
\end{fact}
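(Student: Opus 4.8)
The plan is to prove the contrapositive: assuming $T^d(\mathbb{L})$ is reflexive, I will show that $\mathsf{L}$ must be trivial. So suppose $\phi \Rightarrow \phi \in T^d(\mathbb{L})$ for every $\phi$. By the definition of the dual transitive closure, $T^d(\mathbb{L})$ is the greatest subset of $\mathbb{L}$ whose complement is closed under transitivity; equivalently, by Proposition \ref{prp:duaTr}, $\overline{T^d(\mathbb{L})} = T(\overline{\mathbb{L}})$. The key observation is that reflexivity of $T^d(\mathbb{L})$ means none of the reflexive inferences $\phi \Rightarrow \phi$ were removed, i.e.\ none of them lie in $T(\overline{\mathbb{L}})$. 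I want to leverage this, together with structurality, to force every inference into $T^d(\mathbb{L}) \subseteq \mathbb{L}$.

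The key steps, in order, are as follows. First I would fix arbitrary $\psi, \chi$ and aim to show $\psi \Rightarrow \chi \in \mathbb{L}$. The idea is to use reflexivity at an intermediate formula as a transitivity hinge inside the \emph{complement}: since $T^d(\mathbb{L})$ is reflexive, for a suitable formula $\theta$ we have $\theta \Rightarrow \theta \notin T(\overline{\mathbb{L}})$, and since $T(\overline{\mathbb{L}})$ is closed under transitivity, this blocks the derivation of $\theta \Rightarrow \theta$ from antecedents lying in $T(\overline{\mathbb{L}})$. Concretely, if one could place $\psi \Rightarrow \theta$ and $\theta \Rightarrow \chi$ (or their relevant analogues) in the complement, transitivity would drag in $\psi \Rightarrow \chi$ or a reflexive instance, and pressing on the reflexive instance yields a contradiction. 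The second step is to exploit structurality: since $\mathsf{L}$ (hence $T^d(\mathbb{L})$, which inherits closure under substitution when $\mathbb{L}$ is structural) is structural, establishing the result for one suitable witness formula propagates to all formulas via substitution. The third step assembles these to conclude that the only way to keep every $\phi \Rightarrow \phi$ inside $T^d(\mathbb{L})$—i.e.\ outside $T(\overline{\mathbb{L}})$—is for $\overline{\mathbb{L}}$ to be so sparse that its transitive closure omits all reflexive inferences, which via the structural propagation forces $\mathbb{L}$ to contain every $\psi \Rightarrow \chi$, i.e.\ triviality.

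The main obstacle, I expect, is getting the transitivity hinge to fire in the right direction. Transitivity in $T(\overline{\mathbb{L}})$ lets me conclude $\Gamma \Rightarrow \phi$ from a nonempty $\Delta \Rightarrow \phi$ and all $\Gamma \Rightarrow \delta$; to derive a \emph{reflexive} instance $\theta \Rightarrow \theta$ in the complement I must arrange the intermediate set $\Delta$ and the premise set so that the cut formula is exactly $\theta$ on both sides. The delicate point is ensuring that whenever $\mathsf{L}$ is non-trivial there genuinely exists such a configuration witnessing some $\theta \Rightarrow \theta \in T(\overline{\mathbb{L}})$, i.e.\ that non-triviality of $\mathbb{L}$ forces $\overline{\mathbb{L}}$ to contain a transitivity-chainable pair through a common formula. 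I would handle this by choosing, given any $\psi \Rightarrow \chi \notin \mathbb{L}$, a fresh variable $p$ not occurring in $\psi$ or $\chi$ and using structurality to build both $\psi \Rightarrow p$ and $p \Rightarrow \chi$ in $\overline{\mathbb{L}}$, so that $\Delta = \{p\}$ serves as the hinge and transitivity yields a reflexive instance $p \Rightarrow p \in T(\overline{\mathbb{L}})$, contradicting reflexivity of $T^d(\mathbb{L})$. Verifying that this freshness-and-substitution construction is always available is the crux; once it is, the contradiction closes the argument.
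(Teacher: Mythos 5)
Your overall strategy (contrapositive, working in the complement via $\overline{T^d(\mathbb{L})} = T(\overline{\mathbb{L}})$) is sound, but the concrete witness configuration in your final paragraph does not do what you claim, and that configuration was the crux you yourself flagged. From $\psi \Rightarrow \chi \notin \mathbb{L}$ and a fresh variable $p$, your substitution argument does correctly place $\psi \Rightarrow p$ and $p \Rightarrow \chi$ in $\overline{\mathbb{L}}$ (if either were in $\mathbb{L}$, substituting $\chi$, respectively $\psi$, for the fresh $p$ would put $\psi \Rightarrow \chi$ in $\mathbb{L}$). However, applying transitivity with hinge $\Delta = \lbrace p \rbrace$ to this pair yields $\psi \Rightarrow \chi \in T(\overline{\mathbb{L}})$ --- \emph{not} the reflexive instance $p \Rightarrow p$: the cut formula sits as conclusion of one inference and premise of the other, so the chained conclusion has $\psi$ on the left and $\chi$ on the right. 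And $\psi \Rightarrow \chi \in T(\overline{\mathbb{L}})$ is uninformative, since $\psi \Rightarrow \chi$ was already outside $\mathbb{L} \supseteq T^d(\mathbb{L})$, while reflexivity of $T^d(\mathbb{L})$ constrains only inferences of the form $\phi \Rightarrow \phi$. To manufacture $p \Rightarrow p$ in $T(\overline{\mathbb{L}})$ you need $p$ at \emph{both} ends of the chain, i.e.\ some $\theta$ with both $p \Rightarrow \theta$ and $\theta \Rightarrow p$ in $T(\overline{\mathbb{L}})$, and your construction supplies no such $\theta$.

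The repair is small and lands you on the paper's own argument. Take two distinct variables $p, q$. If $p \Rightarrow q \in \mathbb{L}$, structurality with $\sigma(p) = \psi$, $\sigma(q) = \chi$ gives $\psi \Rightarrow \chi \in \mathbb{L}$, a contradiction; symmetrically $q \Rightarrow p \notin \mathbb{L}$. Hence $p \Rightarrow q,\, q \Rightarrow p \in \overline{\mathbb{L}} \subseteq T(\overline{\mathbb{L}})$, and transitivity with $\Delta = \lbrace q \rbrace$ yields $p \Rightarrow p \in T(\overline{\mathbb{L}}) = \overline{T^d(\mathbb{L})}$, contradicting reflexivity of $T^d(\mathbb{L})$. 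Note also that this makes the freshness bookkeeping and the auxiliary inferences $\psi \Rightarrow p$, $p \Rightarrow \chi$ unnecessary. The paper runs the same exchange directly inside $T^d(\mathbb{L})$ rather than in the complement: from $p \Rightarrow p \in T^d(\mathbb{L})$, the defining condition of $T^d$ (read in contrapositive, with $\Delta = \lbrace q \rbrace$) forces $p \Rightarrow q \in T^d(\mathbb{L}) \subseteq \mathbb{L}$ or $q \Rightarrow p \in \mathbb{L}$, and structurality then makes $\mathsf{L}$ trivial; your incidental claim that $T^d(\mathbb{L})$ inherits structurality from $\mathbb{L}$ is likewise not needed, since structurality is only ever applied to $\mathbb{L}$ itself.
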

\begin{proof}
    Assume that $T^d(\mathbb{L})$ is reflexive. Then for all $\phi \in \mathcal{L}$, $\phi \Rightarrow\phi \in T^d(\mathbb{L})$. So, $p \Rightarrow p \in T^d(\mathbb{L})$ for some arbitrary $p$, and thus, for all $\Delta \neq \emptyset$, $p \Rightarrow \delta \in T^d(\mathbb{L}) \subseteq \mathbb{L}$ for some $\delta \in \Delta$ or $ \Delta \Rightarrow\ p \in T^d(\mathbb{L}) \subseteq \mathbb{L}$. This means in particular that $p \Rightarrow q \in \mathbb{L}$ or $q \Rightarrow p \in \mathbb{L}$ for some arbitrary $q$. In both cases $r \Rightarrow s \in \mathbb{L}$ for two different variables $r$ and $s$, 
    which, by structurality, shows that $\mathsf{L}$ is trivial.


    \end{proof}

We subsequently have two dual operators---the transitive closure operator and the dual transitive operator---turning any non-trivial set of inferences into a transitive one for the first, and into a non-reflexive one for the second. 

Although a relation can only be reflexive in one way, it can fail to be reflexive in numerous ways, depending on which instances of reflexivity are absent. The dual transitive closure only preserves the elements of $\mathbb{L}$ that cannot be obtained by closing its complement under transitivity. As we will see, this set corresponds precisely to the set of \textit{theorems} and \textit{antitheorems} of $\mathsf{L}$.

\begin{Definition}[Theorem and antitheorem]
Given a logic $\mathsf{L}$, a formula $\phi \in \mathcal{L}$ is said to be a \textit{theorem} of $\mathsf{L}$ if $v \models_{\mathsf{L}} \phi$ for all valuations $v$, and an \textit{antitheorem} of $\mathsf{L}$ if $v \not\models_{\mathsf{L}} \phi$ for all valuations $v$.
\end{Definition}

\noindent
By extension, a set of formulas $\Gamma$ is said to be a theorem if for all $v$ and all $\gamma \in \Gamma$, it holds that $v \models_{\mathsf{L}} \gamma$. It is said to be an antitheorem if for all $v$ there is $\gamma \in \Gamma$ such that $v \not\models_{\mathsf{L}} \gamma$.

The notions of theorem and antitheorem can be formally defined in several ways. Some of these definitions are provided in the next fact. We will find these equivalent formulations useful later.

\begin{fact}\label{fct:ath2}
    Let $\mathsf{L}$ be a logic. The following are equivalent:
    \begin{enumerate}[label=(\roman*), align=left]
    \setlength\itemsep{0pt}
        \item\label{Ati2s} $\Gamma$ is an antitheorem of $\mathsf{L}$.
        \item\label{Ati4s} $\models_{\mathsf{L}} \Gamma \Rightarrow \phi$ for all $\phi \in \mathcal{L}$.
        \item\label{Ati5s} $\models_{\mathsf{L}} \Gamma \Rightarrow p$ for some $p \not\in \textup{At}(\Gamma)$.
    \end{enumerate}
\end{fact}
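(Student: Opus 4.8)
The plan is to prove the equivalence of the three characterizations of an antitheorem by establishing a cycle of implications, $\ref{Ati2s} \Rightarrow \ref{Ati4s} \Rightarrow \ref{Ati5s} \Rightarrow \ref{Ati2s}$, which is the most economical route since each step is fairly direct once unpacked.

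\textbf{From \ref{Ati2s} to \ref{Ati4s}.} Suppose $\Gamma$ is an antitheorem, so for every valuation $v$ there is some $\gamma \in \Gamma$ with $v \not\models_{\mathsf{L}} \gamma$. Fix an arbitrary $\phi \in \mathcal{L}$ and any valuation $v$. To check $v \models_{\mathsf{L}} \Gamma \Rightarrow \phi$ we only need the implication ``if $v$ satisfies every $\gamma \in \Gamma$, then $v \models_{\mathsf{L}} \phi$'' to hold; but by hypothesis the antecedent fails for every $v$, so the conditional is vacuously satisfied. Since $v$ and $\phi$ were arbitrary, $\models_{\mathsf{L}} \Gamma \Rightarrow \phi$ for all $\phi$.

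\textbf{From \ref{Ati4s} to \ref{Ati5s}.} This is immediate: since $\Gamma$ is finite, $\textup{At}(\Gamma)$ is finite, and because $\textit{Var}$ is denumerably infinite there is some variable $p \notin \textup{At}(\Gamma)$. Instantiating the universally quantified \ref{Ati4s} at this particular $p$ gives $\models_{\mathsf{L}} \Gamma \Rightarrow p$, which is exactly \ref{Ati5s}.

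\textbf{From \ref{Ati5s} to \ref{Ati2s}.} This is the step where I expect the real work to lie, because it requires extracting semantic information about $\Gamma$ from the validity of $\Gamma \Rightarrow p$ for a \emph{fresh} variable $p$. Suppose $\models_{\mathsf{L}} \Gamma \Rightarrow p$ with $p \notin \textup{At}(\Gamma)$, and suppose toward contradiction that $\Gamma$ is not an antitheorem, i.e.\ there is a valuation $v$ satisfying every $\gamma \in \Gamma$. The idea is to modify $v$ only at $p$ to falsify the conclusion while preserving the premises: since $p$ does not occur in any $\gamma \in \Gamma$ and the valuations respect a (truth-functional) scheme, the value each $\gamma$ receives does not depend on $v(p)$, so I can define $v'$ agreeing with $v$ everywhere except setting $v'(p)$ to a value outside the relevant conclusion-standard $\mathbf{y}$ (which is possible because $\mathbf{y} \subsetneq \mathcal{V}$). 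Then $v'$ still satisfies all of $\Gamma$ but $v' \not\models_{\mathsf{L}} p$, contradicting $\models_{\mathsf{L}} \Gamma \Rightarrow p$. Hence no such $v$ exists, which is precisely \ref{Ati2s}. The main obstacle is making the locality argument rigorous---namely that changing $v$ at a single fresh variable leaves the truth-values of all premises untouched---which hinges on the fact that valuations respect a scheme and $p \notin \textup{At}(\Gamma)$; I would want to state this substitution/locality lemma cleanly, noting that it relies on $\mathbf{y} \neq \mathcal{V}$ so that an ``unsatisfying'' value for $p$ genuinely exists.
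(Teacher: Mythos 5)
Your proposal is correct and takes essentially the same route as the paper: the paper likewise dismisses (i)$\Rightarrow$(ii) and (ii)$\Rightarrow$(iii) as trivial and concentrates on (iii)$\Rightarrow$(i), arguing exactly as you do by taking a valuation $v$ satisfying all of $\Gamma$ and modifying it at the fresh variable $p$ so that $p$ is no longer satisfied while the premises are untouched. If anything, your write-up is slightly more careful than the paper's, since you make explicit the locality assumption (truth-functionality together with $p \notin \textup{At}(\Gamma)$) and the need for a value outside the conclusion-standard, both of which the paper uses tacitly.
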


\begin{proof}
    The only non-trivial step is the one from \ref{Ati5s} to \ref{Ati2s}. So, assume that $\Gamma$ is not an antitheorem. It follows that there is a $v$ such that $v \models_{\mathsf{L}} \gamma$ for all $\gamma \in \Gamma$. Let $p \not\in \textup{At}(\Gamma)$ and assume that $v'$ is exactly like $v$ except that if $v\models_{\mathsf{L}} p$, $v'\not\models_{\mathsf{L}} p$. Then $v \models_{\mathsf{L}} \gamma$ for all $\gamma \in \Gamma$ since $p \not\in \textup{At}(\Gamma)$ and hence $\not\models_{\mathsf{L}} \Gamma \Rightarrow p$ for all $p \not\in \textup{At}(\Gamma)$. 
\end{proof}

\begin{fact}\label{fct:th2}
    Let $\mathsf{L}$ be a logic. The following are equivalent:
    \begin{enumerate}[label=(\roman*), align=left]
    \setlength\itemsep{0pt}
        \item\label{Ati2sbis} $\phi$ is a theorem of $\mathsf{L}$.
        \item\label{Ati4sbis} $\models_{\mathsf{L}} \Gamma \Rightarrow \phi$ for all $\Gamma \subseteq \mathcal{L}$.
        \item\label{Ati5sbis} $\models_{\mathsf{L}} p \Rightarrow \phi$ for some $p \not\in \textup{At}(\phi)$.
    \end{enumerate}
\end{fact}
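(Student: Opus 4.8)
The plan is to prove the chain of equivalences \ref{Ati2sbis} $\Rightarrow$ \ref{Ati4sbis} $\Rightarrow$ \ref{Ati5sbis} $\Rightarrow$ \ref{Ati2sbis}, mirroring the structure of the antitheorem case in Fact \ref{fct:ath2}, where two of the three implications are essentially immediate and a single one carries the real content. I would first observe that \ref{Ati4sbis} $\Rightarrow$ \ref{Ati5sbis} is trivial: if $\models_{\mathsf{L}} \Gamma \Rightarrow \phi$ holds for \emph{all} $\Gamma$, then in particular it holds for $\Gamma = \lbrace p \rbrace$ where $p$ is chosen to avoid $\textup{At}(\phi)$. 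For \ref{Ati2sbis} $\Rightarrow$ \ref{Ati4sbis}, I would unfold the definition of theorem: if $v \models_{\mathsf{L}} \phi$ for every valuation $v$, then the consequent of the satisfaction condition for $\Gamma \Rightarrow \phi$ holds unconditionally, so every $v$ satisfies $\Gamma \Rightarrow \phi$ regardless of $\Gamma$, giving validity for all $\Gamma$.

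The only implication requiring genuine argument is \ref{Ati5sbis} $\Rightarrow$ \ref{Ati2sbis}, and I would prove it by contraposition exactly as in the dual fact. Assume $\phi$ is not a theorem, so there is a valuation $v$ with $v \not\models_{\mathsf{L}} \phi$. Given any $p \not\in \textup{At}(\phi)$, I would build a valuation $v'$ agreeing with $v$ everywhere except possibly at $p$, arranged so that $v' \models_{\mathsf{L}} p$ (if $v$ already satisfies $p$ take $v' = v$; otherwise flip the value at $p$). Since $p \not\in \textup{At}(\phi)$, changing the value at $p$ does not affect the value of $\phi$, so $v' \not\models_{\mathsf{L}} \phi$ still holds. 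Then $v'$ witnesses $\not\models_{\mathsf{L}} p \Rightarrow \phi$, and since $p$ was an arbitrary variable outside $\textup{At}(\phi)$, we conclude that $\models_{\mathsf{L}} p \Rightarrow \phi$ fails for every such $p$, which is the negation of \ref{Ati5sbis}.

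The one subtlety to keep in mind is the locality assumption implicit in the phrase ``$v'$ is exactly like $v$ except at $p$'' used in Fact \ref{fct:ath2}: this step presupposes that the value assigned to $\phi$ depends only on the variables in $\textup{At}(\phi)$, which is guaranteed because the logics under consideration respect a scheme $\mathbf{X}$ and so are truth-functional. I expect this to be the main (and only real) obstacle, in the sense that it is the place where one must be careful that modifying $v$ at $p$ leaves $v(\phi)$ untouched; everything else is bookkeeping. Since $p \notin \textup{At}(\phi)$, truth-functionality ensures $v'(\phi) = v(\phi)$, so the argument goes through cleanly, and the proof can reasonably be compressed to the single nontrivial implication just as the authors do for Fact \ref{fct:ath2}.
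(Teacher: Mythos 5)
Your proof is correct and takes essentially the same approach as the paper, which simply states that the proof is similar to that of Fact \ref{fct:ath2}: your contraposition argument for \ref{Ati5sbis} $\Rightarrow$ \ref{Ati2sbis}, modifying $v$ only at $p \notin \textup{At}(\phi)$ so that $v' \models_{\mathsf{L}} p$ while $v'(\phi) = v(\phi)$, is exactly the intended dualization. Your remark that truth-functionality (respecting a scheme) underwrites the locality step is a fair clarification of a point the paper leaves implicit.
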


\begin{proof}
    The proof is similar to the proof of Fact \ref{fct:ath2}.
\end{proof}

\noindent
Given these two facts, we will identify the set of theorems of a logic with the set of all valid inferences of the form $\Gamma \Rightarrow \phi$, where $\phi$ is a theorem. Similarly, the set of antitheorems of a logic will be identified with the set of inferences of the form $\Gamma \Rightarrow \phi$, where $\Gamma$ is an antitheorem. For any logic $\mathsf{L}$, we denote the set of its antitheorems and theorems, understood in this broader sense, by $\mathbb{L}^{\star}$. Specifically, we define
\[\mathbb{L}^{\star} \coloneqq \lbrace \Gamma \Rightarrow \phi : \Gamma \Rightarrow \psi \in \mathbb{L} \text{ for all } \psi \ \textup{or} \ \Delta \Rightarrow \phi \in \mathbb{L} \text{ for all } \Delta \rbrace.\]

The next proposition thus expresses that for a given logic $\mathsf{L}$, the operator $T^d$ will select only the theorems and antitheorems of $\mathsf{L}$.

\begin{proposition}\label{prop:atth}
For any logic $\mathsf{L}$,
    \[T^d(\mathbb{L})= \mathbb{L}^{\star}. \]
\end{proposition}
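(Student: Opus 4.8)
The plan is to prove the set equality $T^d(\mathbb{L}) = \mathbb{L}^{\star}$ by establishing both inclusions separately, exploiting the definition of $T^d$ as the greatest subset of $\mathbb{L}$ whose complement is closed under transitivity, together with the equivalent characterizations of theorems and antitheorems given in Facts \ref{fct:ath2} and \ref{fct:th2}.

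For the inclusion $\mathbb{L}^{\star} \subseteq T^d(\mathbb{L})$, I would first verify that $\mathbb{L}^{\star} \subseteq \mathbb{L}$, which is immediate from the definition of $\mathbb{L}^{\star}$. Since $T^d(\mathbb{L})$ is the \emph{greatest} subset of $\mathbb{L}$ whose complement satisfies the transitivity-style closure condition $P$, it suffices to show that $\overline{\mathbb{L}^{\star}}$ is closed under transitivity, i.e.\ that $P(\overline{\mathbb{L}^{\star}})$ holds. Concretely, I would assume that $\Delta \Rightarrow \phi \notin \mathbb{L}^{\star}$ for some $\Delta \neq \emptyset$ and that $\Gamma \Rightarrow \delta \notin \mathbb{L}^{\star}$ for all $\delta \in \Delta$, and aim to conclude $\Gamma \Rightarrow \phi \notin \mathbb{L}^{\star}$. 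Arguing by contraposition, if $\Gamma \Rightarrow \phi \in \mathbb{L}^{\star}$, then either $\Gamma$ is an antitheorem or $\phi$ is a theorem. If $\Gamma$ is an antitheorem, then by Fact \ref{fct:ath2} every inference with premises $\Gamma$ is valid, so each $\Gamma \Rightarrow \delta \in \mathbb{L}$ with $\Gamma$ an antitheorem, placing each $\Gamma \Rightarrow \delta$ in $\mathbb{L}^{\star}$, a contradiction. If $\phi$ is a theorem, then by Fact \ref{fct:th2} the inference $\Delta \Rightarrow \phi$ has a theorem conclusion, so $\Delta \Rightarrow \phi \in \mathbb{L}^{\star}$, again a contradiction. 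This handles both disjuncts.

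For the reverse inclusion $T^d(\mathbb{L}) \subseteq \mathbb{L}^{\star}$, I would take an arbitrary $\Gamma \Rightarrow \phi \in T^d(\mathbb{L})$ and show that $\Gamma$ is an antitheorem or $\phi$ is a theorem. Since $T^d(\mathbb{L}) \subseteq \mathbb{L}$ and its complement is closed under transitivity, I can instantiate the closure condition: for the chosen $\Gamma \Rightarrow \phi \in T^d(\mathbb{L})$, consider cleverly chosen witness sets $\Delta$. The natural move, mirroring the proof strategy of Theorem \ref{thm:tssum}, is to pick a fresh variable $p \notin \textup{At}(\Gamma) \cup \textup{At}(\phi)$ and take $\Delta = \lbrace p \rbrace$. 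Since $\Gamma \Rightarrow \phi \in T^d(\mathbb{L})$, the contrapositive of the defining condition $P(\overline{T^d(\mathbb{L})})$ forces that $\Gamma \Rightarrow p \in T^d(\mathbb{L}) \subseteq \mathbb{L}$ or $p \Rightarrow \phi \in T^d(\mathbb{L}) \subseteq \mathbb{L}$. By Fact \ref{fct:ath2}\ref{Ati5s}, the first disjunct with $p \notin \textup{At}(\Gamma)$ says exactly that $\Gamma$ is an antitheorem; by Fact \ref{fct:th2}\ref{Ati5sbis}, the second with $p \notin \textup{At}(\phi)$ says exactly that $\phi$ is a theorem. Either way, $\Gamma \Rightarrow \phi \in \mathbb{L}^{\star}$.

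The main obstacle I anticipate is the reverse inclusion, and specifically the correct application of the defining property of $T^d$. The definition says $T^d(\mathbb{L})$ is the greatest subset whose complement is closed under the transitivity condition, so the usable fact is that $\overline{T^d(\mathbb{L})}$ satisfies $P$; I must take its contrapositive carefully to extract information about elements \emph{inside} $T^d(\mathbb{L})$. The subtle point is ensuring that the fresh variable $p$ genuinely lies outside $\textup{At}(\Gamma) \cup \textup{At}(\phi)$ so that Facts \ref{fct:ath2} and \ref{fct:th2} apply, which is unproblematic since the language has denumerably many variables while $\Gamma$ is finite and $\phi$ is a single formula. I would also double-check that the membership claims $\Gamma \Rightarrow p \in T^d(\mathbb{L})$ (resp.\ $p \Rightarrow \phi \in T^d(\mathbb{L})$) imply membership in $\mathbb{L}$, which holds because $T^d$ is an interior operator and hence $T^d(\mathbb{L}) \subseteq \mathbb{L}$, letting us feed these validities into the semantic characterizations.
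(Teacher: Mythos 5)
Your proof is correct, and on one of the two inclusions it takes a genuinely different route from the paper. For $T^d(\mathbb{L}) \subseteq \mathbb{L}^{\star}$ your argument is essentially the paper's in different clothing: the paper argues contrapositively, using the finiteness of $\textup{At}(\Gamma \cup \lbrace\phi\rbrace)$ to find a fresh $p$ with $\Gamma \Rightarrow p,\ p \Rightarrow \phi \in \overline{\mathbb{L}}$, so that $\Gamma \Rightarrow \phi \in T(\overline{\mathbb{L}}) = \overline{T^d(\mathbb{L})}$ by Proposition \ref{prp:duaTr}; your instantiation of the contrapositive of the defining condition of $T^d(\mathbb{L})$ with $\Delta = \lbrace p \rbrace$ is the same fresh-variable cut, read directly off the definition rather than routed through the complement and the duality proposition. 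The real divergence is in $\mathbb{L}^{\star} \subseteq T^d(\mathbb{L})$: the paper fixes an antitheorem $\Gamma$, supposes $\Gamma \Rightarrow \chi \in T(\overline{\mathbb{L}})$ for contradiction, and shows that $\mathbb{X} = T(\overline{\mathbb{L}}) - \lbrace \Gamma \Rightarrow \psi : \psi \in \mathcal{L} \rbrace$ is still a transitive superset of $\overline{\mathbb{L}}$, contradicting the leastness of $T(\overline{\mathbb{L}})$ (with the theorem case declared ``similar''). You instead verify once and for all that $\overline{\mathbb{L}^{\star}}$ is closed under transitivity---a purely definitional check, since an antitheorem premise set $\Gamma$ puts every $\Gamma \Rightarrow \delta$ into $\mathbb{L}^{\star}$ and a theorem conclusion $\phi$ puts every $\Delta \Rightarrow \phi$ into $\mathbb{L}^{\star}$, so neither disjunct even needs Facts \ref{fct:ath2} and \ref{fct:th2} here---and then invoke the maximality clause in the definition of $T^d$ together with the trivial $\mathbb{L}^{\star} \subseteq \mathbb{L}$. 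This buys a shorter, symmetric treatment of both disjuncts and avoids the paper's set subtraction and minimality contradiction; the one point worth making explicit is that appealing to ``the greatest subset of $\mathbb{L}$ whose complement satisfies $P$'' presupposes such a greatest element exists, which holds because the family is closed under arbitrary unions (the complement of a union is an intersection of transitively closed sets, which is transitively closed)---the paper sidesteps this by working with $\overline{T(\overline{\mathbb{L}})}$ throughout, but your membership-plus-maximality step is legitimate given the paper's own definition of $T^d$.
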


\begin{proof}
    ($\subseteq$) Assume $\Gamma \Rightarrow \phi \not\in \mathbb{L}^{\star}$. Then $\not\models_{\mathsf{L}} \Gamma \Rightarrow p$ and $\not\models_{\mathsf{L}} p \Rightarrow \phi$ for all $p \not\in \textup{At}(\Gamma \cup \lbrace\phi\rbrace)$ by facts \ref{fct:ath2} and \ref{fct:th2}. Hence, there is $p \not\in \textup{At}(\Gamma \cup \lbrace\phi\rbrace)$ such that $\Gamma \Rightarrow p \in \overline{\mathbb{L}}$ and $p \Rightarrow \phi \in \overline{\mathbb{L}}$ since $\textup{At}(\Gamma \cup \lbrace\phi\rbrace)$ finite, and thus $\Gamma \Rightarrow \phi \in T(\overline{\mathbb{L}})$. So, $\Gamma \Rightarrow \phi \not\in \overline{T(\overline{\mathbb{L}})} = T^d(\mathbb{L})$ by Proposition \ref{prp:duaTr}.

    ($\supseteq$) Assume $\Gamma \Rightarrow \phi \in \mathbb{L}^{\star}$. Then $\Gamma$ is an antitheorem or $\phi$ is a theorem. Assume the former. Then $\Gamma\Rightarrow \psi \in \mathbb{L}$ for all $\psi \in \mathcal{L}$. Assume further that $\Gamma \Rightarrow \chi \in T(\overline{\mathbb{L}})$ for some $\chi$ for the sake of contradiction. Consider the set $\mathbb{X} = T(\overline{\mathbb{L}}) - \lbrace \Gamma \Rightarrow \psi : \psi \in \mathcal{L} \rbrace$. Clearly $\overline{\mathbb{L}} \subseteq \mathbb{X} \subset T(\overline{\mathbb{L}})$ since $\Gamma\Rightarrow \psi \not\in \overline{\mathbb{L}}$ for all $\psi$. Now, assume that for all $\Sigma \subseteq \mathcal{L}$ and all $\psi \in \mathcal{L}$, there is $\Delta \neq \emptyset$ such that $\Sigma \Rightarrow \delta \in \mathbb{X}$ for all $\delta \in \Delta$ and $\Delta \Rightarrow \psi \in \mathbb{X}$. The case is impossible for $\Sigma = \Gamma$ or $\Delta = \Gamma$, and otherwise $\Sigma \Rightarrow \delta \in T(\overline{\mathbb{L}})$ for all $\delta \in \Delta$ and $\Delta \Rightarrow \psi \in T(\overline{\mathbb{L}})$. In such a case, $\Sigma \Rightarrow \psi \in T(\overline{\mathbb{L}})$, and $\Sigma \Rightarrow \psi \in \mathbb{X}$ by definition of $\mathbb{X}$, so $\mathbb{X}$ is closed under transitivity. But $T(\overline{\mathbb{L}})$ is the least transitive superset of $\overline{\mathbb{L}}$, so this is impossible. Therefore, $\Gamma \Rightarrow \chi \not\in T(\overline{\mathbb{L}})$ for all $\chi$ and in particular $\Gamma \Rightarrow \phi \not\in T(\overline{\mathbb{L}})$. So, $\Gamma \Rightarrow \phi \in \overline{T(\overline{\mathbb{L}})} = T^d(\mathbb{L})$. The latter case can be proved similarly.
\end{proof}

\noindent
Accordingly, the dual transitive closure will transform any non-trivial set of valid inferences of $\mathsf{L}$ into a non-reflexive one by preserving only the theorems and antitheorems of $\mathsf{L}$.

The results from the previous section can now be reinterpreted in light of this new fact. We showed that the dual transitive closure of the intersection of $\mathbb{SS}$ and $\mathbb{TT}$ coincides with the set of $\mathsf{TS}$-valid inferences. Therefore, the set of $\mathsf{TS}$-valid inference corresponds to the set of theorems and antitheorems of $\mathbb{SS} \cap \mathbb{TT}$, which is known to have none. 

\section{Lattice operations} \label{sec:lattice}

We now further examine the interrelation between the sets of inferences studied throughout this paper, specifically those closed under the transitive closure operator and its dual.

The relationships among the set of valid inferences of the logics $\mathsf{ST}$, $\mathsf{SS}$, $\mathsf{TT}$, and $\mathsf{SS} \cap \mathsf{TT
}$, defined over the Strong Kleene scheme, are typically illustrated using a diamond-shaped Hasse diagram (Figure \ref{fig:semlatp}). In this diagram, $\mathbb{ST}$ (or $\mathbb{CL}$) is positioned at the top, $\mathbb{SS} \cap \mathbb{TT}$ at the bottom, with $\mathbb{SS}$ and $\mathbb{TT}$ in between. While this diagram effectively illustrates the inclusion relation holding between these logics, there is more to consider: the inclusion order actually forms a \textit{lattice}.\footnote{A \textit{lattice} is a partially ordered structure $\textbf{A} = \langle A, \le\rangle$ such that for any pair of elements $x, y \in A$, $x, y$ have both an infimum (join) and a supremum (meet) in $A$.} In the case of the logics defined over the Strong Kleene scheme, this fact is well-known. The lattice of extensions of the Belnap-Dunn four-valued logic $\mathsf{FDE}$---including the logics $\mathsf{CL}$, $\mathsf{K}_3$, $\mathsf{LP}$, and $\mathsf{KO}$---was studied in \cite{Rivieccio2012} and \cite{prenosil_2023}, and shown to form a complete lattice. In this structure, $\mathsf{CL}$ is the least Tarskian logic extending both $\mathsf{K}_3$ and $\mathsf{LP}$, $\mathsf{K}_3 \cap \mathsf{LP}$ is the greatest Tarskian logic extended by both $\mathsf{K}_3$ and $\mathsf{LP}$, while $\mathsf{K}_3$ and $\mathsf{LP}$ are incomparable. Together, these four logics constitute a sublattice of the lattice of extensions of $\mathsf{FDE}$. As we shall prove, 
this result holds not only for logics based on the Strong Kleene scheme but generalizes to all logics defined by the $\bf{ss}$, $\bf{tt}$, $\bf{st}$, and $\bf{ss \cap tt}$ standards over any BNM scheme. For any two of these logics, the least Tarskian logic extended by both correspond to their intersection, while the greatest Tarskian logic extending both is obtained by closing their union under transitivity. Perhaps even more unexpectedly, this generalization still holds when each of the $\bf{ss}$, $\bf{tt}$, $\bf{st}$ logics are defined over a different scheme. 

{\footnotesize 
\begin{figure}[b]
\centering
 \begin{tikzpicture}
\draw (0,4) node (st) {$\mathbb{ST}(= \mathbb{CL})$};
\draw (2,2) node (ss) {$\mathbb{TT}$};
\draw (-2,2) node (tt) {$\mathbb{SS}$};
\draw (0,0) node (ts) {$\mathbb{SS} \cap \mathbb{TT}$};
 \draw[-] (ts) -- (ss);
 \draw[-] (ts) -- (tt);
 \draw[-] (ss) -- (st);
 \draw[-] (tt) -- (st);
\end{tikzpicture}
\caption{Inclusion order over $\mathbb{ST}$, $\mathbb{SS}$, $\mathbb{TT}$ and $\mathbb{SS} \cap \mathbb{TT}$}\label{fig:semlatp}
\end{figure}
}

We begin with the following observation, which establishes that for any two Tarskian logics, the least Tarskian logic extending them corresponds to their transitive closure.

\begin{proposition}
    Let $\mathbb{L}_1 = Tar(\mathbb{L}_1)$ and $\mathbb{L}_2 = Tar(\mathbb{L}_2)$. Then,
    \[ T(\mathbb{L}_1 \cup \mathbb{L}_2) = Tar(\mathbb{L}_1 \cup \mathbb{L}_2).\]
\end{proposition}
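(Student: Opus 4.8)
The plan is to show the two inclusions separately, exploiting the fact that $Tar(\mathbb{L}_1 \cup \mathbb{L}_2)$ is by definition the least set containing $\mathbb{L}_1 \cup \mathbb{L}_2$ that satisfies (R), (M), (T), and (S), while $T(\mathbb{L}_1 \cup \mathbb{L}_2)$ is the least set containing $\mathbb{L}_1 \cup \mathbb{L}_2$ that satisfies only (T). Since $T$ imposes strictly fewer closure conditions than $Tar$, the inclusion $T(\mathbb{L}_1 \cup \mathbb{L}_2) \subseteq Tar(\mathbb{L}_1 \cup \mathbb{L}_2)$ should be the easy direction: $Tar(\mathbb{L}_1 \cup \mathbb{L}_2)$ is in particular a transitive superset of $\mathbb{L}_1 \cup \mathbb{L}_2$, and since $T(\mathbb{L}_1 \cup \mathbb{L}_2)$ is the \emph{least} such superset, the inclusion follows immediately.

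**The substance lies in the reverse inclusion** $Tar(\mathbb{L}_1 \cup \mathbb{L}_2) \subseteq T(\mathbb{L}_1 \cup \mathbb{L}_2)$. To prove this, I would argue that $T(\mathbb{L}_1 \cup \mathbb{L}_2)$ is \emph{already} a Tarskian logic, i.e.\ that it satisfies all four properties (R), (M), (T), (S); once this is established, the conclusion is immediate, since $Tar(\mathbb{L}_1 \cup \mathbb{L}_2)$ is the least Tarskian superset of $\mathbb{L}_1 \cup \mathbb{L}_2$ and $T(\mathbb{L}_1 \cup \mathbb{L}_2)$ is such a superset. Transitivity (T) holds by definition of $T$. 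The key step is thus to verify that the remaining three properties are \emph{inherited} by the transitive closure from $\mathbb{L}_1 \cup \mathbb{L}_2$, which holds precisely because both $\mathbb{L}_1$ and $\mathbb{L}_2$ are themselves Tarskian, so their union already satisfies (R), (M), and (S). The work is to show that these three properties survive the passage to $T$.

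**For each of (R), (M), (S)**, the strategy is the same: the property already holds in $\mathbb{L}_1 \cup \mathbb{L}_2$, and we show $T$ cannot destroy it. Reflexivity is immediate, since $\phi \Rightarrow \phi \in \mathbb{L}_1 \subseteq \mathbb{L}_1 \cup \mathbb{L}_2 \subseteq T(\mathbb{L}_1 \cup \mathbb{L}_2)$. For monotonicity and structurality, the cleanest route is an inductive argument following the construction of $T(\mathbb{L}_1 \cup \mathbb{L}_2)$ as the closure under the transitivity rule: the base set $\mathbb{L}_1 \cup \mathbb{L}_2$ is closed under (M) and (S) because each $\mathbb{L}_i$ is, and one checks that applying an instance of the transitivity rule to inferences already satisfying (M) and (S) yields a conclusion that is again compatible with (M) and (S). Concretely, for (M) one shows that if $\Gamma \Rightarrow \phi$ is derived by transitivity and $\Gamma \subseteq \Sigma$, then $\Sigma \Rightarrow \phi$ is likewise derivable; for (S) one shows the transitivity rule commutes with applying a substitution $\sigma$, since $\sigma$ distributes over the premise-and-conclusion structure of an inference.

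**The main obstacle** is formulating the induction cleanly: $T(\mathbb{L}_1 \cup \mathbb{L}_2)$ is defined impredicatively as a least fixed point rather than by an explicit transfinite construction, so the inductive argument for (M) and (S) must be framed as a fixed-point or minimality argument. The tidiest way to handle this is to define the set $\mathbb{M}$ of all inferences in $T(\mathbb{L}_1 \cup \mathbb{L}_2)$ together with their monotonic weakenings and substitution instances, verify that $\mathbb{M}$ is a transitive superset of $\mathbb{L}_1 \cup \mathbb{L}_2$, and invoke minimality of $T(\mathbb{L}_1 \cup \mathbb{L}_2)$ to conclude $\mathbb{M} = T(\mathbb{L}_1 \cup \mathbb{L}_2)$, thereby showing closure under (M) and (S) without any transfinite bookkeeping. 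This mirrors the minimality arguments already used in the proof that $T$ is a closure operator.
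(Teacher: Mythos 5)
Your high-level strategy coincides with the paper's own proof: the inclusion $T(\mathbb{L}_1 \cup \mathbb{L}_2) \subseteq Tar(\mathbb{L}_1 \cup \mathbb{L}_2)$ by minimality of $T$, and for the converse, showing that $T(\mathbb{L}_1 \cup \mathbb{L}_2)$ is itself Tarskian so that minimality on the $Tar$ side applies. You are also right that the delicate point is whether (M) and (S) survive the passage to the transitive closure: the paper's proof in fact only verifies these properties for inferences in the base set $\mathbb{L}_1 \cup \mathbb{L}_2$ before asserting them for $T(\mathbb{L}_1 \cup \mathbb{L}_2)$, so your instinct to supply this step explicitly is sound. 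The gap is in how you execute it. You define $\mathbb{M}$ as $T(\mathbb{L}_1 \cup \mathbb{L}_2)$ \emph{together with} its weakenings and substitution instances---a \emph{superset} of $T(\mathbb{L}_1 \cup \mathbb{L}_2)$---and propose to conclude $\mathbb{M} = T(\mathbb{L}_1 \cup \mathbb{L}_2)$ by minimality. But minimality of $T(\mathbb{L}_1 \cup \mathbb{L}_2)$ only says that it is contained in every transitive superset of $\mathbb{L}_1 \cup \mathbb{L}_2$; applied to $\mathbb{M}$ this yields $T(\mathbb{L}_1 \cup \mathbb{L}_2) \subseteq \mathbb{M}$, which holds by construction and carries no content. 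The inclusion you actually need, $\mathbb{M} \subseteq T(\mathbb{L}_1 \cup \mathbb{L}_2)$, is precisely the claim that weakenings and substitution instances of closure elements already lie in the closure---the very thing to be proved---so the minimality step, as formulated, is circular.

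The repair is to run the fixed-point argument on a \emph{subset} rather than a superset. Let $\mathbb{N}$ be the set of all $\Gamma \Rightarrow \phi$ such that $\sigma[\Gamma] \cup \Sigma \Rightarrow \sigma[\phi] \in T(\mathbb{L}_1 \cup \mathbb{L}_2)$ for every substitution $\sigma$ and every finite $\Sigma \subseteq \mathcal{L}$ (taking $\sigma$ the identity and $\Sigma = \emptyset$ shows $\mathbb{N} \subseteq T(\mathbb{L}_1 \cup \mathbb{L}_2)$). Since $\mathbb{L}_1$ and $\mathbb{L}_2$ are Tarskian, $\mathbb{L}_1 \cup \mathbb{L}_2 \subseteq \mathbb{N}$. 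Moreover $\mathbb{N}$ is closed under the transitivity rule: if $\Delta \neq \emptyset$, $\Delta \Rightarrow \phi \in \mathbb{N}$ and $\Gamma \Rightarrow \delta \in \mathbb{N}$ for all $\delta \in \Delta$, then for any $\sigma$ and $\Sigma$ we have $\sigma[\Delta] \Rightarrow \sigma[\phi] \in T(\mathbb{L}_1 \cup \mathbb{L}_2)$ and $\sigma[\Gamma] \cup \Sigma \Rightarrow \sigma[\delta] \in T(\mathbb{L}_1 \cup \mathbb{L}_2)$ for each $\delta \in \Delta$, and since $\sigma[\Delta] \neq \emptyset$, one application of (T) inside $T(\mathbb{L}_1 \cup \mathbb{L}_2)$ gives $\sigma[\Gamma] \cup \Sigma \Rightarrow \sigma[\phi] \in T(\mathbb{L}_1 \cup \mathbb{L}_2)$, so $\Gamma \Rightarrow \phi \in \mathbb{N}$. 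Now minimality does real work: $T(\mathbb{L}_1 \cup \mathbb{L}_2) \subseteq \mathbb{N}$, i.e.\ (M) and (S) hold throughout the closure. Note that this uniform argument also covers $\Gamma = \emptyset$, where the tempting shortcut for (M)---obtain $\Sigma \Rightarrow \gamma \in \mathbb{L}_1$ for each $\gamma \in \Gamma$ from (R) and (M) of $\mathbb{L}_1$, then apply (T) with $\Delta = \Gamma$---is unavailable, because the paper's transitivity rule requires $\Delta \neq \emptyset$. Your alternative suggestion of induction on derivations would also work (the rule is finitary, premise sets of inferences being finite), but the subset version above achieves exactly the tidiness you were after.
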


\begin{proof}
    If $\mathbb{L}_1$ and $\mathbb{L}_2$ satisfy Reflexivity, then $\mathbb{L}_1 \cup \mathbb{L}_2$ too. By definition, $T(\mathbb{L}_1 \cup \mathbb{L}_2)$ is a superset of $\mathbb{L}_1 \cup \mathbb{L}_2$, so it also satisfies Reflexivity. Turning to Monotonicity, assume that $\Gamma \Rightarrow \phi \in \mathbb{L}_1 \cup \mathbb{L}_2$. Whether $\Gamma \Rightarrow \phi \in \mathbb{L}_1$ or $\Gamma \Rightarrow \phi \in \mathbb{L}_2$, in both cases $\Gamma, \Sigma \Rightarrow \phi  \in \mathbb{L}_1 \cup \mathbb{L}_2 \subseteq T(\mathbb{L}_1 \cup \mathbb{L}_2)$. For Structurality, the case is similar. If $\Gamma \Rightarrow \phi \in \mathbb{L}_1 \cup \mathbb{L}_2$, whether $\Gamma \Rightarrow \phi \in \mathbb{L}_1$ or $\Gamma \Rightarrow \phi \in \mathbb{L}_2$, in both cases $\sigma[\Gamma] \Rightarrow \sigma[\phi]  \in \mathbb{L}_1 \cup \mathbb{L}_2 \subseteq T(\mathbb{L}_1 \cup \mathbb{L}_2)$, for every substitution $\sigma$. Finally, $T(\mathbb{L}_1 \cup \mathbb{L}_2)$ is the smallest transitive superset of $\mathbb{L}_1 \cup \mathbb{L}_2$ by definition and it satisfies Reflexivity, Monotonicity and Structurality, so it is the smallest superset of  $\mathbb{L}_1 \cup \mathbb{L}_2$ satisfying Reflexivity, Monotonicity, Transitivity and Structurality.
\end{proof}

Next, we establish that all the logics discussed thus far, except for $\mathsf{TS}$, are Tarskian.

\begin{lemma}\label{lem:elem}
    For all $\mathbb{L} \in \lbrace \mathbb{ST}, \mathbb{SS}, \mathbb{TT}, \mathbb{SS} \cap \mathbb{TT}\rbrace$, $Tar(\mathbb{L}) = \mathbb{L}$.
\end{lemma}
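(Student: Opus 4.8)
The plan is to show that each of the four sets $\mathbb{ST}$, $\mathbb{SS}$, $\mathbb{TT}$, and $\mathbb{SS}\cap\mathbb{TT}$ already satisfies all four Tarskian properties (R), (M), (T), (S), so that applying $Tar$ leaves them unchanged. Since $Tar(\mathbb{L})$ is by definition the least superset of $\mathbb{L}$ closed under (R), (M), (T), (S), and since $\mathbb{L}\subseteq Tar(\mathbb{L})$ always holds, it suffices to prove that each $\mathbb{L}$ in the list is itself closed under these four rules; then $\mathbb{L}$ is a Tarskian superset of itself, whence $Tar(\mathbb{L})\subseteq\mathbb{L}$, giving equality.

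First I would handle each property at the level of the semantic definition of validity (Definition \ref{def:valuation}), working uniformly over an arbitrary BNM scheme. Reflexivity, Monotonicity, and Structurality are straightforward for any set of the form $\mathbb{XY}$ defined by a single formula-standard pair $\mathbf{xy}$: (R) holds because $v\vDash_{\mathbf{x}}\phi$ trivially implies $v\vDash_{\mathbf{y}}\phi$ when $\mathbf{x}\subseteq\mathbf{y}$, which is the case for $\mathbf{ss}$ ($\mathbf{s}\subseteq\mathbf{s}$), $\mathbf{tt}$ ($\mathbf{t}\subseteq\mathbf{t}$), and $\mathbf{st}$ ($\mathbf{s}\subseteq\mathbf{t}$); (M) is immediate since enlarging $\Gamma$ only strengthens the antecedent of the conditional in Definition \ref{def:valuation}; and (S) follows because substitution instances of a valuation are again valuations respecting the scheme, so any counterexample to $\sigma[\Gamma]\Rightarrow\sigma[\phi]$ pulls back to one for $\Gamma\Rightarrow\phi$. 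These three properties are closed under intersection of sets of inferences, so $\mathbb{SS}\cap\mathbb{TT}$ inherits them automatically from $\mathbb{SS}$ and $\mathbb{TT}$.

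The main obstacle is Transitivity, since, unlike (R), (M), (S), transitivity is \emph{not} in general preserved under intersection, and moreover the $\mathbf{ss}$, $\mathbf{tt}$, $\mathbf{st}$ standards must each be checked separately. For a single standard $\mathbf{xy}$ with $\mathbf{x}=\mathbf{y}$ (the $\mathbf{ss}$ and $\mathbf{tt}$ cases), transitivity is clean: if $v\vDash_{\mathbf{x}}\gamma$ for all $\gamma\in\Gamma$, then from $\vDash_{\mathbf{xx}}\Gamma\Rightarrow\delta$ we get $v\vDash_{\mathbf{x}}\delta$ for every $\delta\in\Delta$, and then $\vDash_{\mathbf{xx}}\Delta\Rightarrow\phi$ gives $v\vDash_{\mathbf{x}}\phi$; this works precisely because the same formula-standard governs both sides. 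For $\mathbb{ST}$ one could invoke Theorem \ref{thm:cl=st}, which identifies $\mathbb{ST}$ with $\mathbb{CL}$: classical logic is Tarskian, and in particular transitive, so $\mathbb{ST}$ is transitive as well. Finally, for $\mathbb{SS}\cap\mathbb{TT}$ transitivity must be argued directly rather than by inheritance: I would take $\Delta\neq\emptyset$ with $\Delta\Rightarrow\phi\in\mathbb{SS}\cap\mathbb{TT}$ and $\Gamma\Rightarrow\delta\in\mathbb{SS}\cap\mathbb{TT}$ for all $\delta\in\Delta$, then run the single-standard transitivity argument above independently in $\mathbb{SS}$ (using the $\mathbf{ss}$ validities) and in $\mathbb{TT}$ (using the $\mathbf{tt}$ validities) to conclude $\Gamma\Rightarrow\phi$ belongs to both, hence to the intersection.

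I would therefore expect the write-up to first record that (R), (M), (S) hold for all four sets by the uniform semantic observations, noting their closure under intersection, and then devote the bulk of the argument to Transitivity, treating $\mathbb{SS}$ and $\mathbb{TT}$ via the equal-standard computation, $\mathbb{ST}$ via Theorem \ref{thm:cl=st}, and $\mathbb{SS}\cap\mathbb{TT}$ by running the equal-standard computation separately in each component. The one subtlety to flag is that the different sets may be defined over \emph{different} BNM schemes, but since each Tarskian property is verified internally to a fixed scheme, this causes no difficulty: the lemma asserts $Tar(\mathbb{L})=\mathbb{L}$ for each individual $\mathbb{L}$, and the scheme is fixed once $\mathbb{L}$ is.
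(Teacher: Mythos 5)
Your proposal is correct and takes essentially the same route as the paper's proof: it dismisses (R), (M), (S) as straightforward, concentrates on Transitivity, verifies it for $\mathbb{SS}$ and $\mathbb{TT}$ by the equal-standard semantic computation (the paper argues contrapositively from a countermodel, you argue directly—the same argument), handles $\mathbb{ST}$ via Theorem \ref{thm:cl=st} (this is what the paper's ``straightforward'' must mean, since the chaining argument fails when the two formula-standards differ), and obtains $\mathbb{SS} \cap \mathbb{TT}$ componentwise from the two cases just established, exactly as the paper does. One incidental misstatement: your claim that transitivity is \emph{not} in general preserved under intersection is false—closure conditions of this Horn form (``if certain inferences are in $\mathbb{L}$, then so is another'') are always intersection-stable, which is precisely why Tarskian logics form a closure system—but since your intersection argument is just this inheritance, the slip does not affect the proof.
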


\begin{proof}
The proofs for Reflexivity, Monotonicity and Structurality are straightforward. We thus focus solely on Transitivity. We prove that if there is $\Delta \neq \emptyset$, $\Gamma \Rightarrow \delta \in \mathbb{L}$ for all $\delta \in \Delta$ and  $\Delta \Rightarrow \phi \in \mathbb{L}$, then $\Gamma \Rightarrow \phi \in \mathbb{L}$. We demonstrate first the case where $\mathbb{L} = \mathbb{SS}$, noting that the case for $\mathbb{TT}$ is similar, while the case for $\mathbb{ST}$ is straightforward. Assume $\Gamma \Rightarrow \phi \not\in \mathbb{SS}$. Then there is $v$ such that $v(\gamma)=1$ for all $\gamma \in \Gamma$ and $v(\phi)\neq 1$. Let $\Delta \neq \emptyset$. If for all $\delta \in \Delta$, $v(\delta) = 1$, then $v \not\models^{\textbf{X}}_{\bf{ss}} \Delta \Rightarrow \phi$. If for some $\delta \in \Delta$, $v(\delta) \neq 1$, then $v \not\models^{\textbf{X}}_{\bf{ss}} \Gamma \Rightarrow \delta$ for some $\delta \in \Delta$. In both cases there is no $\Delta \neq \emptyset$ such that $\Gamma \Rightarrow \delta \in \mathbb{SS}$ for all $\delta \in \Delta$ and  $\Delta \Rightarrow \phi \in \mathbb{SS}$.
Turning to $\mathbb{SS} \cap \mathbb{TT}$, if there is $\Delta$ such that $\Gamma \Rightarrow \delta \in \mathbb{SS} \cap \mathbb{TT}$ for all $\delta \in \Delta$ and  $\Delta \Rightarrow \phi \in \mathbb{SS} \cap \mathbb{TT}$, then $\Gamma \Rightarrow \delta$ is both in $\mathbb{SS}$ and $\mathbb{TT}$ for all $\delta \in \Delta$, and similarly for $\Delta \Rightarrow \phi$. But we just proved that $\mathbb{SS} = Tar(\mathbb{SS})$ and $\mathbb{TT} = Tar(\mathbb{TT})$, so $\Gamma \Rightarrow \phi \in \mathbb{SS} \cap \mathbb{TT}$.
\end{proof}

\noindent
The join of two elements of the lattice will be defined as the Tarskian closure of their union, while their meet will be defined as their intersection. Given the identity $\mathbb{SS} \cap \mathbb{TT} = Tar(\mathbb{SS} \cap \mathbb{TT})$, which we have just proved, we could equally use the Tarskian closure of the intersection as meet, emphasizing the duality of the operations. 

We now show that these operations are well-suited to define the inclusion order, as expected in a lattice. The suitability of the intersection for defining the inclusion order is straightforward, so we will focus on the transitive closure of the union.

\enlargethispage{15pt}

\begin{lemma}\label{lem:elem2}
    For all $\mathbb{X}, \mathbb{Y} \in \lbrace \mathbb{ST}, \mathbb{SS}, \mathbb{TT}, \mathbb{SS} \cap \mathbb{TT}\rbrace$, $\mathbb{X} \subseteq \mathbb{Y}$ if and only if $T(\mathbb{X} \cup \mathbb{Y}) = \mathbb{Y}$.
\end{lemma}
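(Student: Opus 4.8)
The plan is to prove both implications directly from the two ingredients already available: the fact that $T$ is a closure operator, and Lemma \ref{lem:elem}, which guarantees that every member of $\lbrace \mathbb{ST}, \mathbb{SS}, \mathbb{TT}, \mathbb{SS} \cap \mathbb{TT}\rbrace$ is Tarskian and hence, in particular, transitive. I would fix arbitrary $\mathbb{X}, \mathbb{Y}$ in this set and treat the two directions separately.

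For the right-to-left direction, I would suppose $T(\mathbb{X} \cup \mathbb{Y}) = \mathbb{Y}$. Since $T$ is a closure operator, it is extensive, so $\mathbb{X} \cup \mathbb{Y} \subseteq T(\mathbb{X} \cup \mathbb{Y}) = \mathbb{Y}$. As $\mathbb{X} \subseteq \mathbb{X} \cup \mathbb{Y}$, it follows immediately that $\mathbb{X} \subseteq \mathbb{Y}$. This direction uses nothing beyond the extensivity of $T$, and does not even appeal to the special nature of the four listed sets.

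For the left-to-right direction, I would suppose $\mathbb{X} \subseteq \mathbb{Y}$. Then $\mathbb{X} \cup \mathbb{Y} = \mathbb{Y}$, so $T(\mathbb{X} \cup \mathbb{Y}) = T(\mathbb{Y})$, and it remains only to show $T(\mathbb{Y}) = \mathbb{Y}$. Here I would invoke Lemma \ref{lem:elem}: $\mathbb{Y}$ is Tarskian, hence closed under (T). Since $T(\mathbb{Y})$ is by definition the least transitive superset of $\mathbb{Y}$ and $\mathbb{Y}$ is itself already transitive, we conclude $T(\mathbb{Y}) = \mathbb{Y}$, which finishes this direction.

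The only substantive point is the appeal to Lemma \ref{lem:elem} in the forward direction, which secures that $\mathbb{Y}$ is transitive and thereby makes $T$ act as the identity on $\mathbb{Y}$. I do not expect a genuine obstacle: the biconditional in fact holds for any pair of transitive sets of inferences, and the restriction to the four listed sets serves only to ensure, via Lemma \ref{lem:elem}, that this transitivity hypothesis is satisfied. This is precisely what licenses using $T(\,\cdot\,\cup\,\cdot\,)$ as the join in the lattice of the subsequent section.
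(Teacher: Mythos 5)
Your proof is correct and follows essentially the same route as the paper's: the right-to-left direction via extensivity of $T$, and the left-to-right direction by reducing $T(\mathbb{X} \cup \mathbb{Y})$ to $T(\mathbb{Y})$ and invoking Lemma \ref{lem:elem}. You merely spell out a step the paper elides, namely that $Tar(\mathbb{Y}) = \mathbb{Y}$ gives transitivity of $\mathbb{Y}$ and hence $T(\mathbb{Y}) = \mathbb{Y}$.
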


\begin{proof}
    (Left-to-right) Assume  $\mathbb{X} \subseteq \mathbb{Y}$. By Lemma \ref{lem:elem}, $ T(\mathbb{Y}) = \mathbb{Y}$, so $T(\mathbb{X} \cup \mathbb{Y}) = \mathbb{Y}$.
    
\medskip

    \noindent
    (Right-to-left) Assume $T(\mathbb{X} \cup \mathbb{Y}) = \mathbb{Y}$ and let $\Gamma \Rightarrow \phi \in \mathbb{X}$. Then $\Gamma \Rightarrow \phi \in \mathbb{X} \cup \mathbb{Y} \subseteq T(\mathbb{X} \cup \mathbb{Y}) = \mathbb{Y}$.
\end{proof}

We are now ready to prove that the sets of valid inferences of the aforementioned logics form a lattice. First, we show that, together with intersection and the binary operation of transitive closure of the union, it constitutes an algebra. Then, we show that it is a lattice.

\begin{fact}\label{fct:lat}
    Let $\mathfrak{L} \coloneqq \langle \lbrace \mathbb{ST}, \mathbb{SS}, \mathbb{TT}, \mathbb{SS} \cap \mathbb{TT} \rbrace, \sqcap, \sqcup \rangle$, with $\sqcup$ defined for all $\mathbb{X}, \mathbb{Y} \in \mathfrak{L}$ by $\mathbb{X} \sqcup \mathbb{Y} \coloneqq T(\mathbb{X} \cup \mathbb{Y})$ and $\sqcap$ by $\mathbb{X} \sqcap \mathbb{Y} \coloneqq \mathbb{X} \cap \mathbb{Y}$. Then $\mathfrak{L}$ is a lattice algebra.
\end{fact}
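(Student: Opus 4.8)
The plan is to avoid verifying the lattice axioms (commutativity, associativity, idempotency and absorption) by brute force, and instead to show that $\sqcap$ and $\sqcup$ compute, respectively, the greatest lower bound and the least upper bound with respect to the inclusion order $\subseteq$ restricted to the four-element carrier. Once this is established, the standard equivalence between the order-theoretic and the algebraic presentations of a lattice yields immediately that $\mathfrak{L}$ is a lattice algebra.

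First I would record the poset structure. By Fact \ref{fct:subs} we have $\mathbb{SS}, \mathbb{TT} \subseteq \mathbb{ST}$ with $\mathbb{SS}$ and $\mathbb{TT}$ incomparable, and trivially $\mathbb{SS} \cap \mathbb{TT} \subseteq \mathbb{SS}, \mathbb{TT}$; hence $\langle \lbrace \mathbb{ST}, \mathbb{SS}, \mathbb{TT}, \mathbb{SS} \cap \mathbb{TT} \rbrace, \subseteq \rangle$ is exactly the diamond of Figure \ref{fig:semlatp}, with $\mathbb{SS}, \mathbb{TT}$ as the unique incomparable pair.

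Next I would check closure simultaneously with the fact that the two operations compute meets and joins. For $\sqcap = \cap$: on a comparable pair the intersection returns the smaller element, and on the incomparable pair $\mathbb{SS}, \mathbb{TT}$ it returns $\mathbb{SS} \cap \mathbb{TT}$, which is a carrier element and is plainly their greatest lower bound; thus $\sqcap$ is total on the carrier and computes meets. For $\sqcup = T(\cdot \cup \cdot)$: for a comparable pair $\mathbb{X} \subseteq \mathbb{Y}$, Lemma \ref{lem:elem2} gives $\mathbb{X} \sqcup \mathbb{Y} = T(\mathbb{X} \cup \mathbb{Y}) = \mathbb{Y}$, which is the least upper bound; for the incomparable pair, Theorem \ref{th:st0} gives $\mathbb{SS} \sqcup \mathbb{TT} = T(\mathbb{SS} \cup \mathbb{TT}) = \mathbb{ST}$, and since $\mathbb{ST}$ is the only common upper bound of $\mathbb{SS}$ and $\mathbb{TT}$ in the diamond, it is their least upper bound. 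Hence $\sqcup$ is also total on the carrier and computes joins.

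Finally I would conclude. Since every two-element subset of the carrier has both a meet (given by $\sqcap$) and a join (given by $\sqcup$) with respect to $\subseteq$, the poset is a lattice; and because $\sqcap, \sqcup$ coincide with the order-theoretic meet and join, the algebra $\mathfrak{L}$ satisfies the lattice identities, i.e. it is a lattice algebra. The main obstacle---indeed the only nontrivial point---is the join of the two incomparable elements: here the transitive closure of the union genuinely differs from the bare set-theoretic union (compare Theorem \ref{th:ssunionttinclcl}), so that the identity $\mathbb{SS} \sqcup \mathbb{TT} = \mathbb{ST}$ rests essentially on Theorem \ref{th:st0} rather than on any elementary manipulation of unions.
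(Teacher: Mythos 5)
Your proof is correct, but it takes a genuinely different route from the paper's. The paper proves the result purely algebraically: it first establishes closure of the carrier under $\sqcup$ (via Theorem \ref{th:st0} together with Lemmas \ref{lem:elem} and \ref{lem:elem2}) and under $\sqcap$, and then verifies the lattice identities directly---commutativity and idempotence trivially, associativity of $\sqcup$ by a closure-operator computation showing $T(T(\mathbb{X} \cup \mathbb{Y}) \cup \mathbb{Z}) = T(\mathbb{X} \cup \mathbb{Y} \cup \mathbb{Z}) = T(\mathbb{X} \cup T(\mathbb{Y} \cup \mathbb{Z}))$, and absorption via $\mathbb{X} = T(\mathbb{X})$ from Lemma \ref{lem:elem}. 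You instead compute the full order: you identify the carrier as the four-element diamond of Figure \ref{fig:semlatp} (using Fact \ref{fct:subs}), check case by case that $\sqcap$ and $\sqcup$ return the order-theoretic meet and join (Lemma \ref{lem:elem2} for comparable pairs, Theorem \ref{th:st0} for the unique incomparable pair), and then invoke the standard equivalence between the order-theoretic and algebraic presentations of a lattice---the same equivalence the paper uses, but in the opposite direction, in the corollary that follows. Each approach buys something: yours is shorter and makes transparent that the only nontrivial content is $\mathbb{SS} \sqcup \mathbb{TT} = \mathbb{ST}$, but it leans essentially on the carrier being small and completely described, since you must enumerate upper bounds to see that $\mathbb{ST}$ is least. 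The paper's associativity and absorption computations, by contrast, are general facts about the closure operator $T$ acting on $T$-closed sets and never consult the particular order, so the paper's argument would survive unchanged for any larger family of $T$-closed sets of inferences, where your enumeration would have to be redone. One small point worth making explicit in your write-up: the distinctness of the four carrier elements (so that the poset really is the diamond) follows from the incomparability clause of Fact \ref{fct:subs}, though for the lattice conclusion itself nothing would break if some elements coincided.
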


\begin{proof}
    To show that $\mathfrak{L}$ is an algebra, it is enough to show that it is closed under $\sqcap$ and $\sqcup$. The closure under $\sqcap$ is straightforward to prove since $\sqcap = \cap$. The closure under $\sqcup$ follows from Theorem \ref{th:st0}, and lemmas \ref{lem:elem} and \ref{lem:elem2}.

    In order to prove that $\mathfrak{L}$ is in addition a lattice algebra, we must show that $\sqcap$ and $\sqcup$ are commutative, idempotent, associative and satisfy the absorbtion laws. The first three properties obviously hold for $\sqcap$, since $\sqcap = \cap$.
    As for $\sqcup$, the first two properties are trivial to establish given the commutativity and the idempotence of union. We focus on proving associativity.

Since $\X \cup \Y \subseteq \X \cup \Y \cup \Z$ and $T$ is a closure operator, $T(\X \cup \Y) \subseteq T(\X \cup \Y \cup \Z)$. But $\Z \subseteq T(\X \cup \Y \cup \Z)$, so $T(\X \cup \Y) \cup \Z \subseteq T(\X \cup \Y \cup \Z)$ and hence $T(T(\X \cup \Y) \cup \Z)) \subseteq T(T(\X \cup \Y \cup \Z)) = T(\X \cup \Y \cup \Z)$. On the other hand, $\Y \cup \Z \subseteq T(\Y \cup \Z)$, so $\X \cup \Y \cup \Z \subseteq \X \cup T(\Y \cup \Z)$ and therefore $T(\X \cup \Y \cup \Z) \subseteq T(\X \cup T(\Y \cup \Z))$. Whence $(\X \sqcup \Y) \sqcup \Z = T(T(\X \cup \Y) \cup \Z) \subseteq T(\X \cup T(\Y \cup \Z)) = \X \sqcup (\Y \sqcup \Z)$. The right-to-left inclusion is similar.

Turning to the absorption laws, since $\mathbb{X} = T(\mathbb{X})$ by Lemma \ref{lem:elem}, it holds that $\mathbb{X} = T(\mathbb{X} \cup (\mathbb{X} \cap \mathbb
{Y}))$, and thus $\mathbb{X} = \mathbb{X} \sqcup (\mathbb{X} \sqcap \mathbb
{Y})$. Now, $\mathbb{X} \subseteq T(\mathbb{X} \cup \mathbb{Y})$, so $\mathbb{X} = \mathbb{X} \cap T(\mathbb{X} \cup \mathbb{Y})$, meaning that $\mathbb{X} = \mathbb{X} \sqcap (\mathbb{X} \sqcup \mathbb{Y})$.
\end{proof}

It is now easy to see that $\mathfrak{L} \coloneqq \langle \lbrace \mathbb{ST}, \mathbb{SS}, \mathbb{TT}, \mathbb{SS} \cap \mathbb{TT}\rbrace, \subseteq \rangle$ is a lattice ordered by inclusion. 

\begin{corollary}
    $\mathfrak{L} \coloneqq \langle \lbrace \mathbb{ST}, \mathbb{SS}, \mathbb{TT}, \mathbb{SS} \cap \mathbb{TT}\rbrace, \subseteq \rangle$ is a lattice with infimum $\textit{inf}\lbrace\X, \Y\rbrace = \X \cap \Y$ and supremum $\textit{sup}\lbrace\X, \Y\rbrace = T(\X \cup \Y)$.
\end{corollary}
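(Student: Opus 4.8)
The plan is to derive the corollary as a direct consequence of Fact \ref{fct:lat}, which already establishes that $\mathfrak{L} = \langle \lbrace \mathbb{ST}, \mathbb{SS}, \mathbb{TT}, \mathbb{SS} \cap \mathbb{TT} \rbrace, \sqcap, \sqcup \rangle$ is a lattice algebra with $\mathbb{X} \sqcap \mathbb{Y} = \mathbb{X} \cap \mathbb{Y}$ and $\mathbb{X} \sqcup \mathbb{Y} = T(\mathbb{X} \cup \mathbb{Y})$. The only thing left to show is that the order induced by this algebraic lattice coincides with the inclusion order $\subseteq$, and that the induced infimum and supremum are exactly $\cap$ and $T(\cdot \cup \cdot)$ respectively.

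First I would recall the standard fact that any lattice algebra $\langle A, \sqcap, \sqcup\rangle$ induces a partial order by setting $x \le y$ if and only if $x \sqcap y = x$ (equivalently $x \sqcup y = y$), and that with respect to this order $\sqcap$ computes infima and $\sqcup$ computes suprema. Applying this to $\mathfrak{L}$, the induced order is $\mathbb{X} \le \mathbb{Y}$ iff $\mathbb{X} \sqcap \mathbb{Y} = \mathbb{X}$, i.e. $\mathbb{X} \cap \mathbb{Y} = \mathbb{X}$, which is precisely $\mathbb{X} \subseteq \mathbb{Y}$. Thus the induced order is exactly set inclusion. One could alternatively route through the $\sqcup$-characterization of the order and invoke Lemma \ref{lem:elem2}, which states that $\mathbb{X} \subseteq \mathbb{Y}$ iff $T(\mathbb{X} \cup \mathbb{Y}) = \mathbb{Y}$, i.e. $\mathbb{X} \sqcup \mathbb{Y} = \mathbb{Y}$; this gives the same conclusion and makes the role of the supremum explicit.

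Having identified the order with $\subseteq$, it follows immediately from the general theory of lattices that $\langle \lbrace \mathbb{ST}, \mathbb{SS}, \mathbb{TT}, \mathbb{SS} \cap \mathbb{TT}\rbrace, \subseteq \rangle$ is a lattice, with $\textit{inf}\lbrace \mathbb{X}, \mathbb{Y}\rbrace = \mathbb{X} \sqcap \mathbb{Y} = \mathbb{X} \cap \mathbb{Y}$ and $\textit{sup}\lbrace \mathbb{X}, \mathbb{Y}\rbrace = \mathbb{X} \sqcup \mathbb{Y} = T(\mathbb{X} \cup \mathbb{Y})$, which is exactly the statement of the corollary. Since everything hinges on the already-proved lattice-algebra structure, there is essentially no remaining mathematical content; the proof is a one-line appeal to Fact \ref{fct:lat} together with the correspondence between lattice algebras and lattice orders.

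The main (and only) obstacle I anticipate is purely a matter of rigor rather than difficulty: one must confirm that the algebraic order genuinely matches $\subseteq$ rather than merely asserting it. This is why I would explicitly unfold the definition $\mathbb{X} \le \mathbb{Y} \iff \mathbb{X} \cap \mathbb{Y} = \mathbb{X}$ and observe it is a set-theoretic tautology equivalent to $\mathbb{X} \subseteq \mathbb{Y}$, optionally cross-checking with Lemma \ref{lem:elem2} for the supremum side. No nontrivial verification of lattice axioms is needed here, as those were discharged in Fact \ref{fct:lat}.
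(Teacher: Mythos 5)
Your proposal is correct and follows exactly the route the paper takes: its proof of this corollary is the one-liner ``straightforward, given the interdefinability of a lattice order and a lattice algebra,'' i.e.\ a direct appeal to Fact \ref{fct:lat} via the standard lattice-algebra/lattice-order correspondence. Your unfolding of the induced order as $\mathbb{X} \sqcap \mathbb{Y} = \mathbb{X}$ iff $\mathbb{X} \subseteq \mathbb{Y}$ (with the optional cross-check through Lemma \ref{lem:elem2}) merely makes explicit the detail the paper leaves implicit.
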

\begin{proof}
    The proof is straightforward, given the interdefinability of a lattice order and a lattice algebra.
\end{proof}

Importantly, 
$\mathfrak{L}$ forms a sublattice of the closure system $\mathfrak{T} = \lbrace \mathbb{L} : \mathbb{L} = Tar(\mathbb{L}) \rbrace$ ordered by $\subseteq$. As a closure system, $\langle \mathfrak{T}, \subseteq \rangle$ is a complete lattice of sets of inferences closed under the Tarskian properties.\footnote{A closure system $\mathfrak{C}$ on a set $A$ is a collection of subsets of $\mathcal{P}(A)$ such that $A \in \mathfrak{C}$, and if $\mathfrak{B} \subseteq \mathfrak{C}$ for $\mathfrak{B} \neq \emptyset$, then $\bigcap \mathfrak{B} \in \mathfrak{C}$. Every closure operator $C$ naturally generates a closure system $\mathfrak{C} = \lbrace X \subseteq A : C(X) = X \rbrace$ and a partially ordered set $\langle \mathfrak{C}, \subseteq \rangle$. This partially ordered set can be proved to be a complete lattice with operations $\bigwedge F_i = \bigcap F_i$ and $\bigvee F_i = C(\bigcup F_i)$ for any $\lbrace F_i : i \in I \rbrace \subseteq \mathfrak{C}$. See \cite[p.\ 37]{font2016}, Proposition 1.28.}  For any two elements of $\mathfrak{T}$, their join is the least set of inferences satisfying the Tarskian properties extending both, and their meet is the greatest such set extended by both. Consequently, the fact that $\mathfrak{L}$ is a sublattice of $\mathfrak{T}$ implies that $\mathbb{ST}$ (or $\mathbb{CL}$) is the least set closed under the Tarskian properties extending both $\mathbb{SS}$ and $\mathbb{TT}$, while $\mathbb{SS} \cap \mathbb{TT}$ is the greatest such set contained in both. Given our definition of a logic as the family of sets of its valid inferences, it follows directly that $\mathsf{ST}$ is the least Tarskian logic that extends both $\mathsf{SS}$ and $\mathsf{TT}$, while $\mathsf{SS} \cap \mathsf{TT}$ is the greatest Tarskian logic contained in both. More interestingly, this result holds independently of a choice of BNM scheme for $\mathsf{SS}$, $\mathsf{TT}$ and $\mathsf{ST}$, as illustrated by Figure \ref{fig:pwkk3} for the valid inferences of the logics $\mathsf{CL}$, $\mathsf{K}_3$, $\mathsf{PWK}$, and $\mathsf{K}_3 \cap \mathsf{PWK}$.

{\footnotesize 
\begin{figure}[ht]
\centering
 \begin{tikzpicture}
\draw (0,4) node (st) {$\mathbb{CL}$};
\draw (2,2) node (ss) {$\mathbb{PWK}$};
\draw (-2,2) node (tt) {$\mathbb{K}_3$};
\draw (0,0) node (ts) {$\mathbb{K}_3 \cap \mathbb{PWK}$};
 \draw[-] (ts) -- (ss);
 \draw[-] (ts) -- (tt);
 \draw[-] (ss) -- (st);
 \draw[-] (tt) -- (st);
\end{tikzpicture}
\caption{Lattice order over $\mathbb{CL}$, $\mathbb{K}_3$, $\mathbb{PWK}$ and $\mathbb{K}_3 \cap \mathbb{PWK}$}\label{fig:pwkk3}
\end{figure}
}

Given the duality between the operations of transitive closure and dual transitive closure, what should we expect from the structure that arises from the dual transitive closure of the logics discussed here? It turns out that the resulting structure forms a lattice isomorphic to the one generated by the logics $\mathsf{SS}$, $\mathsf{TT}$, $\mathsf{SS} \cap \mathsf{TT}$, and $\mathsf{ST}$. Specifically, in view of Proposition \ref{prop:atth}, the elements of this lattice are the sets of theorems and antitheorems of these logics. Similar to the lattice of valid inferences, the overall structure holds independently of a specific choice of scheme for $\mathsf{SS}$ and  $\mathsf{TT}$ and $\mathsf{TS}$. 

To prove our claim, we first establish how the different sets are ordered by inclusion.

\begin{fact}\label{fct:subsath}
    Let $\mathsf{SS}, \mathsf{TT}, \mathsf{TS}$ be logics defined respectively by an $\bf{ss}, \bf{tt}$ and $\bf{ts}$ standard over a (possibly different) BNM scheme. Then
    \begin{itemize}
        \item $\mathbb{TS}^{\star} \subseteq \mathbb{SS}^{\star}, \mathbb{TT}^{\star}$,
        \item $\mathbb{SS}^{\star}$ and $\mathbb{TT}^{\star}$ are incomparable.
    \end{itemize}
\end{fact}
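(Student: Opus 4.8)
The plan is to unpack the definition of $\mathbb{L}^{\star}$ and translate each inclusion and incomparability claim into statements about theorems and antitheorems of the relevant logics. Recall that by Proposition \ref{prop:atth} and the definition preceding it, $\mathbb{L}^{\star}$ consists of all inferences $\Gamma \Rightarrow \phi$ such that either $\Gamma$ is an antitheorem of $\mathsf{L}$ or $\phi$ is a theorem of $\mathsf{L}$. So the two bullets reduce to comparing the sets of theorems and antitheorems of $\mathsf{SS}$, $\mathsf{TT}$, and $\mathsf{TS}$.

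For the first bullet, I would argue that $\mathbb{TS}^{\star} \subseteq \mathbb{SS}^{\star}$ and $\mathbb{TS}^{\star} \subseteq \mathbb{TT}^{\star}$. By Theorem \ref{thm:empty=st}, $\mathbb{TS} = \emptyset$, so $\mathsf{TS}$ has no theorems and no antitheorems in the sense required; consequently $\mathbb{TS}^{\star} = \emptyset$, which is trivially a subset of everything. (One should double-check that $\mathbb{L}^{\star}$, when $\mathbb{L} = \emptyset$, is genuinely empty: since the empty logic validates no inference of the form $\Gamma \Rightarrow \psi$ for all $\psi$, and no inference $\Delta \Rightarrow \phi$ for all $\Delta$, there are no antitheorems and no theorems, so $\mathbb{L}^{\star} = \emptyset$.) This makes the first bullet immediate.

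For the second bullet, the incomparability of $\mathbb{SS}^{\star}$ and $\mathbb{TT}^{\star}$, I would exhibit a witness in each direction, reusing the formulas from Fact \ref{fct:subs}. The natural candidates are the law of excluded middle and explosion. First, $\emptyset \Rightarrow p \vee \neg p$ is a theorem of any $\mathbf{tt}$-logic but not of any $\mathbf{ss}$-logic, since for a BNM scheme $v(p \vee \neg p) \neq 0$ always, yet $v(p \vee \neg p) = \sfrac{1}{2}$ when $v(p) = \sfrac{1}{2}$. Hence $p \vee \neg p$ is a theorem of $\mathsf{TT}$ but not of $\mathsf{SS}$, placing the corresponding inference in $\mathbb{TT}^{\star} \setminus \mathbb{SS}^{\star}$. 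Dually, $p \wedge \neg p$ is an antitheorem of any $\mathbf{ss}$-logic but not of any $\mathbf{tt}$-logic, since $v(p \wedge \neg p) \neq 1$ always while $v(p \wedge \neg p) = \sfrac{1}{2}$ when $v(p) = \sfrac{1}{2}$; this yields an inference in $\mathbb{SS}^{\star} \setminus \mathbb{TT}^{\star}$. Using Facts \ref{fct:ath2} and \ref{fct:th2} to certify that these membership claims are correctly expressed in terms of $\mathbb{L}^{\star}$, both non-inclusions follow.

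The main obstacle I anticipate is not the mathematical content, which is light, but the bookkeeping of matching the semantic facts about theorems/antitheorems to the precise definition of $\mathbb{L}^{\star}$. One must be careful that ``$\phi$ is a theorem'' corresponds to $\phi$ appearing as the conclusion of an inference in $\mathbb{L}^{\star}$ with \emph{arbitrary} premises, and ``$\Gamma$ is an antitheorem'' corresponds to $\Gamma$ appearing as the premise set with \emph{arbitrary} conclusion; the witnesses must be chosen so that the membership holds for the correct reason (theoremhood versus antitheoremhood) and the non-membership is verified against \emph{both} disjuncts of the defining condition. Provided this is handled cleanly via the characterizations in Facts \ref{fct:ath2} and \ref{fct:th2}, the proof is short.
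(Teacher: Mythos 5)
Your proof is correct and follows essentially the same route as the paper's: the incomparability is established by the same kind of witnesses (the paper uses $p \wedge \neg p \Rightarrow q \in \mathbb{SS}^{\star} \setminus \mathbb{TT}^{\star}$ and $p \Rightarrow q \lor \neg q \in \mathbb{TT}^{\star} \setminus \mathbb{SS}^{\star}$; your $\emptyset \Rightarrow p \vee \neg p$ and the explosion inference are interchangeable with these, and your care in checking non-membership against both disjuncts of the definition of $\mathbb{L}^{\star}$ is exactly what is needed). The only divergence is in the first bullet, where the paper applies the monotonicity of the interior operator $T^{d}$ to $\mathbb{TS} = \emptyset \subseteq \mathbb{SS}, \mathbb{TT}$ and then identifies $T^{d}(\mathbb{L})$ with $\mathbb{L}^{\star}$ via Proposition \ref{prop:atth}, whereas you compute directly from Theorem \ref{thm:empty=st} and the definition of $\mathbb{L}^{\star}$ that $\mathbb{TS}^{\star} = \emptyset$ --- an equivalent and, if anything, more self-contained one-liner.
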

\begin{proof}
    For the first inclusion, note that $\mathbb{TS} = \emptyset \subseteq \mathbb{SS}, \mathbb{TT}$. It then follows that $T^d(\mathbb{TS}) \subseteq T^d(\mathbb{SS}), T^d(\mathbb{TT})$ since $T^d$ is an interior operator. But $T^d(\mathbb{TS}) = \mathbb{TS}^{\star}$, $T^d(\mathbb{SS}) = \mathbb{SS}^{\star}$, and $T^d(\mathbb{TT}) = \mathbb{TT}^{\star}$ by Proposition \ref{prop:atth}. Hence, $\mathbb{TS}^{\star} \subseteq \mathbb{SS}^{\star}, \mathbb{TT}^{\star}$.
     
     As for the incomparability of $\mathbb{SS}^{\star}$ and $\mathbb{TT}^{\star}$, note that $p \wedge \neg p \Rightarrow q$ is in $\mathbb{SS}^{\star}$, but not in $\mathbb{TT}^{\star}$, while $p \Rightarrow q \lor \neg q$ is in $\mathbb{TT}^{\star}$, but not in $\mathbb{SS}^{\star}$.
\end{proof}

We know from Proposition \ref{prop:atth} that each of $\mathbb{SS}^{\star}$, $\mathbb{TT}^{\star}$ and $\mathbb{TS}^{\star}$ are open under dual transitivity. We show here that this is also the case of $\mathbb{SS}^{\star} \cup \mathbb{TT}^{\star}$, which corresponds to the set of antitheorems and theorems of classical logic.\footnote{For $\mathsf{SS} = \mathsf{K}_3$ and $\mathsf{TT} = \mathsf{LP}$, this result was previously established in \cite{blomet2024sttsproductsum}.}

\begin{fact}\label{fct:atthcl}
    \[T^d(\mathbb{SS}^{\star} \cup \mathbb{TT}^{\star}) =  \mathbb{SS}^{\star} \cup \mathbb{TT}^{\star} = \mathbb{ST}^{\star} = \mathbb{CL}^{\star}.\]
\end{fact}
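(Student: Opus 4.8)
The plan is to prove the chain of equalities by splitting it into three separate identities, each of which can be attacked independently. The most substantial one is the first, $T^d(\mathbb{SS}^{\star} \cup \mathbb{TT}^{\star}) = \mathbb{SS}^{\star} \cup \mathbb{TT}^{\star}$, which amounts to showing that $\mathbb{SS}^{\star} \cup \mathbb{TT}^{\star}$ is a fixed point of the interior operator $T^d$. Since $T^d$ contracts any set (being an interior operator), the inclusion $T^d(\mathbb{SS}^{\star} \cup \mathbb{TT}^{\star}) \subseteq \mathbb{SS}^{\star} \cup \mathbb{TT}^{\star}$ is automatic. For the reverse inclusion, by Proposition \ref{prp:duaTr} it suffices to show that the complement $\overline{\mathbb{SS}^{\star} \cup \mathbb{TT}^{\star}}$ is already closed under transitivity, i.e.\ that $T(\overline{\mathbb{SS}^{\star} \cup \mathbb{TT}^{\star}}) = \overline{\mathbb{SS}^{\star} \cup \mathbb{TT}^{\star}}$; equivalently, that whenever an inference is obtained by a transitive cut from non-members of $\mathbb{SS}^{\star} \cup \mathbb{TT}^{\star}$, it too lies outside $\mathbb{SS}^{\star} \cup \mathbb{TT}^{\star}$. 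Concretely, I would assume $\Gamma \Rightarrow \phi \in \mathbb{SS}^{\star} \cup \mathbb{TT}^{\star}$ and show it cannot be cut out: if $\Gamma$ is an antitheorem (of $\mathsf{SS}$) or $\phi$ is a theorem (of $\mathsf{TT}$), then for any cut through some $\Delta$, one of the premise-inferences $\Gamma \Rightarrow \delta$ or the conclusion-inference $\Delta \Rightarrow \phi$ must again be a theorem/antitheorem, hence again in $\mathbb{SS}^{\star} \cup \mathbb{TT}^{\star}$. This mirrors the contrapositive argument already used in the $(\supseteq)$ direction of Proposition \ref{prop:atth}.

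For the second identity, $\mathbb{SS}^{\star} \cup \mathbb{TT}^{\star} = \mathbb{ST}^{\star}$, I would use Proposition \ref{prop:atth} together with Theorem \ref{th:st0}. Since $\mathbb{ST} = T(\mathbb{SS} \cup \mathbb{TT})$, applying $T^d$ and using $T^d(\mathbb{ST}) = \mathbb{ST}^{\star}$ reduces the claim to relating $T^d(T(\mathbb{SS} \cup \mathbb{TT}))$ to $\mathbb{SS}^{\star} \cup \mathbb{TT}^{\star}$. A cleaner route is to argue directly at the level of theorems and antitheorems: a formula $\phi$ is a theorem of $\mathsf{ST}$ iff $v(\phi) = 1$ for all valuations iff $\phi$ is a theorem of $\mathsf{TT}$ (since the $\mathbf{t}$-standard on the conclusion makes $\mathsf{ST}$ and $\mathsf{TT}$ agree on unfalsifiable conclusions), and dually $\Gamma$ is an antitheorem of $\mathsf{ST}$ iff it is an antitheorem of $\mathsf{SS}$. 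Spelling this out via Facts \ref{fct:ath2} and \ref{fct:th2} gives $\mathbb{ST}^{\star} = \mathbb{SS}^{\star} \cup \mathbb{TT}^{\star}$ on the nose.

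The third identity, $\mathbb{ST}^{\star} = \mathbb{CL}^{\star}$, is immediate from Theorem \ref{thm:cl=st}, which gives $\mathbb{ST} = \mathbb{CL}$ for every BNM scheme, so their theorem-and-antitheorem sets coincide as well. I expect the main obstacle to be the first identity, specifically verifying that no theorem/antitheorem of $\mathbb{SS}^{\star} \cup \mathbb{TT}^{\star}$ can be eliminated by a transitive cut through non-members. The delicate point is the interplay between the antitheorem case (governed by the $\mathbf{ss}$ premise-standard) and the theorem case (governed by the $\mathbf{tt}$ conclusion-standard): one must check that in any candidate cut, the witnessing $\Delta$ cannot simultaneously break both the antitheorem status of $\Gamma$ and the theorem status of $\phi$, so that at least one of the participating inferences remains in $\mathbb{SS}^{\star} \cup \mathbb{TT}^{\star}$. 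Handling the boundary case where $\Delta$ itself could be an antitheorem, and ensuring the variable-freshness arguments of Facts \ref{fct:ath2} and \ref{fct:th2} apply uniformly, will require care but should go through by the same structural reasoning as in Proposition \ref{prop:atth}.
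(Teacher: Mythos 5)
Your proposal is correct, and on the main identity it takes a genuinely different route from the paper. For $T^d(\mathbb{SS}^{\star} \cup \mathbb{TT}^{\star}) \supseteq \mathbb{SS}^{\star} \cup \mathbb{TT}^{\star}$, the paper argues abstractly: by Proposition \ref{prop:atth}, $\mathbb{SS}^{\star} = T^d(\mathbb{SS})$, so idempotence gives $T^d(\mathbb{SS}^{\star}) = \mathbb{SS}^{\star}$, and monotonicity of the interior operator yields $\mathbb{SS}^{\star} = T^d(\mathbb{SS}^{\star}) \subseteq T^d(\mathbb{SS}^{\star} \cup \mathbb{TT}^{\star})$, likewise for $\mathbb{TT}^{\star}$---in effect, a union of two $T^d$-open sets is open. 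You instead unfold Proposition \ref{prp:duaTr} and verify directly that the complement of $\mathbb{SS}^{\star} \cup \mathbb{TT}^{\star}$ is closed under transitivity. That works, and it is in fact more general and more elementary: it shows $\mathbb{L}_1^{\star} \cup \mathbb{L}_2^{\star}$ is a fixed point of $T^d$ for \emph{arbitrary} sets of inferences, with no semantics and no appeal to Proposition \ref{prop:atth} needed. But note that the ``delicate point'' you anticipate is vacuous: membership of $\Gamma \Rightarrow \phi$ in $\mathbb{L}^{\star}$ depends only on $\Gamma$ (antitheorem clause) or only on $\phi$ (theorem clause), so no cut set $\Delta$ can break either status---if $\Gamma \Rightarrow \psi \in \mathbb{L}$ for all $\psi$, then $\Gamma \Rightarrow \delta \in \mathbb{L}^{\star}$ for every $\delta \in \Delta$, and if $\Sigma \Rightarrow \phi \in \mathbb{L}$ for all $\Sigma$, then $\Delta \Rightarrow \phi \in \mathbb{L}^{\star}$ outright; the case analysis closes immediately, with no interplay between the two standards to check. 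Your ``cleaner route'' for the second identity is essentially the paper's own right-to-left argument (antitheorems of $\mathsf{ST}$ are antitheorems of $\mathsf{SS}$ via the shared strict premise standard; theorems of $\mathsf{ST}$ are theorems of $\mathsf{TT}$ via the shared tolerant conclusion standard), though for $\mathbb{SS}^{\star} \cup \mathbb{TT}^{\star} \subseteq \mathbb{ST}^{\star}$ the paper simply applies monotonicity of $T^d$ to $\mathbb{SS}, \mathbb{TT} \subseteq \mathbb{ST}$. One slip to correct: a theorem of $\mathsf{ST}$ is a formula with $v(\phi) \neq 0$ for all valuations (unfalsifiable under the tolerant conclusion standard), not $v(\phi) = 1$ for all valuations; your parenthetical gloss (``unfalsifiable conclusions'') has it right, so only the displayed condition needs fixing. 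The third identity is handled exactly as in the paper, via Theorem \ref{thm:cl=st}.
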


\begin{proof}
    The left-to-right inclusion of the first identity is straightforward since $T^d$ is an interior operator. For the right-to-left direction, note that $T^d(\mathbb{SS}^{\star}) = T^d(T^d(\mathbb{SS})) = T^d(\mathbb{SS}) = \mathbb{SS}^{\star} \subseteq T^d(\mathbb{SS}^{\star} \cup \mathbb{TT}^{\star})$ and $T^d(\mathbb{TT}^{\star}) = T^d(T^d(\mathbb{TT})) = T^d(\mathbb{TT}) = \mathbb{TT}^{\star} \subseteq T^d(\mathbb{SS}^{\star} \cup \mathbb{TT}^{\star})$ given Proposition \ref{prop:atth} and the fact that $T^d$ is an interior operator. So $\mathbb{SS}^{\star} \cup \mathbb{TT}^{\star} \subseteq T^d(\mathbb{SS}^{\star} \cup \mathbb{TT}^{\star})$. Turning to the second identity, for the left-to-right inclusion, we know from Theorem \ref{th:ssunionttinclcl} that $\mathbb{SS}, \mathbb{TT} \subseteq \mathbb{SS} \cup \mathbb{TT} \subset \mathbb{ST}$. Therefore $T^d(\mathbb{SS}), T^d(\mathbb{TT}) \subseteq T^d(\mathbb{ST})$ and hence $\mathbb{SS}^{\star} \cup \mathbb{TT}^{\star}  \subseteq \mathbb{ST}^{\star}$. For the right-to-left direction, Assume that $\Gamma \Rightarrow \phi \in \mathbb{ST}^{\star}$. If $\Gamma$ is an antitheorem, then for all $\mathbf{X}$-valuations $v$, $v(\gamma)\neq 1$ for some $\gamma \in \Gamma$, so $\Gamma \Rightarrow \phi \in \mathbb{SS}^{\star}$. If $\phi$ is a theorem, then for all $\mathbf{X}$-valuations $v$, $v(\phi)\in\{1,\sfrac{1}{2}\}$, so $\Gamma \Rightarrow \phi \in \mathbb{TT}^{\star}$. Thus, in both cases, $\Gamma \Rightarrow \phi \in \mathbb{SS}^{\star} \cup \mathbb{TT}^{\star}$. The last identity is a direct consequence of Theorem \ref{thm:cl=st}.
\end{proof}

We then prove a lemma demonstrating the interdefinability of the inclusion order and the dual transitive closure of intersection, following a similar approach to the proof of Lemma \ref{lem:elem2}. The interdefinability of the inclusion order and union is straightforward to prove and is omitted.

\begin{lemma}\label{lem:elemduan}
    For any $\mathbb{X}^{\star}, \mathbb{Y}^{\star} \in \lbrace \mathbb{SS}^{\star}, \mathbb{TT}^{\star}, \mathbb{TS}^{\star}, \mathbb{SS}^{\star} \cup \mathbb{TT}^{\star}\rbrace$, $\mathbb{X}^{\star} \subseteq \mathbb{Y}^{\star}$ if and only if $T^d(\mathbb{X}^{\star} \cap \mathbb{Y}^{\star}) = \mathbb{X}^{\star}$.
\end{lemma}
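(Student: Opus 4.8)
The plan is to dualize the argument for Lemma \ref{lem:elem2}, replacing the closure operator $T$ by the interior operator $T^d$, union by intersection, and the role of the largest element by that of the smallest. The crucial preliminary observation is that every element of $\lbrace \mathbb{SS}^{\star}, \mathbb{TT}^{\star}, \mathbb{TS}^{\star}, \mathbb{SS}^{\star} \cup \mathbb{TT}^{\star} \rbrace$ is a fixed point of $T^d$. For the three sets $\mathbb{SS}^{\star}, \mathbb{TT}^{\star}, \mathbb{TS}^{\star}$ this follows immediately from Proposition \ref{prop:atth} together with the idempotence of $T^d$ (it being an interior operator): for instance $T^d(\mathbb{SS}^{\star}) = T^d(T^d(\mathbb{SS})) = T^d(\mathbb{SS}) = \mathbb{SS}^{\star}$, and similarly for the other two. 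For the remaining set $\mathbb{SS}^{\star} \cup \mathbb{TT}^{\star}$ idempotence does not apply directly, but Fact \ref{fct:atthcl} already records that $T^d(\mathbb{SS}^{\star} \cup \mathbb{TT}^{\star}) = \mathbb{SS}^{\star} \cup \mathbb{TT}^{\star}$, so it too is a fixed point.

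For the left-to-right direction, I would assume $\mathbb{X}^{\star} \subseteq \mathbb{Y}^{\star}$, whence $\mathbb{X}^{\star} \cap \mathbb{Y}^{\star} = \mathbb{X}^{\star}$. Applying $T^d$ and using that $\mathbb{X}^{\star}$ is a fixed point gives $T^d(\mathbb{X}^{\star} \cap \mathbb{Y}^{\star}) = T^d(\mathbb{X}^{\star}) = \mathbb{X}^{\star}$, as required.

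For the right-to-left direction, I would assume $T^d(\mathbb{X}^{\star} \cap \mathbb{Y}^{\star}) = \mathbb{X}^{\star}$ and invoke the contractivity of the interior operator $T^d$, namely $T^d(\mathbb{X}^{\star} \cap \mathbb{Y}^{\star}) \subseteq \mathbb{X}^{\star} \cap \mathbb{Y}^{\star}$. Chaining inclusions, $\mathbb{X}^{\star} = T^d(\mathbb{X}^{\star} \cap \mathbb{Y}^{\star}) \subseteq \mathbb{X}^{\star} \cap \mathbb{Y}^{\star} \subseteq \mathbb{Y}^{\star}$ yields $\mathbb{X}^{\star} \subseteq \mathbb{Y}^{\star}$.

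The main obstacle---the only place where anything beyond routine lattice bookkeeping is needed---is establishing that $\mathbb{SS}^{\star} \cup \mathbb{TT}^{\star}$ is a fixed point of $T^d$, since this set is not presented as $\mathbb{L}^{\star}$ for a single logic $\mathsf{L}$ and so Proposition \ref{prop:atth} cannot be applied to it directly. This is precisely why Fact \ref{fct:atthcl} is required; once it is in hand, the remainder of the argument is a mechanical dualization of the proof of Lemma \ref{lem:elem2}.
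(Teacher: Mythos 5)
Your proposal is correct and follows essentially the same route as the paper's own proof: both directions rest on the fixed-point property $T^d(\mathbb{X}^{\star}) = \mathbb{X}^{\star}$, secured via Proposition \ref{prop:atth} (with idempotence of $T^d$) for $\mathbb{SS}^{\star}, \mathbb{TT}^{\star}, \mathbb{TS}^{\star}$ and via Fact \ref{fct:atthcl} for $\mathbb{SS}^{\star} \cup \mathbb{TT}^{\star}$, combined with contractivity of the interior operator for the converse. You are merely more explicit than the paper in isolating the union case as the one spot where Proposition \ref{prop:atth} alone does not suffice, which is a fair reading of why the paper cites Fact \ref{fct:atthcl} there.
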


\begin{proof}
    (Left-to-right) Assume  $\mathbb{X}^{\star} \subseteq \mathbb{Y}^{\star}$. By Proposition \ref{prop:atth} and Fact \ref{fct:atthcl}, $\mathbb{X}^{\star} = T^d(\mathbb{X}) = T^d(T^d(\mathbb{X})) = T^d(\mathbb{X}^{\star})$, so $T^d(\mathbb{X}^{\star} \cap \mathbb{Y}^{\star}) = \mathbb{X}^{\star}$.

    (Right-to-left) Assume $T^d(\mathbb{X}^{\star} \cap \mathbb{Y}^{\star}) = \mathbb{X}^{\star}$ and let $\Gamma \Rightarrow \phi \in \mathbb{X}^{\star}$. Then $\Gamma \Rightarrow \phi \in T^d(\mathbb{X}^{\star} \cap \mathbb{Y}^{\star}) \subseteq \mathbb{X}^{\star} \cap \mathbb{Y}^{\star} \subseteq \mathbb{Y}^{\star}$ since $T^d$ is an interior operator.
\end{proof}

With this lemma at hand, we can demonstrate that $\langle \lbrace \mathbb{SS}^{\star}, \mathbb{TT}^{\star}, \mathbb{TS}^{\star}, \mathbb{SS}^{\star} \cup \mathbb{TT}^{\star}\rbrace, \sqcap, \sqcup \rangle$ is a lattice algebra with meet $\sqcap$ defined by $\mathbb{X}^{\star} \sqcap \mathbb{Y}^{\star} \coloneqq T^d(\mathbb{X}^{\star} \cap \mathbb{Y}^{\star})$ and join $\sqcup$ by $\mathbb{X}^{\star} \cup \mathbb{Y}^{\star}$.

\begin{fact}
    Let $\mathfrak{L}^{\star} \coloneqq \langle \lbrace \mathbb{SS}^{\star}, \mathbb{TT}^{\star}, \mathbb{TS}^{\star}, \mathbb{SS}^{\star} \cup \mathbb{TT}^{\star}\rbrace, \sqcap, \sqcup \rangle$, with $\sqcap$ defined for all $\mathbb{X}^{\star}, \mathbb{Y}^{\star} \in \mathfrak{L}^{\star}$ by $\mathbb{X}^{\star} \sqcap \mathbb{Y}^{\star} \coloneqq T^d(\mathbb{X}^{\star} \cap \mathbb{Y}^{\star})$ and $\sqcup$ by $\mathbb{X}^{\star} \sqcup \mathbb{Y}^{\star} \coloneqq \mathbb{X}^{\star} \cup \mathbb{Y}^{\star}$. Then $\mathfrak{L}^{\star}$ is a lattice algebra.
\end{fact}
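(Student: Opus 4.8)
The plan is to mirror the proof of Fact \ref{fct:lat}, exploiting the duality between $T$ and $T^{d}$ recorded in Proposition \ref{prp:duaTr}. As there, I would first verify that $\mathfrak{L}^{\star}$ is an algebra, i.e.\ closed under $\sqcap$ and $\sqcup$, and then check that $\sqcap$ and $\sqcup$ are commutative, idempotent, associative, and satisfy the absorption laws. Throughout I would use repeatedly that each of $\mathbb{SS}^{\star}, \mathbb{TT}^{\star}, \mathbb{TS}^{\star}$ and $\mathbb{SS}^{\star} \cup \mathbb{TT}^{\star}$ is a fixed point of $T^{d}$: for the first three this follows from Proposition \ref{prop:atth} together with the idempotence of $T^{d}$ (e.g.\ $T^{d}(\mathbb{SS}^{\star}) = T^{d}(T^{d}(\mathbb{SS})) = T^{d}(\mathbb{SS}) = \mathbb{SS}^{\star}$), and for the fourth it is exactly Fact \ref{fct:atthcl}.

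For the join $\sqcup = \cup$, closure and the lattice laws are almost immediate. Closure follows from the inclusion relations in Fact \ref{fct:subsath}: since $\mathbb{TS}^{\star} = \emptyset$ is the bottom and $\mathbb{SS}^{\star} \cup \mathbb{TT}^{\star}$ the top of the four sets, every union of two of them is again one of the four. Commutativity, idempotence and associativity of $\sqcup$ are just the corresponding properties of set-theoretic union. The crux lies in closure under the meet $\sqcap = T^{d}(\,\cdot\, \cap \,\cdot\,)$. For comparable elements, Lemma \ref{lem:elemduan} gives that the meet is the smaller element, so these meets stay in $\mathfrak{L}^{\star}$; this also disposes of every meet involving the top $\mathbb{SS}^{\star} \cup \mathbb{TT}^{\star}$ and every meet involving the bottom $\mathbb{TS}^{\star}$. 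The only remaining case is the meet of the incomparable pair, and the key step is to show that $T^{d}(\mathbb{SS}^{\star} \cap \mathbb{TT}^{\star}) = \mathbb{TS}^{\star}$. Here I would argue as follows: since $T^{d}$ is an interior operator, $\mathbb{SS}^{\star} = T^{d}(\mathbb{SS}) \subseteq \mathbb{SS}$ and $\mathbb{TT}^{\star} = T^{d}(\mathbb{TT}) \subseteq \mathbb{TT}$, whence $\mathbb{SS}^{\star} \cap \mathbb{TT}^{\star} \subseteq \mathbb{SS} \cap \mathbb{TT}$; applying the monotone operator $T^{d}$ and invoking Theorem \ref{thm:tssum} gives $T^{d}(\mathbb{SS}^{\star} \cap \mathbb{TT}^{\star}) \subseteq T^{d}(\mathbb{SS} \cap \mathbb{TT}) = \mathbb{TS} = \emptyset$ by Theorem \ref{thm:empty=st}, and since $\mathbb{TS}^{\star} = T^{d}(\mathbb{TS}) = T^{d}(\emptyset) = \emptyset$ this yields the desired equality. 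This computation, small as it is, is the main obstacle, as it is the one place where the semantics genuinely enters rather than pure operator algebra.

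It remains to establish the lattice laws for $\sqcap$. Commutativity is immediate from the commutativity of $\cap$, and idempotence follows from the fixed-point observation above, since $\mathbb{X}^{\star} \sqcap \mathbb{X}^{\star} = T^{d}(\mathbb{X}^{\star}) = \mathbb{X}^{\star}$. For associativity I would dualize the argument in Fact \ref{fct:lat}, using that $T^{d}$ is decreasing, monotone and idempotent, to show that both $(\mathbb{X}^{\star} \sqcap \mathbb{Y}^{\star}) \sqcap \mathbb{Z}^{\star}$ and $\mathbb{X}^{\star} \sqcap (\mathbb{Y}^{\star} \sqcap \mathbb{Z}^{\star})$ collapse to $T^{d}(\mathbb{X}^{\star} \cap \mathbb{Y}^{\star} \cap \mathbb{Z}^{\star})$. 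For the first, the inclusion $\subseteq$ follows from $T^{d}(\mathbb{X}^{\star} \cap \mathbb{Y}^{\star}) \cap \mathbb{Z}^{\star} \subseteq \mathbb{X}^{\star} \cap \mathbb{Y}^{\star} \cap \mathbb{Z}^{\star}$ and monotonicity of $T^{d}$, while $\supseteq$ follows since $T^{d}(\mathbb{X}^{\star} \cap \mathbb{Y}^{\star} \cap \mathbb{Z}^{\star})$ is contained both in $\mathbb{Z}^{\star}$ and, by monotonicity, in $T^{d}(\mathbb{X}^{\star} \cap \mathbb{Y}^{\star})$, so applying $T^{d}$ and using idempotence gives the claim; the second association is symmetric. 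Finally, for absorption, $\mathbb{X}^{\star} \sqcup (\mathbb{X}^{\star} \sqcap \mathbb{Y}^{\star}) = \mathbb{X}^{\star} \cup T^{d}(\mathbb{X}^{\star} \cap \mathbb{Y}^{\star}) = \mathbb{X}^{\star}$ because $T^{d}(\mathbb{X}^{\star} \cap \mathbb{Y}^{\star}) \subseteq \mathbb{X}^{\star}$, and dually $\mathbb{X}^{\star} \sqcap (\mathbb{X}^{\star} \sqcup \mathbb{Y}^{\star}) = T^{d}(\mathbb{X}^{\star} \cap (\mathbb{X}^{\star} \cup \mathbb{Y}^{\star})) = T^{d}(\mathbb{X}^{\star}) = \mathbb{X}^{\star}$.
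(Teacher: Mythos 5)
Your proposal is correct and takes essentially the same route as the paper: the one substantive point is the meet of the incomparable pair, $T^d(\mathbb{SS}^{\star} \cap \mathbb{TT}^{\star}) = \mathbb{TS}^{\star}$, which both you and the paper extract from Theorem \ref{thm:tssum} plus the interior-operator properties of $T^d$ (you via monotonicity applied to $\mathbb{SS}^{\star}\cap\mathbb{TT}^{\star}\subseteq\mathbb{SS}\cap\mathbb{TT}$, the paper via idempotence), with comparable pairs handled through Fact \ref{fct:subsath} and Lemma \ref{lem:elemduan}. The lattice identities, which the paper dismisses as ``dual to Fact \ref{fct:lat},'' you spell out explicitly and correctly.
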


\begin{proof}
    It follows from Proposition \ref{prop:atth} that all the elements of $\mathfrak{L}^{\star}$ are closed under $T^d$. By Theorem \ref{thm:tssum}, $T^d(\mathbb{SS} \cap \mathbb{TT}) = \mathbb{TS}$, so $T^d(T^d(\mathbb{SS} \cap \mathbb{TT})) = T^d(\mathbb{TS})$. But $T^d(T^d(\mathbb{SS} \cap \mathbb{TT})) = T^d(\mathbb{SS} \cap \mathbb{TT})$ since $T^d$ is an interior operator and $T^d(\mathbb{TS}) = \mathbb{TS}^{\star}$ by Proposition \ref{prop:atth}. Thus, $T^d(\mathbb{SS}^{\star} \cap \mathbb{TT}^{\star}) = \mathbb{TS}^{\star}$. Any other pair is comparable by Fact \ref{fct:subsath}, so the dual transitive closure of their intersection corresponds to the least of the two. Hence, the carrier set of $\mathfrak{L}^{\star}$ is closed under $\sqcap$. Evidently, the carrier set of $\mathfrak{L}^{\star}$ is also closed under $\sqcup$ since $\sqcup = \cup$. $\mathfrak{L}^{\star}$ is therefore an algebra. The remainder of the proof is dual to Fact \ref{fct:lat}.
\end{proof}

\begin{corollary}
   $\mathfrak{L}^{\star} \coloneqq \langle \lbrace \mathbb{TS}^{\star}, \mathbb{SS}^{\star}, \mathbb{TT}^{\star}, \mathbb{SS}^{\star} \cup \mathbb{TT}^{\star}\rbrace, \subseteq \rangle$ is a lattice with infimum $\textit{inf}\lbrace\X^{\star}, \Y^{\star}\rbrace = T^d(\X^{\star} \cap \Y^{\star})$ and supremum $\textit{sup}\lbrace\X^{\star}, \Y^{\star}\rbrace = \X^{\star} \cup \Y^{\star}$.
\end{corollary}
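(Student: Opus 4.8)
The plan is to derive the corollary directly from the lattice-algebra structure established in the immediately preceding Fact, exactly as the corollary for $\mathfrak{L}$ was derived from Fact \ref{fct:lat}. First I would invoke the standard interdefinability of lattice algebras and lattice orders: any algebra $\langle A, \sqcap, \sqcup \rangle$ satisfying commutativity, idempotence, associativity, and the absorption laws induces a partial order defined by $\mathbb{X}^{\star} \sqsubseteq \mathbb{Y}^{\star}$ iff $\mathbb{X}^{\star} \sqcap \mathbb{Y}^{\star} = \mathbb{X}^{\star}$ (equivalently $\mathbb{X}^{\star} \sqcup \mathbb{Y}^{\star} = \mathbb{Y}^{\star}$), and with respect to this order $\sqcap$ computes infima and $\sqcup$ computes suprema. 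Since the previous Fact shows that $\langle \lbrace \mathbb{SS}^{\star}, \mathbb{TT}^{\star}, \mathbb{TS}^{\star}, \mathbb{SS}^{\star} \cup \mathbb{TT}^{\star}\rbrace, \sqcap, \sqcup \rangle$, with $\sqcap = T^d(\,\cdot\, \cap \,\cdot\,)$ and $\sqcup = \cup$, is a lattice algebra, it carries such an induced order and every pair has an infimum and a supremum computed by $\sqcap$ and $\sqcup$ respectively.

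The only point requiring care is that this induced algebraic order coincides with set inclusion $\subseteq$ on the carrier set. This is precisely what Lemma \ref{lem:elemduan} delivers: for all $\mathbb{X}^{\star}, \mathbb{Y}^{\star}$ in the carrier, $\mathbb{X}^{\star} \subseteq \mathbb{Y}^{\star}$ if and only if $T^d(\mathbb{X}^{\star} \cap \mathbb{Y}^{\star}) = \mathbb{X}^{\star}$, i.e.\ if and only if $\mathbb{X}^{\star} \sqcap \mathbb{Y}^{\star} = \mathbb{X}^{\star}$, which is exactly the algebraic order relation. Hence $\langle \lbrace \mathbb{TS}^{\star}, \mathbb{SS}^{\star}, \mathbb{TT}^{\star}, \mathbb{SS}^{\star} \cup \mathbb{TT}^{\star}\rbrace, \subseteq \rangle$ is the lattice order underlying the lattice algebra, and reading off the two operations gives $\textit{inf}\lbrace \mathbb{X}^{\star}, \mathbb{Y}^{\star}\rbrace = \mathbb{X}^{\star} \sqcap \mathbb{Y}^{\star} = T^d(\mathbb{X}^{\star} \cap \mathbb{Y}^{\star})$ and $\textit{sup}\lbrace \mathbb{X}^{\star}, \mathbb{Y}^{\star}\rbrace = \mathbb{X}^{\star} \sqcup \mathbb{Y}^{\star} = \mathbb{X}^{\star} \cup \mathbb{Y}^{\star}$, as claimed.

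I expect no genuine obstacle here, since all the substantive work---verifying the lattice-algebra axioms and matching the algebraic order with inclusion---has already been carried out in the preceding Fact and in Lemma \ref{lem:elemduan}, so the corollary is a purely formal consequence mirroring the proof of the corollary for $\mathfrak{L}$. The one subtlety worth flagging is that, because the meet $\sqcap$ is here the composite operation $T^d(\,\cdot\, \cap \,\cdot\,)$ rather than plain intersection, the coincidence of the algebraic order with $\subseteq$ genuinely relies on Lemma \ref{lem:elemduan} and cannot be read off from $\sqcup = \cup$ alone; this is the only place where the interior-operator behaviour of $T^d$ does real work.
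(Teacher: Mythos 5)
Your proposal is correct and follows exactly the route the paper intends: the paper dismisses this corollary as ``Straightforward,'' relying (as in the corollary for $\mathfrak{L}$) on the standard interdefinability of lattice algebras and lattice orders, with Lemma \ref{lem:elemduan} supplying the identification of the induced algebraic order with $\subseteq$. Your write-up merely makes explicit the details the paper leaves implicit, including the one genuinely non-trivial point---that the coincidence of the algebraic order with inclusion rests on Lemma \ref{lem:elemduan} because the meet is $T^d(\,\cdot\,\cap\,\cdot\,)$ rather than plain intersection---so there is nothing to correct.
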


\begin{proof}
    Straightforward.
\end{proof}

{\footnotesize 
\begin{figure}[ht]
\centering
 \begin{tikzpicture}
\draw (0,4) node (st) {$\mathbb{SS}^{\star} \cup \mathbb{TT}^{\star}$};
\draw (2,2) node (ss) {$\mathbb{TT}^{\star}$};
\draw (-2,2) node (tt) {$\mathbb{SS}^{\star}$};
\draw (0,0) node (ts) {$\mathbb{TS}^{\star}$};
 \draw[-] (ts) -- (ss);
 \draw[-] (ts) -- (tt);
 \draw[-] (ss) -- (st);
 \draw[-] (tt) -- (st);
\end{tikzpicture}
\caption{Inclusion order over $\mathbb{SS}^{\star} \cup \mathbb{TT}^{\star}, \mathbb{SS}^{\star}, \mathbb{TT}^{\star}$ and $\mathbb{TS}^{\star}$}\label{fig:thath}
\end{figure}
}

Dually to $\mathfrak{L}$, the structure $\mathfrak{L}^{\star}$ is a sublattice of the interior system $\mathfrak{T}^{d} = \lbrace \mathbb{L} : \mathbb{L} = T^d(\mathbb{L}) \rbrace$ ordered by $\subseteq$. $\langle \mathfrak{T}^d, \subseteq \rangle$ is a complete lattice of sets of inferences open under dual transitivity.\footnote{An interior system $\mathfrak{I}$ on a set $A$ is a collection of subsets of $\mathcal{P}(A)$ such that $\emptyset \in \mathfrak{I}$, and if $\mathfrak{B} \subseteq \mathfrak{C}$ for $\mathfrak{B} \neq \emptyset$, then $\bigcup \mathfrak{B} \in \mathfrak{I}$. Every interior operator $I$ naturally generates an interior system $\mathfrak{I} = \lbrace X \subseteq A : I(X) = X \rbrace$ and a partially ordered set $\langle \mathfrak{I}, \subseteq \rangle$. This partially ordered set can be proved to be a complete lattice with operations $\bigvee F_i = \bigcup F_i$ and $\bigwedge F_i = I(\bigcap F_i)$ for any $\lbrace F_i : i \in I \rbrace \subseteq \mathfrak{I}$. See \cite[p.\ 42]{font2016}.} By Proposition \ref{prop:atth}, $\mathfrak{T}^{d}$ can alternatively be viewed as a complete lattice of sets of theorems and antitheorems. For any two elements of $\mathfrak{T}$, their join is the least set of theorems and antitheorems extending both, and their meet is the greatest set of theorems and antitheorems extended by both. In consequence, the fact that $\mathfrak{L}^{\star}$ is a sublattice of $\mathfrak{T}^d$ implies that $\mathbb{CL}^{\star}$ is the least set of theorems and antitheorems extending both $\mathbb{SS}^{\star}$ and $\mathbb{TT}^{\star}$, and $\mathbb{SS} \cap \mathbb{TT}$ the greatest  set of theorems and antitheorems extended by both.

Figure \ref{fig:thath} represents the lattice of theorems and antitheorems of the $\mathsf{SS}, \mathsf{TT}, \mathsf{ST}$ and $\mathsf{TS}$ logics. It is easy to see that it is isomorphic to the lattice represented in Figure \ref{fig:semlatp}. By Fact \ref{fct:atthcl}, the top of the lattice, $\mathbb{SS}^{\star} \cup \mathbb{TT}^{\star}$, corresponds to the set of antitheorems and theorems of classical logic. Conversely, the bottom of the lattice, $\mathbb{TS}^{\star}$, aligns with the set of antitheorems and theorems of $\mathsf{SS} \cap \mathsf{TT}$. The duality of the transitive closure operator and the dual transitive closure operator reverberates on the meet and join of the respective lattices. In the lattice of valid inferences, as we have seen, $\mathbb{SS} \cup \mathbb{TT}$ does not coincide with $\mathbb{ST}$, the set of classically valid inferences. Dually, in the lattice of antitheorems and theorems, $\mathbb{SS}^{\star} \cap \mathbb{TT}^{\star}$ does not coincide with $\mathbb{TS}$, the set of antitheorems and theorems of $\mathsf{SS} \cap \mathsf{TT}$, for $(p \wedge \neg p) \Rightarrow (q \lor \neg q) \in \mathbb{SS}^{\star} \cap \mathbb{TT}^{\star}$. In both cases, specific operations must be applied to obtain the desired set: for the union, the transitive closure operator is used; for the intersection, the dual transitive closure operator is applied. 



\section{Conclusion}\label{sect:conclusion}

In this paper, we presented several results concerning the relation between $\bf{tt}$- and $\bf{ss}$- logics and their corresponding $\bf{st}$- and $\bf{ts}$-logics over three-valued BNM schemes. 

On the one hand, we showed that $\mathbb{SS} \cup \mathbb{TT} \subsetneq \mathbb{ST}=\mathbb{CL}$, for any BNM scheme. We also introduced the transitive closure operator $T$ and we proved that $T(\mathbb{SS} \cup \mathbb{TT}) = \mathbb{ST}$ (see theorems \ref{th:ssunionttinclcl}, \ref{th:st0}). On the other hand, we proved that $\mathbb{SS} \cap \mathbb{TT} \varsupsetneq \mathbb{TS}$, for any BNM scheme. We introduced the dual transitive closure operator $T^d$, and proved that $T^{d}(\mathbb{SS} \cap \mathbb{TT}) = \mathbb{TS}$ (see Theorem  \ref{thm:tssum}). 

We generalized and extended results from \cite{da2023three}, highlighting notable relationships between $\bf{ss}$- and $\bf{tt}$-logics, concerning properties such as paraconsistency and paracompleteness.

We concluded with observations on the abstract relationships between the transitive closure and its dual. It was noted that, just as the operator $T$ closes a set of inferences under transitivity, its dual $T^d$ eliminates reflexivity in all non-trivial cases, preserving only theorems and antitheorems. Building on this, we demonstrated how the sets $\mathbb{SS}$, $\mathbb{TT}$, $\mathbb{CL}$, and $\mathbb{SS} \cap \mathbb{TT}$ form a lattice, with $\mathbb{CL}$ at the top, extending both $\mathbb{SS}$ and $\mathbb{TT}$, and $\mathbb{SS} \cap \mathbb{TT}$ at the bottom, extended by both. We concluded the previous section by observing that the respective sets of theorems and antitheorems of each of these logics notably followed the same lattice structure.

However, several open questions remain. All the results in this paper are specific to BNM three-valued schemes. As shown by \cite{da2023three}, other types of schemes---such as the Boolean Normal Truth Collapsible and Falsity Collapsible ones---are also capable of yielding classical logic under an $\mathbf{st}$ standard. Yet, logics defined over these schemes exhibit very different properties from those studied here. For example, logics over Boolean Normal Collapsible schemes with a $\mathbf{tt}$ or $\mathbf{ss}$ standard are coextensive with classical logic, and hence they are neither paraconsistent nor paracomplete. Furthermore, $\mathbf{ts}$-logics over these schemes are not necessarily empty. Additionally, the generalization of the results in this article to multiple conclusions and metainferential logics remains to be addressed. We are working on these questions and will address them in a subsequent article.

\bibliographystyle{apacite}
\bibliography{references}

\end{document}